\newsavebox{\savepar}
\DeclareMathAlphabet{\pazocal}{OMS}{zplm}{m}{n}
  \newtheorem{theorem}{Theorem}[section]
  \theoremstyle{definition}
  \theoremstyle{remark}
  \newtheorem{remark}[theorem]{Remark}
\newtheorem{lemma}[theorem]{Lemma}
\newtheorem{proposition}[theorem]{Proposition}
\theoremstyle{definition}
\theoremstyle{remark}
\begin{document}

\newcommand{\namelistlabel}[1]{\mbox{#1}\hfil}
\newenvironment{namelist}[1]{%
\begin{list}{}
{
\let\makelabel\namelistlabel
\settowidth{\labelwidth}{#1}
\setlength{\leftmargin}{1.1\labelwidth}
}
}{%
\end{list}}

\newcommand{\inp}[2]{\langle {#1} ,\,{#2} \rangle}
\newcommand{\vspan}[1]{{{\rm\,span}\{ #1 \}}}

\newcommand{\R} {{\mathbb{R}}}

\newcommand{\B} {{\mathbb{B}}}
\newcommand{\C} {{\mathbb{C}}}
\newcommand{\N} {{\mathbb{N}}}
\newcommand{\Q} {{\mathbb{Q}}}
\newcommand{\Z} {{\mathbb{Z}}}

\newcommand{\BB} {{\mathcal{B}}}

\title{ Discrete Modified Projection Method for   Urysohn Integral 
Equations with  Smooth Kernels }
\author{  Rekha P.  KULKARNI \thanks{Department of Mathematics, I.I.T. Bombay, 
Powai, Mumbai 400076, India,  rpk@math.iitb.ac.in,   } and Gobinda RAKSHIT \thanks{
gobindarakshit@math.iitb.ac.in} 
}

\date {}
\maketitle

\begin{abstract}

Approximate solutions of linear and nonlinear integral equations using methods related to 
an interpolatory projection involve many integrals which need to be evaluated using a numerical quadrature formula. 
In this paper, we consider discrete versions of the modified projection method and of the iterated modified projection method
for solution of a Urysohn integral equation with a smooth kernel. For $r \geq 1,$ a space of piecewise polynomials of 
degree $\leq r - 1$ with respect to an uniform partition is chosen to be the approximating space and the projection 
is chosen to be the interpolatory projection at $r$ Gauss points. 
The orders of convergence which we 
obtain for these discrete versions indicate the choice of numerical quadrature which preserves 
the orders of convergence. 
 Numerical results are given for a specific example.
\end{abstract}

\bigskip\noindent
Key Words : Urysohn integral operator,   Interpolatory projection,  Gauss points, Nystr\"{o}m Approximation

\smallskip
\noindent
AMS  subject classification : 45G10, 65J15, 65R20

\newpage
\setcounter{equation}{0}
\section {Introduction}

Let $X = L^\infty [a, b]$, and 
consider a Urysohn integral operator 
\begin{equation}\label{eq:1.1}
\mathcal{K} (x)(s)  = \int_a^b \kappa (s, t, x (t))  d t, \;\;\; s \in [a, b], \; x \in X,
\end{equation}
where $\kappa (s, t, u)$ is a continuous real valued function
defined on 
$$ \Omega = [a, b] \times [a, b] \times \R.$$
Then  $\mathcal{K} $ is a compact operator from $L^\infty [a, b]$ to $C [a, b].$ 
Assume that for $ f \in C[a, b],$
\begin{equation}\label{eq:1.2}
x - \mathcal{K} (x) = f
\end{equation}
has a unique solution $\varphi.$

We are interested in approximate solutions of the above equation. We consider projection methods associated 
with a sequence of interpolatory projections converging to the Identity operator pointwise.

For $r \geq 1,$ let $X_n$ denote the space of piecewise polynomials of degree $\leq r - 1$ with respect to 
a uniform partition of $[a, b]$ with $n $ subintervals. Let $\displaystyle {h = \frac {b - a}{n}}$ denote the 
length of each subinterval of the above partition. Let $Q_n: C [a, b] \rightarrow X_n$ denote
the interpolation operator  at $r$ Gauss points.  Then in the collocation method, (\ref{eq:1.2}) is approximated by
\begin{equation}\label{eq:1.3}
\varphi_n^C - Q_n \mathcal{K} (\phi_n^C) = Q_n f.
\end{equation}
The iterated collocation solution is defined as
\begin{equation}\label{eq:1.4}
\varphi_n^S =  \mathcal{K} (\phi_n^C) + f.
\end{equation}
In Grammont et al \cite{Gram3}  the following modified projection method is investigated:
\begin{equation}\label{eq:1.5}
\varphi_n^M - \mathcal{K}_n^M (\phi_n^M) = f,
\end{equation}
where
\begin{equation}\label{eq:1.6}
\mathcal{K}_n^M (x) = Q_n \mathcal{K} (x) + \mathcal{K} (Q_n x) - Q_n \mathcal{K} (Q_n x).
\end{equation}
It is a generalization of the modified projection method in the linear case, which was proposed in Kulkarni \cite {Kul1}.
The iterated monified projection solution is defined as
\begin{equation}\label{eq:1.7}
\tilde {\varphi}_n^M = \mathcal{K} (\phi_n^M) + f.
\end{equation}
If $\displaystyle { \frac {\partial \kappa} {\partial u} \in C^r (\Omega)}$ and $ f \in C^r [a, b],$ then
the following orders of convergence can be obtained from  Atkinson-Potra \cite{AtkP1}:
\begin{equation}\label{eq:1.8}
  \| \varphi - \varphi_n^C \|_\infty= O (h^r), \;\;\; \| \varphi - \varphi_n^S \|_\infty= O (h^{2 r}).
\end{equation}
The following estimates  are obtained in Grammont et al \cite{Gram3}:
Let $ f \in C^{ 2 r} [a, b].$

If $\displaystyle { \frac {\partial \kappa} {\partial u} \in C^{2 r} (\Omega)}$ and  then
\begin{equation}\label{eq:1.9}
\| \varphi - \varphi_n^M \|_\infty= O (h^{3r}), 
\end{equation}
whereas if $\displaystyle {\frac {\partial \kappa} {\partial u} \in C^{3 r} (\Omega),}$
then
\begin{equation}\label{eq:1.10}
 \| \varphi - \tilde{\varphi}_n^M \|_\infty= O (h^{4 r}),
\end{equation}
In practice, it is necessary to replace the integral in the definition of $ \mathcal{K}$ by a numerical quadrature formula, giving rise to 
a discrete version of the above methods. The discrete version of the iterated collocation method is considered in Atkinson-Flores \cite {AtkF}.  Our aim is to  investigate a choice of a numerical quadrature rule which preserves 
the above orders of convergence in the discrete versions of the modified projection and the iterated modified projection methods. 

We choose a composite numerical quadrature formula with a degree of precision $d$ and with respect to a uniform partition of
$[a, b]$ with $m$ subintervals. Let $\tilde{h}$ denote the length of each subinterval of this partition. The discrete modified projection solution and the iterated discrete modified projection solution are denoted repectively by $z_n^M$ and $\tilde{z}_n^M.$ 
We prove the following orders of convergence.
Let $\displaystyle {\kappa \in C^d (\Omega),\; \mbox { and } \;  f \in C^d [a, b].}$ 

If 
$ \displaystyle {\frac {\partial^2 \kappa } {\partial u^2} \in C^{2 r } (\Omega ),}$ then 
\begin{equation}\label{eq:1.11}
\| \varphi - z_n^M \|_\infty= O \left ( \max \left \{ \tilde{h}^d, h^{ 3 r} \right \} \right ),
\end{equation}
whereas if
$ \displaystyle {\frac {\partial^2 \kappa } {\partial u^2} \in C^{3 r } (\Omega ),}$ then 
\begin{equation}\label{eq:1.11}
\| \varphi - \tilde{z}_n^M \|_\infty
= O \left ( h^r \max \left \{ \tilde{h}^d, h^{ 3 r} \right \} \right ).
\end{equation}
Thus, the orders of convergence in (\ref{eq:1.9}) and (\ref{eq:1.10}) are preserved provided the numerical quadrature rule is so chosen that
$\tilde{h}^d = h^{ 3 r}.$

In Chen et al \cite {Chen} discrete versions of the modified projection and the iterated modified projection methods for solutions of linear integral equations  are
considered. They consider a slightly restrictive case where the same uniform partition of $[a, b]$ is
considered to define a composite numerical quadrature formula and an interpolatory projection, that is $\tilde {h} = h.$  We allow these partitions to be different. More precisely, we choose $ m = n p, \; p \in \N,$ instead of $ m = n.$

 The emphasis of this paper is on the solution of Urysohn integral equations and we include 
the results about the linear case for the sake of completeness and for their use to treat the nonlinear case.

The paper has been arranged in the following way. In Section 2.1, we describe the Nystr\"{o}m approximation of $ \mathcal{K}$ obtained by replacing the
integral by a numerical quadrature. In Section 2.2, this approximation is used to define 
 the discrete versions of projection methods  (\ref{eq:1.3})-(\ref{eq:1.7}).
Section 3 is devoted to approximate solutions of linear integral equations using the discrete versions of  the modified projection and the iterated modified projection methods. The order of convergence for the discrete modified projection solution of a Urysohn integral equation 
is obtained in Section 4, whereas the discrete iterated modified projection method is investigated in Section 5. Numerical results are given in Section 6.


\setcounter{equation}{0}
\section{Methods of Approximation}

\subsection {Nystr\"{o}m Approximation}
 
Assume that the kernel $\kappa$ of $ \mathcal{K}$  defined by (\ref{eq:1.1}) is such that 
\begin{equation}\nonumber
\frac {\partial^2 \kappa} {\partial u^2} \in C (\Omega).
\end{equation}
It is also assumed that $1$ is not an eigenvalue of the compact linear operator $\mathcal{K}' (\varphi)$ so that 
  $(I - \mathcal{K}' (\varphi))^{-1}: C[a, b] \rightarrow C[a, b]$ is a bounded linear operator.

Let $m \in \mathbb{N},$ and consider the following uniform partition of $[a, b]:$
\begin{equation}\label{eq:2.2}
a = s_0 < s_1 < \cdots < s_m = b.
\end{equation}
Let 
\begin{equation}\nonumber
 \tilde h = s_{k} - s_{k-1} = \frac {b - a} {m}, \; k = 1, \ldots, m.
\end{equation}
Consider a basic quadrature rule
  \begin{equation}\label{eq:2.4}
  \int_0^1 f (t) d t \approx \sum_{i=1}^\rho w_i f (\mu_i)
  \end{equation}
  which has a degree of precision $ 2 r - 1$ or higher.
  
Let
\begin{equation}\label{eq:2.5}
\zeta_i^j  = s_{j-1} +   \mu_i \;  \tilde h,  \;\;\; i = 1, \ldots, \rho, \;\;\; j = 1, \ldots, m.
\end{equation}
A composite integration rule with respect to the partition  (\ref{eq:2.2}) 
is then defined as
\begin{eqnarray}\label{eq:2.6}
 \int_a^b f (t) d t &=& \sum_{j=1}^m \int_{s_{j-1}}^{s_j} f (t) d t 
  \approx  \tilde h \sum_{j=1}^m  \sum_{i=1}^\rho w_i \; f (\zeta_i^j ).
\end{eqnarray}
The error in the numerical quadrature  is assumed to be of the following form: 

There is an integer  $ d \geq 2r$ such that    if $f \in C^{d} [a, b],$
then
\begin{equation}\label{eq:2.7}
\left | \int_a^b f (t) d t 
 - \tilde h \sum_{j=1}^m  \sum_{i=1}^\rho w_i \; f (\zeta_i^j ) \right | \leq C_1 \|f^{(d)} \|_\infty\tilde {h}^{d},
\end{equation}
where  $f^{(d)} $ denotes $d$ th derivative of $f$ and $C_1$ is a constant independent of $\tilde{h}.$

If  the basic quadrature rule in (\ref{eq:2.4})  is chosen to be the Gaussian quadrature with $\rho = r$
or the Newton-Cotes quadrature with  $\rho = 2 r,$ then (\ref{eq:2.7}) is satisfied with $d = 2 r.$

 We replace the integral in (\ref{eq:1.1}) by the numerical quadrature formula (\ref{eq:2.6}) and define
the Nystr\"{o}m operator as
\begin{equation}\label{eq:2.8}
\mathcal{K}_m (x) (s)  =  \tilde {h}  \sum_{j=1}^m  \sum_{i=1}^\rho w_i \;  \kappa \left (s,  \zeta_i^j, x \left (\zeta_i^j \right ) \right).
\end{equation}
Then $\mathcal{K}_m$ has the following properties.
\begin{enumerate}
\item $\mathcal{K}_m: C[a, b] \rightarrow C[a, b]$ are completely continuous operators.

\item $\mathcal{K}_m$ is a colloctively compact family: 
$\mathcal{B} \subset C([a, b])$ bounded implies that the set
$\{ \mathcal{K}_m (\mathcal{B}): m \geq 1 \}$ is relatively compact in $C [a, b].$

\item At each $x \in C [a, b],$ $\mathcal{K}_m (x) \rightarrow \mathcal{K} (x)$ as $ m \rightarrow \infty.$

\item   $\{ \mathcal{K}_m  \} $ is an equicontinuous family at each $ x \in C[a, b].$

\item  For $m \geq 1,$ $ \mathcal{K}_m $
are twice Fr\'echet  differentiable: 
\begin{equation}\label{eq:2.9}
\mathcal {K}_m'  (x) v (s)  =  \tilde {h}  \sum_{j=1}^m  \sum_{i=1}^\rho  w_i \;  \frac {\partial \kappa } {\partial u} (s,  \zeta_i^j, x (\zeta_i^j)) v (\zeta_i^j), \;\;\; s \in [a, b].
\end{equation}
and
\begin{equation}\label{eq:2.10}
\mathcal {K}_m''  (x) (v_1, v_2) (s)  =  \tilde {h}  \sum_{j=1}^m  \sum_{i=1}^\rho  w_i \;  \frac {\partial^2 \kappa } {\partial u^2} (s,  \zeta_i^j, x (\zeta_i^j)) v_1 (\zeta_i^j) \; v_2 (\zeta_i^j), \;\;\; s \in [a, b].
\end{equation}

\end{enumerate}
Let
$\kappa \in C^{d} (\Omega) \; \mbox {and} \; f \in C^{d} [a, b].$ 
Then $\mathcal{K} $ is a compact operator from $L^\infty [a, b]$ to $C^{d} [a, b]$ 
and the exact solution $\varphi$ of (\ref{eq:1.2}) belongs to $C^{d} [a, b].$ 
From (\ref{eq:2.7}) we conclude that
\begin{eqnarray}\label{eq:2.11}
\|\mathcal{K} (\varphi)  - \mathcal{K}_m (\varphi ) \|_\infty  = O \left (\tilde{h}^{d} \right).
\end{eqnarray}
In the Nystr\"{o}m method,  (\ref{eq:1.2}) is approximated by
\begin{equation}\label{eq:2.12}
x_m - \mathcal{K}_m (x_m) = f.
\end{equation}
We quote the following result from Atkinson \cite {Atk1}.

Fix $\delta > 0$ and define
 \begin{equation}\label{eq:2.13}
B (\varphi, \delta ) = \{ x: \|x - \varphi \|_\infty \leq \delta \}.
\end{equation} 

There exists a positive integer $m_0$ such that for $ m \geq m_0,$ the  equation (\ref{eq:2.12})  has a unique solution $\varphi_m$ in $B (\varphi, \delta )$
and
\begin{eqnarray}\label{eq:2.14}
\|\varphi - \varphi_m \|_\infty  = O \left (\tilde{h}^{d} \right).
\end{eqnarray}

\newpage

\subsection{Discrete Projection Methods}

We first define  the interpolatory projection at $r$ Gauss points.

Let $ n \in \mathbb{N}$ and consider the following uniform partition of $[a, b]:$
\begin{equation}\label{eq:2.15}
a = t_0 < t_1 < \cdots < t_n = b.
\end{equation}
Let
\begin{equation}\nonumber
h =  t_{k} - t_{k-1} =  \frac {b - a} {n}, \; \;\; k = 1, \ldots, n.
\end{equation}
 Let $ r $ be a positive integer 
and let 
 \begin{equation}\nonumber
  \mathcal{X}_n = \left \{  g \in L^\infty [a, b]: g |_{[t_{k-1}, t_k]} \; \mbox{is a polynomial of degree} \; \leq r - 1, \; k = 1, \ldots, n
  \right \}.
  \end{equation}
  Let
   \begin{equation}\label{eq:2.17}
   0 < q_1 < \cdots < q_r < 1
   \end{equation}
  denote the Gauss-Legendre zeros of order $r$ in $[0, 1].$ 
  The  collocation nodes are chosen as follows:
   \begin{equation}\label{eq:2.18}
  \tau_i^k  = t_{k-1} + q_i \; h, \;\;\; i = 1, \ldots, r, \;\;\; k = 1, \ldots, n.
  \end{equation}
  Define the interpolation operator $Q_n: C[a, b] \rightarrow \mathcal{X}_n$ as
   \begin{equation}\label{eq:2.19}
   (Q_n x) (\tau_i^k) = x (\tau_i^k), \;\;\; i = 1, \ldots, r, \;\;\; k = 1, \ldots, n.
   \end{equation}
Using the Hahn-Banach extension theorem, as in Atkinson et al \cite {AtkG}, $Q_n$ can be extended to $L^\infty [a, b].$  
Note that  for $x \in C [a, b],$ $Q_n x \rightarrow x$ as $ n \rightarrow \infty.$ 
As a consequence, 
\begin{equation}\label{eq:2.20}
\sup_{n} \|Q_n |_{C[a, b]}\| \leq q.
\end{equation}
Let 
$$ p \in \N \; \mbox {and} \; m = p n.$$ 
Replacing the operator $\mathcal{K}$ by the Nystr\"{o}m operator $\mathcal{K}_m$ from (\ref{eq:2.8}) in (\ref{eq:1.3})-(\ref{eq:1.7}) we obtain 
discrete versions of various projection methods as given below.

\noindent
    {Discrete Collocation method:}
\begin{equation}\label{eq:2.21}
z_n^C - Q_n \mathcal{K}_m (z_n^C) = Q_n f.
\end{equation}
Discrete Iterated  Collocation method:
\begin{equation}\label{eq:2.22}
 {z}_n^S =  \mathcal{K}_m ( {z}_n^C) +  f.
\end{equation}
 The discrete modified projection operator is defined as 
\begin{equation}\label{eq:2.23}
\tilde{\mathcal{K}}_n^M (x) = Q_n {\mathcal{K}}_m (x) + {\mathcal{K}}_m (Q_n x) - Q_n  {\mathcal{K}}_m (Q_n x).
\end{equation}
Discrete Modified Projection Method: We later show that for $n$ and $m$ big enough, 
\begin{equation}\label{eq:2.24}
 x_n -  \tilde{\mathcal{K}}_n^M (x_n) =  f
\end{equation}
has a unique solution ${z}_n^M$ in a neighbourhood of the exact solution $\varphi.$

Discrete Iterated  Modified Projection Method:
\begin{equation}\label{eq:2.25}
 \tilde{z}_n^M =  {\mathcal{K}}_m ( {z}_n^M) +  f.
\end{equation}
Our aim is to show that $z_n^M \rightarrow \varphi,$ $\tilde{z}_n^M \rightarrow \varphi$ and obtain their orders of convergence. 
We first consider the case of a linear  integral operator. The results proved for the linear integral equation are needed in the case of the Urysohn integral equation.


\setcounter{equation}{0}
\section{Linear Integral Equations}

Let   $\kappa (\cdot, \cdot ) \in C ([a, b] \times  [a, b]).$
Then the integral operator
\begin{equation}\nonumber
(\mathcal{K}x)(s)  = \int_a^b \kappa (s, t) x (t) d t, \;\;\; s \in [a, b]
\end{equation}
is a compact linear operator from $L^\infty [a, b] $ to $C[a, b].$ 
Assume that $1$ is in the 
resolvent set of $\mathcal{K}.$ Then for $ f \in C[a, b],$
the following Fredholm integral equation
\begin{equation}\label{eq:3.2}
x - \mathcal{K} x = f
\end{equation}
has a unique solution in $C[a, b]$, say $\varphi.$

In this special  case, the Nystr\"{o}m operator defined in (\ref{eq:2.8}) is given by
\begin{eqnarray}\label{eq:3.3}
(\mathcal{K}_m x)(s) &= &\tilde h \sum_{j=1}^m \sum_{i=1}^\rho w_i \; \kappa \left (s, \zeta_i^j \right ) \; x \left (\zeta_i^j \right ), \;\;\; s \in [a, b], \;\;\; x \in C [a, b].
\end{eqnarray}
Note that for $x \in C[a, b],$ $\mathcal{K}_m x$ converges to $\mathcal{K}x $ and $\{\mathcal{K}_m\}$ is a collectively compact 
family. Hence for all $m$ big enough, 
 $1$ is in the resolvent set of $\mathcal{K}_m$ and the following Nystr\"{o}m approximation 
\begin{equation}\nonumber
x_m - \mathcal{K}_m x_m = f
\end{equation}
has a unique solution, say $\varphi_m.$ 
Also, 
\begin{equation}\nonumber
\| ( I -  \mathcal{K}_m )^{-1} \| \leq C_2.
\end{equation}
If the kernel $\kappa \in C^{d} ([a, b] \times [a, b])$  and if the right hand side $f \in C^{d}  [a, b],$  then 
the exact solution $\varphi$ of (\ref{eq:3.2}) is in $C^{d} [a, b]$ and 
\begin{equation}\label{eq:3.6}
\|\varphi - \varphi_m \|_\infty = O \left ( \tilde{h}^{d} \right ).
\end{equation}
(See  Atkinson \cite{Atk3}).

\subsection{Discrete Modified Projection Methods}

Note that since $\mathcal{K}_m$ is a linear operator, the discrete modified projection operator defined by (\ref{eq:2.23}) can be written as
\begin{eqnarray}\nonumber
\tilde{\mathcal{K}}_n^M = Q_n \mathcal{K}_m + \mathcal{K}_m Q_n - Q_n \mathcal{K}_m Q_n
\end{eqnarray}
and
\begin{eqnarray}\nonumber
\|\mathcal{K}_m - \tilde{\mathcal{K}}_n^M \| & =&\| (I - Q_n)  \mathcal{K}_m   (I - Q_n)  \|.
\end{eqnarray}
Since $Q_n$ converges to the Identity operator pointwise on $C [a, b]$ and $\{ \mathcal{K}_m\}$ is a collectively compact family, it follows that
\begin{equation}\nonumber
\|(I - Q_n  ) \mathcal{K}_m \| \rightarrow 0 \; \mbox {as} \; n \rightarrow \infty.
\end{equation}
Hence using the estimate (\ref{eq:2.20}), we obtain 
\begin{eqnarray}\nonumber
\|\mathcal{K}_m - \tilde{\mathcal{K}}_n^M \| 
&\leq &(1 + q) \| (I - Q_n)  \mathcal{K}_m \| \rightarrow 0 \; \mbox {as} \; n \rightarrow \infty.
\end{eqnarray}

\newpage
Thus, for $n$ and $m$ big enough, 
 $1$ belongs to the resolvent set of $\tilde {\mathcal{K}}_n^M$ and
\begin{equation}\nonumber
x_n - \tilde {\mathcal{K}}_n^M x_n = f
\end{equation}
has a unique solution, say $z_n^M.$ Also,
\begin{eqnarray}\label{***}
 \| ( I -  \tilde{\mathcal{K}}_n^M )^{-1} \| \leq 2 C_2.
\end{eqnarray}
The Discrete Iterated Modified Projection solution is defined as
\begin{equation}\nonumber
\tilde{z}_n^M =  \mathcal{K}_m z_n^M + f.
\end{equation}




\subsection{Orders of Convergence}

Let $ p \in \N$ and choose $ m = n p.$ Then $ p \tilde {h} = h.$ Note that the $m \rho = n p \rho$ nodes in the composite quadrature rule (\ref{eq:2.6}) can be divided into $n$ groups of $p \rho$
nodes and each of  the $p \rho $  nodes in a group lie precisely in one of the subinterval of the partition (\ref{eq:2.15}). Based on this
observation, we
rewrite the expression (\ref{eq:3.3}) for the Nystr\"{o}m operator $\mathcal{K}_m$ as follows.
\begin{eqnarray}\label{eq:3.13}
(\mathcal{K}_m x)(s) & = &  \tilde h \sum_{k=1}^n \sum_{\nu = 1}^p   \sum_{i=1}^\rho w_i \; \kappa \left (s, \zeta_i^{(k - 1) p + \nu } \right ) \; x \left (\zeta_i^{(k - 1) p + \nu } \right ), \;\;\; s \in [a, b].
\end{eqnarray}
 Let
  \begin{equation}\nonumber
   \Psi (t) = (t - q_1) \cdots (t - q_r), \;\;\; t \in [0, 1],
   \end{equation}
where $q_1, \ldots, q_r$ are Gauss-Legendre zeros in $[0, 1].$ Then
  \begin{equation}\nonumber
  \int_0^1 t^j \; \Psi (t) d t =  \int_0^1 t^j (t - q_1) \cdots (t - q_r) = 0 \;\;\; \mbox{for} \;\;\; 0 \leq j \leq r - 1.
  \end{equation}
We prove below a discrete analogue of the above result when the integral is replaced by a composite  numerical quadrature. 
\begin{lemma}\label{lemma:3.1}
Let $\{w_i, i = 1, \ldots, \rho\}$
and $\{\mu_i, i = 1, \ldots, \rho\}$ be respectively the weights and the node points in 
the basic quadrature formula defined in (\ref{eq:2.4}). Then
\begin{equation}\label{eq:3.16}
\sum_{\nu = 1}^p \; \sum_{i=1}^\rho w_i \; \left (  \frac {\nu - 1 + \mu_i} {p} \right )^j \;  \Psi \left (\frac {\nu - 1 + \mu_i} {p} \right ) = 0,  \;\;\; 0 \leq j \leq r - 1.
\end{equation}
\end{lemma}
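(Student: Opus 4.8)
The plan is to recognize the double sum in (\ref{eq:3.16}) as a composite quadrature approximation, up to a factor of $p$, of the integral $\int_0^1 t^j \Psi(t)\, dt$, which vanishes by the defining orthogonality property of the Gauss--Legendre zeros.

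First I would observe that the points $\frac{\nu-1+\mu_i}{p}$ are precisely the images of the basic nodes $\mu_i$ under the affine map sending $[0,1]$ onto the subinterval $[\frac{\nu-1}{p}, \frac{\nu}{p}]$ of the uniform partition of $[0,1]$ into $p$ pieces. Setting $F(t) = t^j \Psi(t)$, the change of variable $t = \frac{\nu-1+\tau}{p}$ on each subinterval gives
$$\int_{(\nu-1)/p}^{\nu/p} F(t)\, dt = \frac{1}{p} \int_0^1 F\!\left(\frac{\nu-1+\tau}{p}\right) d\tau.$$
Applying the basic rule (\ref{eq:2.4}) to the right-hand integral and summing over $\nu$, the left side of (\ref{eq:3.16}) equals $p$ times the composite quadrature approximation of $\int_0^1 F(t)\, dt$.

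The key step is exactness. For $0 \le j \le r-1$ the integrand $F(t) = t^j \Psi(t)$ is a polynomial of degree $j + r \le 2r - 1$. Since the basic rule (\ref{eq:2.4}) has degree of precision at least $2r-1$, it integrates $F\!\left(\frac{\nu-1+\tau}{p}\right)$, still a polynomial of degree $\le 2r-1$ in $\tau$, exactly on each subinterval; hence the composite rule reproduces $\int_0^1 F(t)\, dt$ without error. Finally, because $q_1, \ldots, q_r$ are the Gauss--Legendre zeros in $[0,1]$, the polynomial $\Psi$ is orthogonal to every polynomial of degree $\le r-1$, so $\int_0^1 t^j \Psi(t)\, dt = 0$ for $0 \le j \le r-1$. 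Therefore the left side of (\ref{eq:3.16}) equals $p \cdot 0 = 0$.

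There is no serious obstacle here; the only point requiring care is matching the degree count $j + r \le 2r-1$ against the hypothesis that (\ref{eq:2.4}) has degree of precision at least $2r-1$, which is exactly what makes the composite rule exact for the relevant integrands. The rest is the standard orthogonality of the Legendre polynomial underlying Gauss quadrature.
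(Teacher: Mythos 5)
Your proof is correct and follows essentially the same route as the paper's: interpret the double sum as $p$ times the composite quadrature applied to $t^j\Psi(t)$, invoke exactness of the basic rule for polynomials of degree $\le 2r-1$ (since $j+r\le 2r-1$), and conclude from the orthogonality $\int_0^1 t^j\,\Psi(t)\,dt=0$ of the Gauss--Legendre nodal polynomial. Your write-up is if anything slightly more explicit than the paper's about the affine change of variables and the degree count.
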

\begin{proof}

Note that
\begin{eqnarray*}
\int_0^1 f (t) d t = \sum_{\nu = 1}^p \int_{\frac {\nu - 1} {p}}^{\frac{\nu} {p}} f (t) d t
&\approx &  \frac {1} {p} \; \sum_{\nu = 1}^p \; \sum_{i=1}^\rho w_i \; f \left  ( \frac {\nu - 1 + \mu_i} {p} \right).
\end{eqnarray*}
The quadrature rule (\ref{eq:2.4}) is assumed to be exact for polynomials of degree $\leq 2 r - 1.$
Hence
$$  \int_0^1  t^j \; \Psi (t) \; d t =  \frac {1} {p} \; \sum_{\nu = 1}^p \; \sum_{i=1}^\rho w_i \; \left  ( \frac {\nu - 1 + \mu_i} {p} \right)^j \;  \Psi  \left  ( \frac {\nu - 1 + \mu_i} {p} \right),  \;\;\; 0 \leq j \leq r - 1.$$
Since for $0 \leq j \leq r - 1,$
$$   \int_0^1  t^j \; \Psi (t) \; d t = 0,  $$
the desired result follows.
\end{proof}

For future reference, we quote the following interpolation error estimates from Conte-de-Boor \cite {Conte}.
For $t \in [t_{k-1}, t_k],$
  \begin{eqnarray}\nonumber
  x (t) - (Q_n x) (t) &= & x [\tau_1^k, \ldots, \tau_r^k, t] \; (t - \tau_1^k) \cdots (t - \tau_r^k),
    \end{eqnarray}
     where $x [\tau_1^k, \ldots, \tau_r^k, t] $ denotes the divided difference of $x$
based on $\{\tau_1^k, \ldots, \tau_r^k, t \}.$

Substituting for $\tau_i^k$ from (\ref{eq:2.18}) we obtain,
\begin{eqnarray}\nonumber
  x (t) - (Q_n x) (t) 
  &= &x [\tau_1^k, \ldots, \tau_r^k, t] \;  \; (t - t_{k-1} - q_1 h) \cdots (t - t_{k-1} - q_r h)\\ \nonumber
  &= & x [\tau_1^k, \ldots, \tau_r^k, t] \;  \; \left (\frac{t - t_{k-1}} {h}  - q_1 \right ) \cdots \left (\frac{t - t_{k-1}} {h} - q_r \right) h^r \\
\nonumber
\\\label{eq:3.17}
    & = & x [\tau_1^k, \ldots, \tau_r^k, t] \; \;  \Psi \left ( \frac {t - t_{k-1}} {h} \right ) h^r.
  \end{eqnarray}
     If $x \in C^r [a, b],$  
   then
   \begin{eqnarray}\label{eq:3.18}
   \| x - Q_n x \|_\infty 
   &\leq & \frac {\|x^{(r)}\|_\infty } {r!} \|\Psi \|_\infty h^r
    =  C_3 \|x^{(r)} \|_\infty h^r,
   \end{eqnarray}
   where
   $$\displaystyle {C_3 = \frac {\|\Psi \|_\infty} {r!}}.$$

We use the following notation:
$$ \| \kappa \|_{2 r, \infty} = \max \{ \|D^{(i, j)} \kappa \|_\infty: 0 \leq i + j \leq 2 r \} \; \mbox{and} \;   \|x \|_{2 r , \infty } = \max \{ \| x^{(j)} \|_\infty: 0 \leq j \leq 2 r \}, $$
  where $x^{(j)}$ denotes the $j$th derivative of $x$ and 
 $$D^{(i, j)} \kappa (s, t) = \frac {\partial^{i + j} \kappa} {\partial s^i \; \partial t^j}  (s, t), \; \;\; a \leq s, t \leq b.$$
We now prove some preliminary results which are needed to obtain the orders of convergence in  the Discrete Modified 
ProjectionMethod  and the Discrete Iterated  Modifited 
Projection Method. The following proposition is crucial in what follows.

\begin{proposition}\label{prop:3.2}
If $\kappa \in C^r ( [a, b] \times [a, b]) $ and $ x \in C^{2 r} [a, b],$ then
\begin{eqnarray}\label{eq:3.19}
\| \mathcal{K}_m (I - Q_n) x \|_\infty 
& \leq & C_4 \| \kappa \|_{r, \infty}  \|x \|_{2 r, \infty } \; h^{2 r},
\end{eqnarray}
where
$$C_4 =  \frac {1} {r!} 2^r  (b - a)    \| \Psi \|_\infty \left (\sum_{i=1}^\rho |w_i| \right )$$
is a constant independent of $n$ and of $m.$
\end{proposition}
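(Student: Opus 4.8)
The plan is to exploit the grouped form (\ref{eq:3.13}) of $\mathcal{K}_m$ together with the exact interpolation-error formula (\ref{eq:3.17}) and the discrete orthogonality of Lemma \ref{lemma:3.1}. First I would write $y = (I - Q_n) x$ and evaluate $y$ at each quadrature node $\zeta_i^{(k-1)p+\nu}$, which lies in the $k$th subinterval $[t_{k-1}, t_k]$. Since $\zeta_i^{(k-1)p+\nu} = t_{k-1} + (\nu - 1 + \mu_i)\tilde{h}$ and $h = p\,\tilde{h}$, formula (\ref{eq:3.17}) gives
\begin{equation}\nonumber
y\!\left(\zeta_i^{(k-1)p+\nu}\right) = x\!\left[\tau_1^k, \ldots, \tau_r^k, \zeta_i^{(k-1)p+\nu}\right] \Psi\!\left(\frac{\nu - 1 + \mu_i}{p}\right) h^r,
\end{equation}
so that, after substituting into (\ref{eq:3.13}), a common factor $h^r$ can be pulled out and $(\mathcal{K}_m y)(s) = \tilde{h}\, h^r \sum_{k=1}^n S_k(s)$, where $S_k(s)$ is the inner double sum over $\nu$ and $i$.

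The heart of the argument is to gain a second factor $h^r$ from each $S_k(s)$. For fixed $s$ and $k$ I would regard the node factor $g_k(\zeta) = \kappa(s, \zeta)\, x[\tau_1^k, \ldots, \tau_r^k, \zeta]$ as a function of the rescaled variable $\xi = (\zeta - t_{k-1})/h \in [0, 1]$, writing $\phi_k(\xi) = g_k(t_{k-1} + \xi h)$. Taylor-expanding $\phi_k$ about $\xi = 0$ to order $r - 1$ with Lagrange remainder, the polynomial part is a combination of the monomials $\xi^j$, $0 \le j \le r-1$, each of which is annihilated when summed against $w_i\,\Psi$ over the nodes by Lemma \ref{lemma:3.1}. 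Hence only the remainder survives:
\begin{equation}\nonumber
S_k(s) = \sum_{\nu=1}^p \sum_{i=1}^\rho w_i\, R_k\!\left(\frac{\nu-1+\mu_i}{p}\right) \Psi\!\left(\frac{\nu-1+\mu_i}{p}\right), \qquad |R_k(\xi)| \le \frac{\|\phi_k^{(r)}\|_\infty}{r!}.
\end{equation}
This is exactly the superconvergence mechanism that upgrades the naive $O(h^r)$ bound to $O(h^{2r})$.

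It then remains to bound $\|\phi_k^{(r)}\|_\infty = h^r \|g_k^{(r)}\|_\infty$ uniformly in $k$ and $s$. I would differentiate $g_k$ by the Leibniz rule and use the standard identity $\frac{d^{r-l}}{d\zeta^{r-l}} x[\tau_1^k, \ldots, \tau_r^k, \zeta] = (r-l)!\, x[\tau_1^k, \ldots, \tau_r^k, \zeta, \ldots, \zeta]$ (with $r-l+1$ copies of $\zeta$) together with the mean-value bound $|x[y_0, \ldots, y_q]| \le \|x^{(q)}\|_\infty / q!$ for a divided difference of order $q$; the top term ($l = 0$) invokes an order-$2r$ divided difference, which is precisely why the hypothesis $x \in C^{2r}$ is needed. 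Collecting terms gives $\|g_k^{(r)}\|_\infty \le \big(\sum_{l=0}^r \binom{r}{l}\tfrac{(r-l)!}{(2r-l)!}\big)\|\kappa\|_{r,\infty}\|x\|_{2r,\infty}$, and since $\frac{(r-l)!}{(2r-l)!} \le 1$ the bracket is at most $\sum_{l=0}^r \binom{r}{l} = 2^r$.

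Finally I would assemble the pieces: bounding $|R_k|$ by $\|\phi_k^{(r)}\|_\infty/r!$ and $|\Psi|$ by $\|\Psi\|_\infty$ gives $|S_k(s)| \le \frac{h^r}{r!}\, 2^r \|\kappa\|_{r,\infty}\|x\|_{2r,\infty}\|\Psi\|_\infty\, p \big(\sum_i |w_i|\big)$, whence
\begin{equation}\nonumber
|(\mathcal{K}_m y)(s)| \le \tilde{h}\, h^r \sum_{k=1}^n |S_k(s)| \le (\tilde{h}\,p)\,(n)\,\frac{2^r}{r!}\,h^{2r}\|\Psi\|_\infty\Big(\sum_{i=1}^\rho |w_i|\Big)\|\kappa\|_{r,\infty}\|x\|_{2r,\infty}.
\end{equation}
Using $\tilde{h}\,p = h$ and $n h = b - a$ collapses the prefactor to $(b-a)$ and yields exactly the constant $C_4$. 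The step I expect to be the main obstacle is the uniform control of the derivatives of the divided difference $x[\tau_1^k, \ldots, \tau_r^k, \zeta]$ and the bookkeeping in the Leibniz expansion that produces the clean constant $2^r$; everything else is a direct application of (\ref{eq:3.17}) and Lemma \ref{lemma:3.1}.
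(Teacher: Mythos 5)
Your proposal is correct and follows essentially the same route as the paper's proof: the grouped form of $\mathcal{K}_m$, substitution of the interpolation-error formula, a Taylor expansion of $\kappa(s,\cdot)\,x[\tau_1^k,\ldots,\tau_r^k,\cdot]$ to order $r-1$ whose polynomial part is killed by Lemma \ref{lemma:3.1}, and the same final bookkeeping with $\tilde{h}p=h$ and $nh=b-a$. The only difference is that you supply the Leibniz/divided-difference argument behind the bound $\max|(g_s^k)^{(r)}|\leq 2^r\|\kappa\|_{r,\infty}\|x\|_{2r,\infty}$, which the paper states without proof.
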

\begin{proof}
For $s \in [a, b],$
\begin{eqnarray}\nonumber
(\mathcal{K}_m (I - Q_n) x)(s) &=& \tilde h \sum_{k=1}^n \sum_{\nu = 1}^p   \sum_{i=1}^\rho w_i \; \kappa \left (s, \zeta_i^{(k - 1) p + \nu } \right ) \times \\\nonumber
&& \hspace* {1 in} 
\left [ x \left ( \zeta_i^{(k - 1) p + \nu } \right ) - Q_n x \left (\zeta_i^{(k - 1) p + \nu } \right ) \right ].
\end{eqnarray}
Substituting from   the relation (\ref{eq:3.17}), we obtain
\begin{eqnarray}\nonumber
(\mathcal{K}_m (I - Q_n) x)(s) & = & \tilde h \; h^{r} \sum_{k=1}^n \sum_{\nu = 1}^p   \sum_{i=1}^\rho w_i \; \kappa \left (s, \zeta_i^{(k - 1) p + \nu }  \right ) \times
 \\\nonumber
&& \hspace* {1 in} x  [\tau_1^k, \ldots, \tau_r^k, \zeta_i^{(k - 1) p + \nu }] \; \Psi \left ( \frac {\zeta_i^{(k - 1) p + \nu  }- t_{k-1}} {h} \right ).
\end{eqnarray}
Note that
\begin{equation}\label{eq:3.20}
 \frac {\zeta_i^{(k - 1) p + \nu  }- t_{k-1}} {h} = \frac {( \nu  - 1 + \mu_i ) \; \tilde h} {p \; \tilde h } = \frac { \nu  - 1 + \mu_i  } {p }.
 \end{equation}
Thus,
 \begin{eqnarray}\nonumber
(\mathcal{K}_m (I - Q_n) x)(s) & = & \tilde h \; h^{r} \sum_{k=1}^n \sum_{\nu = 1}^p   \sum_{i=1}^\rho w_i \; \kappa \left (s, \zeta_i^{(k - 1) p + \nu }  \right ) \times
 \\\nonumber
&& \hspace* {1 in} x  [\tau_1^k, \ldots, \tau_r^k, \zeta_i^{(k - 1) p + \nu }] \; \Psi \left ( \frac { \nu  - 1 + \mu_i  } {p } \right ).\\
\label{eq:3.21}
\end{eqnarray}
For a fixed $s \in [a, b],$ define
$$g_s^k (t) = \kappa (s, t) \; x [\tau_1^k, \ldots, \tau_r^k, t] \;\;\; \mbox{for} \;\;\; t_{k-1} \leq t \leq t_k.$$
Then
\begin{eqnarray}\label {eq:3.22}
(\mathcal{K}_m (I - Q_n) x)(s) &=& \tilde h \; h^{r} \sum_{k=1}^n \sum_{\nu = 1}^p   \sum_{i=1}^\rho w_i \; g_s^k \left ( \zeta_i^{(k - 1) p + \nu }  \right ) 
\; \Psi \left ( \frac { \nu  - 1 + \mu_i  } {p } \right ).
\end{eqnarray}
Since $k  \in C^r ([a, b] \times [a, b])$ and $x \in C^{2 r} [a, b],$ it follows that
$g_s^k \in C^r [t_{k-1}, t_k]$  for $ k = 1, \ldots, n.$
For $ t \in [t_{k-1}, t_k],$  we  expand $g_s^k (t)$ about $t_{k-1}$ and obtain
\begin{eqnarray}\nonumber
g_s^k (t) & = & \sum_{j = 0}^{r - 1} \frac {1} {j!} \; (t - t_{k-1})^j \; (g_s^k )^{(j)} (t_{k-1}) + \frac {1} {r!}  (t - t_{k-1})^r (g_s^k )^{(r)} (\xi_t^k),
\;\;\; \xi_t^k \in (t_{k-1}, t_k). 
\end{eqnarray}
Substitute the above expression for $g_s^k (t)$ in (\ref{eq:3.22}) to obtain
 \begin{eqnarray*}\nonumber
(\mathcal{K}_m (I - Q_n) x)(s) 
&=& \tilde h \; h^{r}  \sum_{k=1}^n  \sum_{j = 0}^{r - 1} \frac {1} {j!}  (g_s^k )^{(j)} (t_{k-1})  \times \\
&&  \hspace*{1 in} \sum_{\nu = 1}^p   \sum_{i=1}^\rho 
w_i  \left ( \zeta_i^{(k - 1) p + \nu} - t_{k-1} \right )^j  \; \Psi \left (  \frac { \nu  - 1 + \mu_i  } {p  } \right )\\
& + & \frac {1} {r!} \tilde h \; h^{r} \sum_{k=1}^n \sum_{\nu = 1}^p   \sum_{i=1}^\rho w_i \;  \left ( \zeta_i^{(k - 1) p + \nu} - t_{k-1} \right )^r (g_s^k )^{( r )} (\xi_{i, \nu}^k)  \Psi \left (  \frac { \nu  - 1 + \mu_i  } {p  } \right ). 
\end{eqnarray*}
Using (\ref{eq:3.20}) the above equation reduces to
\begin{eqnarray*}\nonumber
(\mathcal{K}_m (I - Q_n) x)(s) &=& \tilde h \; h^{r}  \sum_{k=1}^n  \sum_{j = 0}^{r - 1} \frac {1} {j!}  (g_s^k )^{(j)} (t_{k-1}) \; \tilde h^j \times \\
&&  \hspace*{1 in} \sum_{\nu = 1}^p   \sum_{i=1}^\rho 
w_i  \left (  \nu  - 1 + \mu_i   \right )^j  \; \Psi \left (  \frac { \nu  - 1 + \mu_i  } {p  } \right )\\
& + & \frac {1} {r!} \tilde h^{r+1} \; h^{r} \sum_{k=1}^n \sum_{\nu = 1}^p   \sum_{i=1}^\rho w_i \;  \left (  \nu  - 1 + \mu_i   \right )^r (g_s^k )^{( r )} (\xi_{i, \nu}^k) \Psi \left (  \frac { \nu  - 1 + \mu_i  } {p  } \right ).
\end{eqnarray*}
From Lemma 3.1 we see that the first term in the above expression vanishes. Hence 
\begin{eqnarray*}
\| \mathcal{K}_m (I - Q_n) x \|_\infty & \leq & \frac {1} {r!}  \tilde h^{r+1} \; h^{r}  n  \max_{s \in [a, b]} \max_{1 \leq k \leq n} 
\left ( \max_{ {t \in [t_{k-1}, t_k]}  } \left |  (g_s^k )^{(r)} (t) \right | \right ) \times\\
&& \hspace* {0.5 in}
\sum_{\nu = 1}^p   \sum_{i=1}^\rho   \left \{ \; |w_i| \;  \left | \nu  - 1 + \mu_i  \right |^r \left | \Psi \left (  \frac { \nu  - 1 + \mu_i  } {p  } \right )  \right |  \right \}.
\end{eqnarray*}
It can be seen that for $s \in [a, b],$ 
 \begin{equation}\nonumber
  \max_{ {t \in [t_{k-1}, t_k]}  } \left |  (g_s^k )^{(r)} (t) \right | \leq 2^r \| \kappa \|_{r, \infty} \|x \|_{2 r, \infty }.
  \end{equation}
Since $\mu_i \in [0, 1],$ it follows that
$$ \nu - 1 + \mu_i \leq p \;\;\; \mbox {for} \; \; \; 1 \leq \nu \leq p$$
and hence
\begin{eqnarray*}
\sum_{\nu = 1}^p   \sum_{i=1}^\rho   \left \{ \; |w_i| \;  \left | \nu  - 1 + \mu_i  \right |^r \left | \Psi \left (  \frac { \nu  - 1 + \mu_i  } {p  } \right )  \right |  \right \}
\leq p^{r+1} \|\Psi\|_\infty \left (\sum_{i=1}^\rho   |w_i| \right).
\end{eqnarray*}
Since $\tilde{h} p = h$ and $h n = b - a,$  it follows that
\begin{eqnarray}\nonumber
\| \mathcal{K}_m (I - Q_n) x \|_\infty &\leq& \ C_4 \|\kappa \|_{r, \infty} \; \|x \|_{2 r, \infty} \;  h^{2 r},
\end{eqnarray}
 which completes the proof.
\end{proof}
The following two results are proved using Proposition \ref{prop:3.2}.
\begin{proposition}\label{prop:3.3}
If $\kappa \in C^{2r} ( [a, b] \times [a, b]), $  then
\begin{eqnarray}\label{eq:3.25}
\| \mathcal{K}_m (I - Q_n) \mathcal{K}_m\|
& \leq & C_5 \| \kappa \|_{r, \infty}  \|\kappa  \|_{2 r, \infty } \; h^{2 r},
\end{eqnarray}
where
$$C_5 =  C_4  (b - a) \left (\sum_{i=1}^\rho   |w_i| \right)  = \frac {1} {r!} 2^r  (b - a)^2    \| \Psi \|_\infty \left (\sum_{i=1}^\rho |w_i| \right )^2$$
is a constant independent of $n$ and of $m.$
\end{proposition}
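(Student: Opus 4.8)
The plan is to factor the composite operator and reduce everything to Proposition~\ref{prop:3.2}. For an arbitrary $v \in C[a,b]$ with $\|v\|_\infty \leq 1$, set $x = \mathcal{K}_m v$, so that
$$\mathcal{K}_m (I - Q_n) \mathcal{K}_m v = \mathcal{K}_m (I - Q_n) x.$$
Since $\kappa \in C^{2r}([a,b]\times[a,b]) \subset C^r([a,b]\times[a,b])$ and, as I will verify, $x \in C^{2r}[a,b]$, Proposition~\ref{prop:3.2} applies with this choice of $x$ and yields
$$\| \mathcal{K}_m (I - Q_n) x \|_\infty \leq C_4 \,\|\kappa\|_{r,\infty}\, \|x\|_{2r,\infty}\, h^{2r}.$$
The whole argument then comes down to bounding $\|x\|_{2r,\infty} = \|\mathcal{K}_m v\|_{2r,\infty}$ by a fixed multiple of $\|v\|_\infty$, uniformly in $m$ and $n$.

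To do this I would use the explicit expression (\ref{eq:3.3}) for the Nyström operator in the linear case. Because $\mathcal{K}_m v$ is a finite linear combination of the functions $s \mapsto \kappa(s, \zeta_i^j)$, it inherits $C^{2r}$ smoothness in $s$ from $\kappa$, and its derivatives are obtained by differentiating under the finite sum:
$$(\mathcal{K}_m v)^{(l)}(s) = \tilde h \sum_{j=1}^m \sum_{i=1}^\rho w_i \, D^{(l,0)}\kappa(s, \zeta_i^j)\, v(\zeta_i^j), \qquad 0 \leq l \leq 2r.$$
Estimating crudely, $|(\mathcal{K}_m v)^{(l)}(s)| \leq \tilde h \left(\sum_{j=1}^m \sum_{i=1}^\rho |w_i|\right)\|D^{(l,0)}\kappa\|_\infty \|v\|_\infty$, and the decisive simplification is that $\tilde h \sum_{j=1}^m \sum_{i=1}^\rho |w_i| = \tilde h\, m \sum_{i=1}^\rho |w_i| = (b-a)\sum_{i=1}^\rho |w_i|$ by the relation $\tilde h m = b - a$. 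Since $\|D^{(l,0)}\kappa\|_\infty \leq \|\kappa\|_{2r,\infty}$ for $0 \leq l \leq 2r$, taking the maximum over $l$ gives
$$\|\mathcal{K}_m v\|_{2r,\infty} \leq (b-a)\left(\sum_{i=1}^\rho |w_i|\right) \|\kappa\|_{2r,\infty}\,\|v\|_\infty.$$

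Combining the two displayed estimates, using $\|v\|_\infty \leq 1$, and taking the supremum over the unit ball yields $\|\mathcal{K}_m (I - Q_n)\mathcal{K}_m\| \leq C_4 (b-a)\left(\sum_{i=1}^\rho |w_i|\right)\|\kappa\|_{r,\infty}\|\kappa\|_{2r,\infty}\, h^{2r}$, which is precisely (\ref{eq:3.25}) with $C_5 = C_4 (b-a)\left(\sum_{i=1}^\rho |w_i|\right)$. The only point requiring genuine care, and hence the main (if modest) obstacle, is the \emph{uniformity} of the intermediate bound: I must ensure that $\|\mathcal{K}_m v\|_{2r,\infty}$ does not grow with $m$. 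This is exactly where the identity $\tilde h m = b - a$ is indispensable, since it absorbs the $m$-fold quadrature sum into the fixed length $b-a$, leaving the constant $C_5$ independent of both $n$ and $m$, as asserted.
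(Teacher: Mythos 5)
Your proposal is correct and follows essentially the same route as the paper: apply Proposition \ref{prop:3.2} with $x = \mathcal{K}_m v$, then bound $\|\mathcal{K}_m v\|_{2r,\infty}$ by differentiating the finite quadrature sum under the $s$-variable and using $\tilde h\, m = b-a$ to absorb the sum into the constant, before taking the supremum over the unit ball. No gaps; the uniformity point you flag is exactly the one the paper's proof relies on.
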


\begin{proof}
From Proposition \ref{prop:3.2}, we have
\begin{eqnarray}\nonumber
\| \mathcal{K}_m (I - Q_n)  \mathcal{K}_mx \|_\infty 
& \leq & C_4 \| \kappa \|_{r, \infty}  \| \mathcal{K}_m x \|_{2 r, \infty } \; h^{2 r}.
\end{eqnarray}
From (\ref{eq:3.3})
\begin{eqnarray}\nonumber
(\mathcal{K}_m x ) (s) = \tilde {h} \sum_{j=1}^m \sum_{i=1}^\rho w_i \kappa \left (s, \zeta_i^j \right ) x \left ( \zeta_i^j \right ).
\end{eqnarray}
Hence for $ \beta = 1, \ldots, 2 r,$
\begin{eqnarray}\nonumber
(\mathcal{K}_m x )^{(\beta)} (s) = \tilde {h} \sum_{j=1}^m \sum_{i=1}^\rho w_i  D^{(\beta, 0)} \kappa \left (s, \zeta_i^j \right ) x \left ( \zeta_i^j \right )
\end{eqnarray}
and 
\begin{eqnarray}\nonumber
\|(\mathcal{K}_m x )^{(\beta)}\|_\infty \leq \tilde {h} m \left (\sum_{i=1}^\rho | w_i | \right ) \left \| D^{(\beta, 0)} \kappa \right \|_\infty \|x \|_\infty.
\end{eqnarray}
As a consequence,
\begin{eqnarray}\nonumber
\| \mathcal{K}_m x \|_{2 r, \infty }  \leq (b - a) \left (\sum_{i=1}^\rho | w_i | \right ) \|\kappa \|_{2 r, \infty } \|x \|_\infty.
\end{eqnarray}
Thus,
\begin{eqnarray}\nonumber
\| \mathcal{K}_m (I - Q_n)  \mathcal{K}_mx \|_\infty 
& \leq & C_4 (b - a) \left (\sum_{i=1}^\rho | w_i | \right )  \| \kappa \|_{r, \infty} \|\kappa \|_{2 r, \infty } \|x \|_\infty \; h^{2 r}.
\end{eqnarray}
The desired estimate follows by taking the supremum over the set $\{ x \in C[a, b]: \|x \|_\infty \leq 1 \}.$
\end{proof}

\begin{proposition}\label{prop:3.4}
If $\kappa \in C^{2 r }( [a, b] \times [a, b]) $ and $ x \in C^{2 r} [a, b],$ then
\begin{equation}\label{eq:3.26}
\|(I - Q_n ) \mathcal{K}_m (I - Q_n) x \|_\infty \leq C_3 C_4 \| \kappa \|_{2 r, \infty}  \|x \|_{2 r, \infty } h^{3 r}.
\end{equation}
If $\kappa \in C^{3 r }( [a, b] \times [a, b]) $ and $ x \in C^{2 r} [a, b],$ then
\begin{equation}\label{eq:3.27}
\|  \mathcal{K}_m (I - Q_n ) \mathcal{K}_m (I - Q_n) x \|_\infty \leq  ( C_4)^2 \; \| \kappa \|_{ r, \infty}  \|\kappa \|_{3 r, \infty}  \|x\|_{2 r, \infty} h^{4 r}.
\end{equation}
\end{proposition}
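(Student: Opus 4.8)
The plan is to derive both estimates directly from Proposition \ref{prop:3.2}, using one elementary observation: differentiating a Nyström operator with respect to its first variable produces another Nyström operator whose kernel is the corresponding $s$-derivative of $\kappa$. Indeed, from (\ref{eq:3.3}), for any $v \in C[a,b]$ and any $\beta \geq 0$ for which $D^{(\beta,0)}\kappa$ exists,
$$(\mathcal{K}_m v)^{(\beta)}(s) = \tilde h \sum_{j=1}^m \sum_{i=1}^\rho w_i\, D^{(\beta,0)}\kappa\!\left(s,\zeta_i^j\right) v\!\left(\zeta_i^j\right),$$
since the sum is finite. Hence $(\mathcal{K}_m v)^{(\beta)} = \mathcal{L}_m v$, where $\mathcal{L}_m$ is the Nyström operator (\ref{eq:3.3}) built from the kernel $D^{(\beta,0)}\kappa$ in place of $\kappa$. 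Because the proof of Proposition \ref{prop:3.2} uses nothing about the kernel beyond its membership in $C^r([a,b]\times[a,b])$ and the Nyström structure (\ref{eq:3.3}), it applies verbatim to $\mathcal{L}_m$.

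For the first estimate, I would set $y = \mathcal{K}_m(I - Q_n)x$ and apply the interpolation bound (\ref{eq:3.18}) to $y$, giving $\|(I - Q_n)y\|_\infty \leq C_3 \|y^{(r)}\|_\infty h^r$; this is legitimate because $\kappa \in C^{2r}$ forces $y \in C^{2r}[a,b] \subset C^r[a,b]$. The point is that $y^{(r)} = \mathcal{L}_m (I - Q_n)x$ with $\mathcal{L}_m$ the Nyström operator for the kernel $D^{(r,0)}\kappa$, which lies in $C^r([a,b]\times[a,b])$ precisely because $\kappa \in C^{2r}$. Proposition \ref{prop:3.2} then yields $\|y^{(r)}\|_\infty \leq C_4 \|D^{(r,0)}\kappa\|_{r,\infty}\|x\|_{2r,\infty}h^{2r}$, and since every derivative $D^{(r+i,j)}\kappa$ with $i + j \leq r$ has total order at most $2r$, one has $\|D^{(r,0)}\kappa\|_{r,\infty} \leq \|\kappa\|_{2r,\infty}$. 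Multiplying the two displayed bounds gives (\ref{eq:3.26}).

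For the second estimate, I would put $z = \mathcal{K}_m(I - Q_n)x$ and write the left-hand side as $\|\mathcal{K}_m(I - Q_n)z\|_\infty$. Proposition \ref{prop:3.2} (with the original kernel $\kappa \in C^r$) bounds this by $C_4 \|\kappa\|_{r,\infty}\|z\|_{2r,\infty}h^{2r}$, so the task reduces to controlling $\|z\|_{2r,\infty}$. For each $0 \leq \beta \leq 2r$, the derivative $z^{(\beta)} = \mathcal{L}_m(I - Q_n)x$ is the Nyström operator for $D^{(\beta,0)}\kappa$ applied to $(I - Q_n)x$; this kernel lies in $C^r$ because $\kappa \in C^{3r}$ and $\beta \leq 2r$. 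Applying Proposition \ref{prop:3.2} for each $\beta$ gives $\|z^{(\beta)}\|_\infty \leq C_4 \|D^{(\beta,0)}\kappa\|_{r,\infty}\|x\|_{2r,\infty}h^{2r}$, and since $D^{(\beta+i,j)}\kappa$ with $\beta \leq 2r$, $i + j \leq r$ has total order at most $3r$, we get $\|D^{(\beta,0)}\kappa\|_{r,\infty} \leq \|\kappa\|_{3r,\infty}$ uniformly in $\beta$. Taking the maximum over $\beta$ yields $\|z\|_{2r,\infty} \leq C_4\|\kappa\|_{3r,\infty}\|x\|_{2r,\infty}h^{2r}$, and substituting back produces (\ref{eq:3.27}).

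The calculations are routine; the only thing requiring care is the bookkeeping of the smoothness indices. One must verify at each step that the differentiated kernel still lies in $C^r([a,b]\times[a,b])$ (which is exactly why the hypothesis jumps from $C^{2r}$ to $C^{3r}$ between the two parts) and that the factor of $\|\kappa\|$ emerging from Proposition \ref{prop:3.2} is correctly absorbed into $\|\kappa\|_{2r,\infty}$ or $\|\kappa\|_{3r,\infty}$. No new idea beyond Proposition \ref{prop:3.2} and the interpolation estimate (\ref{eq:3.18}) is needed.
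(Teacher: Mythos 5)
Your proof is correct and follows essentially the same route as the paper: the paper likewise bounds $\|(I-Q_n)\mathcal{K}_m(I-Q_n)x\|_\infty$ via (\ref{eq:3.18}) and then observes that $(\mathcal{K}_m(I-Q_n)x)^{(\beta)}$ is the Nystr\"{o}m operator with kernel $\partial^\beta\kappa/\partial s^\beta$ applied to $(I-Q_n)x$, so that Proposition \ref{prop:3.2} applies with $\|\kappa\|$ replaced by $\|D^{(\beta,0)}\kappa\|_{r,\infty}\leq\|\kappa\|_{\beta+r,\infty}$. Your bookkeeping of the smoothness indices matches the paper's exactly.
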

\begin{proof}
 From the estimate (\ref{eq:3.18}), we obtain
\begin{equation*}
\|(I - Q_n ) \mathcal{K}_m (I - Q_n) x \|_\infty \leq C_3 \| \left ( \mathcal{K}_m (I - Q_n)x \right )^{(r)}\|_\infty h^r.
\end{equation*}
Differentiating  (\ref{eq:3.21}) $r$ times with respect to $s,$ we obtain
\begin{eqnarray}\nonumber
(\mathcal{K}_m (I - Q_n) x)^{(r)}(s) & = & \tilde h \; h^{r} \sum_{k=1}^n \sum_{\nu = 1}^p   \sum_{i=1}^\rho w_i \; \frac {\partial^r \kappa} {\partial s^r} \left (s, \zeta_i^{(k - 1) p + \nu }  \right ) \; 
 \times \\\nonumber
&& \hspace* {1 in} x  [\tau_1^k, \ldots, \tau_r^k, \zeta_i^{(k - 1) p + \nu }] \; \Psi \left ( \frac { \nu  - 1 + \mu_i  } {p } \right ).
\end{eqnarray}
Let 
$$\ell (s, t) = \frac {\partial^r \kappa} {\partial s^r} (s, t), \;\;\; a \leq s, t \leq b.$$
If $\kappa \in C^{2 r} ([a,b] \times [a, b]),$ then $\ell \in C^{ r} ([a,b] \times [a, b]).$
Hence proceeding  as in the proof of Proposition \ref{prop:3.2}, we obtain
\begin{eqnarray}\nonumber
\| (\mathcal{K}_m (I - Q_n) x)^{(r)}  \|_\infty &\leq&
C_4 \| \ell \|_{r, \infty}  \|x \|_{2 r, \infty } h^{2 r} \leq
 C_4 \| \kappa \|_{2 r, \infty}  \|x \|_{2 r, \infty } h^{2 r}.
\end{eqnarray}
As a consequence,
\begin{equation*}
\|(I - Q_n ) \mathcal{K}_m (I - Q_n) x \|_\infty \leq C_3 C_4 \| \kappa \|_{2 r, \infty}  \|x \|_{2 r, \infty } h^{3 r},
\end{equation*}
which completes the proof of (\ref{eq:3.26}).

Let $\kappa \in C^{3 r} ([a, b] \times [a, b])$ and $ x \in C^{2 r} [a, b].$ Then $ \mathcal{K}_m (I - Q_n) x \in C^{3 r} [a, b].$
Using (\ref{eq:3.19}) we obtain,
\begin{equation}\label{eq:3.28}
\|  \mathcal{K}_m (I - Q_n ) \mathcal{K}_m (I - Q_n) x \|_\infty \leq  C_4 \| \kappa \|_{ r, \infty}  \| \mathcal{K}_m (I - Q_n) x \|_{2 r, \infty } 
h^{2 r}.
\end{equation}
From (\ref{eq:3.21}) for $ j = 0, 1, \ldots, 2 r,$ 
\begin{eqnarray}\nonumber
(\mathcal{K}_m (I - Q_n) x)^{(j)}(s) & = & \tilde h \; h^{r} \sum_{k=1}^n \sum_{\nu = 1}^p   \sum_{i=1}^\rho w_i \; \frac {\partial^j \kappa} {\partial s^j}  \left (s, \zeta_i^{(k - 1) p + \nu }  \right ) \times
 \\\nonumber
&& \hspace* {1 in} x  [\tau_1^k, \ldots, \tau_r^k, \zeta_i^{(k - 1) p + \nu }] \; \Psi \left ( \frac { \nu  - 1 + \mu_i  } {p } \right ).
\end{eqnarray}
Note that $\displaystyle {\frac  {\partial^j \kappa} {\partial s^j} \in C^{3 r - j} ([a, b] \times [a, b]), }$ for $ j = 0, 1, \ldots, 2 r.$ 
Proceeding as in the proof of Proposition \ref{prop:3.2},  we obtain
\begin{eqnarray}\nonumber
\|(\mathcal{K}_m (I - Q_n) x)^{(j)} \|_\infty
& \leq & C_4  \;  \|\kappa \|_{j+r, \infty} \|x\|_{2 r, \infty} h^{2 r} 
\end{eqnarray}
and hence
\begin{eqnarray*}
\| \mathcal{K}_m (I - Q_n) x \|_{2 r, \infty }  &=& \max \left \{ \|(\mathcal{K}_m (I - Q_n) x)^{(j)} \|_\infty: 0 \leq j \leq 2 r \right \} \\
& \leq & C_4 \|\kappa \|_{3 r, \infty} \|x\|_{2 r, \infty} h^{2 r}.
\end{eqnarray*}
As a consequence,
\begin{equation*}
\|  \mathcal{K}_m (I - Q_n ) \mathcal{K}_m (I - Q_n) x \|_\infty \leq  ( C_4 )^2 \; \| \kappa \|_{ r, \infty}  \|\kappa \|_{3 r, \infty}  \|x\|_{2 r, \infty} h^{4 r},
\end{equation*}
which completes the proof of (\ref{eq:3.27}).
\end{proof}
We now prove our main result for linear integral equations.
\begin{theorem}\label{thm:1.5}
 If $\kappa \in C^{d } ([a, b] \times [a, b])$ and $ f \in C^{d} [a, b],$ then
\begin{equation}\label{eq:3.29}
\|\varphi - z_n^M \|_\infty = O \left ( \max \left \{ \tilde{h}^{d}, h^{3 r} \right \} \right ).
\end{equation}
If $\kappa \in C^{\max \{3 r, d \} } ([a, b] \times [a, b])$ and $ f \in C^{d} [a, b],$ then
\begin{equation}\label{eq:3.30}
\|\varphi - \tilde{z}_n^M \|_\infty = O \left ( \max \left \{ \tilde{h}^{d}, h^{4 r} \right \} \right ).
\end{equation}
\end{theorem}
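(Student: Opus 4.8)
The plan is to reduce both bounds to the Nyström estimate (\ref{eq:2.11}) and the superconvergence Proposition \ref{prop:3.4}, exploiting that $\varphi$ and $z_n^M$ solve equations with the \emph{same} right-hand side $f$, the identity $\mathcal{K}_m - \tilde{\mathcal{K}}_n^M = (I - Q_n)\mathcal{K}_m(I - Q_n)$, and the uniform resolvent bound $\|(I - \tilde{\mathcal{K}}_n^M)^{-1}\| \leq 2C_2$ from (\ref{***}). Observe first that $\kappa, f \in C^d$ with $d \geq 2r$ give $\varphi \in C^d \subseteq C^{2r}$, so the hypotheses of Proposition \ref{prop:3.4} are met in both cases.

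For (\ref{eq:3.29}): since $\varphi = \mathcal{K}\varphi + f$ and $z_n^M = \tilde{\mathcal{K}}_n^M z_n^M + f$, subtracting and inserting $\tilde{\mathcal{K}}_n^M\varphi$ yields $(I - \tilde{\mathcal{K}}_n^M)(\varphi - z_n^M) = (\mathcal{K} - \tilde{\mathcal{K}}_n^M)\varphi$, hence
\[
\varphi - z_n^M = (I - \tilde{\mathcal{K}}_n^M)^{-1}\big[(\mathcal{K} - \mathcal{K}_m)\varphi + (I - Q_n)\mathcal{K}_m(I - Q_n)\varphi\big].
\]
The first summand is $O(\tilde h^d)$ by (\ref{eq:2.11}) and the second is $O(h^{3r})$ by (\ref{eq:3.26}); multiplying by the uniformly bounded resolvent gives (\ref{eq:3.29}).

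For (\ref{eq:3.30}): write $\varphi - \tilde z_n^M = \mathcal{K}\varphi - \mathcal{K}_m z_n^M = (\mathcal{K} - \mathcal{K}_m)\varphi + \mathcal{K}_m(\varphi - z_n^M)$. The first term is $O(\tilde h^d)$. For the second I would substitute the error equation from the previous step; the contribution of $(\mathcal{K} - \mathcal{K}_m)\varphi$ passes through the bounded operators $\mathcal{K}_m$ and $(I - \tilde{\mathcal{K}}_n^M)^{-1}$ and stays $O(\tilde h^d)$. The remaining piece is $\mathcal{K}_m(I - \tilde{\mathcal{K}}_n^M)^{-1}(I - Q_n)\mathcal{K}_m(I - Q_n)\varphi$, and here I would use the resolvent identity $(I - \tilde{\mathcal{K}}_n^M)^{-1} = I + (I - \tilde{\mathcal{K}}_n^M)^{-1}\tilde{\mathcal{K}}_n^M$ together with $\tilde{\mathcal{K}}_n^M(I - Q_n) = Q_n\mathcal{K}_m(I - Q_n)$ (valid since $Q_n(I - Q_n) = 0$) to rewrite it as
\[
\mathcal{K}_m(I - Q_n)\mathcal{K}_m(I - Q_n)\varphi + \mathcal{K}_m(I - \tilde{\mathcal{K}}_n^M)^{-1}Q_n\mathcal{K}_m(I - Q_n)\mathcal{K}_m(I - Q_n)\varphi.
\]
In both terms the composition $\mathcal{K}_m(I - Q_n)\mathcal{K}_m(I - Q_n)\varphi$ appears, which is $O(h^{4r})$ by (\ref{eq:3.27}) (needing $\kappa \in C^{3r}$, supplied by $\kappa \in C^{\max\{3r,d\}}$); the surrounding operators are uniformly bounded, so this piece is $O(h^{4r})$ and (\ref{eq:3.30}) follows.

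The main obstacle is precisely this last estimate: the naive bound $\|\mathcal{K}_m(\varphi - z_n^M)\| \leq \|\mathcal{K}_m\|\,\|\varphi - z_n^M\|$ returns only $O(\max\{\tilde h^d, h^{3r}\})$ and forfeits the iterated gain. The essential device is the algebraic rearrangement above, which converts the single factor $(I - Q_n)\mathcal{K}_m(I - Q_n)$ into the twice-iterated factor $\mathcal{K}_m(I - Q_n)\mathcal{K}_m(I - Q_n)$ governed by Proposition \ref{prop:3.4}, thereby upgrading $h^{3r}$ to $h^{4r}$.
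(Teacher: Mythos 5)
Your proof is correct, and the two bounds deserve separate verdicts. For (\ref{eq:3.29}) your argument is exactly the paper's: the identity $\varphi - z_n^M = (I - \tilde{\mathcal{K}}_n^M)^{-1}(\mathcal{K}-\tilde{\mathcal{K}}_n^M)\varphi$, the splitting through $\mathcal{K}_m$, the bound (\ref{***}), and the estimates (\ref{eq:2.11}) and (\ref{eq:3.26}). For (\ref{eq:3.30}) you take a genuinely different route. The paper pivots through the Nystr\"{o}m solution $\varphi_m$: it writes $\varphi_m - \tilde z_n^M = (I-\mathcal{K}_m)^{-1}\mathcal{K}_m(I-Q_n)\mathcal{K}_m(I-Q_n)z_n^M$ (commuting $\mathcal{K}_m$ past $(I-\mathcal{K}_m)^{-1}$), splits $z_n^M = \varphi + (z_n^M-\varphi)$, bounds the $\varphi$ part by (\ref{eq:3.27}) and the remainder by the operator-norm estimate (\ref{eq:3.25}) times (\ref{eq:3.29}), and finishes with the triangle inequality and (\ref{eq:3.6}). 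You instead stay with the error representation against $\varphi$ itself and with the resolvent $(I-\tilde{\mathcal{K}}_n^M)^{-1}$, and manufacture the doubled factor $\mathcal{K}_m(I-Q_n)\mathcal{K}_m(I-Q_n)$ via the resolvent identity together with the projection identity $\tilde{\mathcal{K}}_n^M(I-Q_n)=Q_n\mathcal{K}_m(I-Q_n)$ (which is legitimate, since $Q_n$ is a projection). Both arguments rest on the same engine, namely (\ref{eq:3.27}); what yours buys is that you need neither Proposition \ref{prop:3.3} nor the intermediate convergence $\|\varphi-\varphi_m\|_\infty=O(\tilde h^d)$, only uniform boundedness of $\mathcal{K}_m$, $Q_n$ and the resolvent. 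What the paper's route buys is the slightly stronger intermediate statement $\|\varphi_m-\tilde z_n^M\|_\infty=O(h^{4r})$, i.e. superconvergence of $\tilde z_n^M$ to the Nystr\"{o}m solution independently of $\tilde h$, and it is this template (comparison with $\varphi_m$) that the paper reuses in the nonlinear analysis of Section 5. Your identification of the naive bound $\|\mathcal{K}_m(\varphi-z_n^M)\|\le\|\mathcal{K}_m\|\,\|\varphi-z_n^M\|$ as the obstacle, and of the algebraic rearrangement as the essential device, is exactly right.
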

\begin{proof}

Since
\begin{eqnarray}\nonumber
\varphi - z_n^M & =&  (I - \mathcal{K})^{-1} f -  (I - \tilde {\mathcal{K}}_n^M )^{-1} f 
 =   (I - \tilde {\mathcal{K}}_n^M)^{-1} (   \mathcal{K} - \tilde {\mathcal{K}}_n^M) \varphi,
\end{eqnarray}
using (\ref{***}) we obtain,
\begin{eqnarray}\nonumber
\|\varphi - z_n^M\|_\infty  & \leq& 2 C_2 \left  (\|   \mathcal{K}  \varphi - \mathcal{K}_m     \varphi \|_\infty +  \| ( \mathcal{K}_m - 
\tilde {\mathcal{K}}_n^M ) \varphi \|_\infty \right )\\\nonumber
& \leq & 2 C_2 \left  (\|   \mathcal{K}  \varphi - \mathcal{K}_m     \varphi \|_\infty + \| (I - Q_n) \mathcal{K}_m (I -   Q_n ) \varphi \|_\infty \right ).
\end{eqnarray}
Using the estimates (\ref{eq:2.11}) and (\ref{eq:3.26}) we obtain,
\begin{eqnarray}\nonumber
\|\varphi - z_n^M\|_\infty  & =&  O \left ( \max \left \{ \tilde{h}^{d}, h^{3 r} \right \} \right ),
\end{eqnarray}
which completes the proof of (\ref{eq:3.29}).

Note that
\begin{eqnarray}\nonumber
\varphi_m - z_n^M & =&  
   (I -  {\mathcal{K}}_m)^{-1} (   \mathcal{K}_m - \tilde {\mathcal{K}}_n^M) z_n^M\\\nonumber
& = & (I -  {\mathcal{K}}_m)^{-1}   (I - Q_n) \mathcal{K}_m (I - Q_n)   z_n^M .
\end{eqnarray}
Hence
\begin{eqnarray}\nonumber
\varphi_m - \tilde {z}_n^M & =&  \mathcal{K}_m  \varphi_m - \mathcal{K}_m z_n^M \\\nonumber
& = & (I -  {\mathcal{K}}_m)^{-1}   \mathcal{K}_m (I - Q_n) \mathcal{K}_m (I - Q_n)   z_n^M\\\nonumber
& = &  (I -  {\mathcal{K}}_m)^{-1}   \mathcal{K}_m (I - Q_n) \mathcal{K}_m (I - Q_n) (z_n^M - \varphi )
\\\nonumber
& + & (I -  {\mathcal{K}}_m)^{-1}   \mathcal{K}_m (I - Q_n) \mathcal{K}_m (I - Q_n) \varphi 
\end{eqnarray}
Thus,
\begin{eqnarray}\nonumber
\|\varphi_m - \tilde {z}_n^M \|_\infty & \leq & C_2 (1 + \|Q_n \| ) \|\mathcal{K}_m (I - Q_n) \mathcal{K}_m\| \|z_n^M - \varphi\|_\infty \\\nonumber
& + & C_2 \|\mathcal{K}_m (I - Q_n) \mathcal{K}_m (I - Q_n) \varphi \|_\infty.
\end{eqnarray}
Using  (\ref{eq:3.25}),  (\ref{eq:3.27}) and (\ref{eq:3.29}) and the fact that $d \geq 2 r,$ it can be seen that
$$ \|\varphi_m - \tilde {z}_n^M \|_\infty = O (h^{4 r}).$$
Hence using (\ref{eq:3.6}) we obtain,
\begin{eqnarray*}
\|\varphi - \tilde {z}_n^M \|_\infty  \leq \|\varphi - \varphi_m \|_\infty + \|\varphi_m - \tilde {z}_n^M \|_\infty = 
 O \left ( \max \left \{ \tilde{h}^{d}, h^{4 r} \right \} \right ),
\end{eqnarray*}
which completes the proof.
\end{proof}


\setcounter{equation}{0}
\section {Discrete Modified Projection Method for Urysohn Integral Equations}

In this section we consider approximation of the Urysohn integral equation (\ref{eq:1.1})-(\ref{eq:1.2}) by the discrete version 
of the modified projection  method.  For a fixed $\delta > 0,$ a closed neighbourhood $B (\varphi, \delta)$ of the exact solution $\varphi$ is defined in 
(\ref{eq:2.13}). First we prove a result about the Nystr\"{o}m operator $\mathcal{K}_m$
defined in (\ref{eq:2.8}).
Let  $\displaystyle { \frac {\partial^2 \kappa} {\partial u^2} \in C (\Omega)}$ and define
\begin{equation} \label{eq:4.1}
C_{6} = \max_{\stackrel {s, t \in [a, b]}{|u| \leq \|\varphi \|_\infty + \delta }}
\left | \frac {\partial^{2} \kappa } {\partial u^2} (s, t, u) \right |.
\end{equation}

\begin{proposition}\label{prop:4.1}
Let $\displaystyle { \frac {\partial^2 \kappa} {\partial u^2} \in C (\Omega).}$
Then for $v_1, v_2 \in B (\varphi, \delta)$ and for $s \in [a, b],$
\begin{eqnarray}\nonumber
\mathcal{K}_m (  v_2 ) (s) -  \mathcal{K}_m ( v_1 ) (s) - \mathcal{K}_m' ( v_1) (v_2 - v_1) (s) &= & 
 R  (v_2 - v_1)  (s), 
 \end{eqnarray} 
where
\begin{eqnarray}\nonumber
\| R  (v_2 - v_1) \|_\infty \leq  \frac {C_{6} (b - a)} {2}  \left ( \sum_{i=1}^\rho | w_i | \right ) \|v_2 - v_1\|_\infty^2.
\end{eqnarray}
\end{proposition}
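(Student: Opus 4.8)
The plan is to reduce the operator identity to a pointwise, term-by-term second-order Taylor expansion of the kernel in its third argument. Fix $s \in [a,b]$ and use the explicit formulas (\ref{eq:2.8}) and (\ref{eq:2.9}) for $\mathcal{K}_m$ and $\mathcal{K}_m'$. Subtracting, the left-hand side collapses into a single weighted quadrature sum over the nodes $\zeta_i^j$ whose generic summand is
\[
\kappa\bigl(s,\zeta_i^j,v_2(\zeta_i^j)\bigr) - \kappa\bigl(s,\zeta_i^j,v_1(\zeta_i^j)\bigr) - \frac{\partial \kappa}{\partial u}\bigl(s,\zeta_i^j,v_1(\zeta_i^j)\bigr)\bigl(v_2(\zeta_i^j)-v_1(\zeta_i^j)\bigr).
\]
I would then apply Taylor's theorem with the Lagrange form of the remainder to the one-variable map $u \mapsto \kappa(s,\zeta_i^j,u)$, expanding about $v_1(\zeta_i^j)$ and evaluating at $v_2(\zeta_i^j)$. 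This rewrites each summand as $\tfrac{1}{2}\frac{\partial^2 \kappa}{\partial u^2}(s,\zeta_i^j,\eta_{i,j})\bigl(v_2(\zeta_i^j)-v_1(\zeta_i^j)\bigr)^2$ for some $\eta_{i,j}$ lying between $v_1(\zeta_i^j)$ and $v_2(\zeta_i^j)$, and this collection of weighted terms is precisely what we name $R(v_2-v_1)(s)$.

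The next step is a uniform bound on the second derivative. Since $v_1,v_2 \in B(\varphi,\delta)$, every value $v_1(\zeta_i^j)$ and $v_2(\zeta_i^j)$ lies in the interval $[-(\|\varphi\|_\infty+\delta),\ \|\varphi\|_\infty+\delta]$; because $\eta_{i,j}$ lies between these two values, it satisfies the same bound $|\eta_{i,j}| \le \|\varphi\|_\infty+\delta$. Hence, by the definition (\ref{eq:4.1}) of $C_6$, one has $\bigl|\frac{\partial^2 \kappa}{\partial u^2}(s,\zeta_i^j,\eta_{i,j})\bigr| \le C_6$ for every node and every $s$.

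Finally I would assemble the estimate. Bounding $|v_2(\zeta_i^j)-v_1(\zeta_i^j)|^2 \le \|v_2-v_1\|_\infty^2$ and pulling the constants outside the sum gives
\[
\bigl|R(v_2-v_1)(s)\bigr| \le \frac{C_6}{2}\,\|v_2-v_1\|_\infty^2\;\tilde h\sum_{j=1}^m\sum_{i=1}^\rho |w_i|,
\]
and the weighted node count simplifies, using $m\tilde h = b-a$, as
\[
\tilde h\sum_{j=1}^m\sum_{i=1}^\rho |w_i| = m\,\tilde h\sum_{i=1}^\rho |w_i| = (b-a)\sum_{i=1}^\rho |w_i|.
\]
Taking the supremum over $s \in [a,b]$ then yields the claimed bound. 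I do not expect a serious obstacle here: the entire content is a scalar second-order Taylor remainder inserted inside the quadrature sum, so the only point demanding a little care is verifying that the intermediate argument $\eta_{i,j}$ stays within the range controlling $C_6$, which is immediate from its lying between two values in $B(\varphi,\delta)$.
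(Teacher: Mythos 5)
Your proof is correct. It differs from the paper's in the form of the Taylor remainder: the paper invokes the generalized (operator-level) Taylor theorem, writing $R(v_2-v_1)(s)=\int_0^1(1-\theta)\,\mathcal{K}_m''\bigl(v_1+\theta(v_2-v_1)\bigr)(v_2-v_1)^2(s)\,d\theta$ and then substituting the explicit formula (2.10) for the second Fr\'echet derivative, so the factor $\tfrac12$ arises from $\int_0^1(1-\theta)\,d\theta$; you instead apply the scalar Lagrange-form remainder to $u\mapsto\kappa(s,\zeta_i^j,u)$ separately at each quadrature node, so the $\tfrac12$ appears directly in each summand. Both routes land on the identical bound, since in either case the second derivative of $\kappa$ is evaluated at arguments of modulus at most $\|\varphi\|_\infty+\delta$ (the convex combination $v_1+\theta(v_2-v_1)$ in the paper, the intermediate point $\eta_{i,j}$ in yours) and is therefore controlled by $C_6$, after which $m\tilde h=b-a$ finishes the count. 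Your version is slightly more elementary --- it needs only pointwise $C^2$ smoothness of $\kappa$ in $u$ and never touches Fr\'echet differentiability or formula (2.10) --- whereas the paper's operator-level formulation is the one it reuses verbatim later (e.g.\ in Propositions 4.5 and 5.5, where the integral form of $R$ is differentiated in $s$), which is presumably why the authors set it up that way.
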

\begin{proof}
If $v_1, v_2 \in B (\varphi, \delta),$ then by the generalized Taylor's theorem,
\begin{eqnarray}\nonumber
\mathcal{K}_m (  v_2 ) (s) -  \mathcal{K}_m ( v_1 ) (s) - \mathcal{K}_m' ( v_1) (v_2 - v_1) (s) &= & 
 R  (v_2 - v_1)  (s), \; s \in [a, b], 
 \end{eqnarray} 
 where
 \begin{eqnarray}\label{eq:4.3}
&&R  (v_2 - v_1)  (s)
 = \int_0^1  {(1 - \theta) }   \mathcal {K}_m^{''} \left (v_1 + \theta  
 (v_2 - v_1) \right ) (v_2 - v_1)^2  (s)    d \theta.
\end{eqnarray}
 For $s \in [a, b]$ and $ \theta \in [0, 1],$ define
\begin{eqnarray}\nonumber
(S_\theta  (v_2 - v_1) ) (s) & =&  \mathcal {K}_m^{''} \left (v_1 + \theta
 (v_2 - v_1) \right ) (v_2 - v_1)^2 (s)\\\nonumber
& = & \tilde {h}  \sum_{j=1}^m  \sum_{i=1}^\rho  w_i \;  \frac {\partial^2 \kappa } {\partial u^2} \left (s,  \zeta_i^j, 
v_1  (\zeta_i^j) + \theta
 (v_2 - v_1)  (\zeta_i^j) \right ) (v_2 - v_1)^2 (\zeta_i^j).\\\label{eq:4.4}
\end{eqnarray}
Then 
\begin{eqnarray}\nonumber
\|S_\theta  (v_2 - v_1) \|_\infty \leq C_{6 } (b - a) \left ( \sum_{i=1}^\rho | w_i | \right )  \|v_2 - v_1\|_\infty^2.
\end{eqnarray}
Since
 \begin{eqnarray}\nonumber
R   (v_2 - v_1)  (s) 
&=& \int_0^1    (1 - \theta)   S_\theta  (v_2 - v_1)  (s)  d \theta,
\end{eqnarray}
it follows that
\begin{equation}\nonumber
\| R   (v_2 - v_1) \|_\infty \leq  \frac {C_{6} (b - a)} {2}  \left ( \sum_{i=1}^\rho | w_i | \right )  \|v_2 - v_1\|_\infty^2.
\end{equation}
This completes the proof. \end{proof}


\begin{remark}
 Note that
\begin{eqnarray}
\mathcal{K} ' (\varphi ) v (s) =  \int_a^b \frac {\partial \kappa } {\partial u} (s, t, \varphi (t)) v (t) \; d t,
\end{eqnarray}
whereas
\begin{equation}\label{****}
\mathcal {K}_m'  (\varphi) v (s)  =  \tilde {h}  \sum_{j=1}^m  \sum_{i=1}^\rho  w_i \;  \frac {\partial \kappa } {\partial u} (s,  \zeta_i^j, \varphi (\zeta_i^j)) v (\zeta_i^j), \;\;\; s \in [a, b].
\end{equation}
Thus,
$\mathcal {K}_m'  (\varphi)$  is the Nystr\"{o}m approximation of the linear operator $\mathcal{K} ' (\varphi ) $  associated with a convergent quadrature formula. Hence
$\mathcal {K}_m'  (\varphi)$ converges to $\mathcal{K} ' (\varphi ) $ pointwise and $\mathcal{K}_m ' (\varphi ) $ is a collectively compact family.

As before, now onwards  we assume that 
$$ m = n p \;\;\; \mbox { for some} \;\;\;  p \in \N.$$
It follows that
\begin{eqnarray}\label{*}
 \|(I - Q_n) \mathcal{K}'_m (\varphi) \| \rightarrow 0\; \mbox{as} \; n \rightarrow \infty.
\end{eqnarray}
It is assumed that 
$ (I - \mathcal{K}' (\varphi ))^{-1}: C[a, b] \rightarrow C [a, b]$  is a bounded linear operator. 
Hence
there exists a positive integer $m_1 \geq m_0$ such that for $m \geq m_1,$
$ (I - \mathcal{K}'_m (\varphi ))^{-1} $ exists and
\begin{equation}\label{eq:4.6}
 \left \|(I - \mathcal{K}'_m (\varphi ))^{-1} \right \| \leq C_{7}. 
\end{equation}
See Atkinson \cite {Atk3}.
\end{remark}

We prove some preliminary results which are needed to obtain the order of convergence of the discrete modified projection solution $z_n^M.$
\begin{proposition}\label{prop:4.2}
Let $\displaystyle { \frac {\partial^2 \kappa} {\partial u^2} \in C (\Omega).}$ Then $\mathcal{K}'_m$ is Lipschitz continuous in  $B (\varphi, \delta):$
\begin{equation}\label{eq:4.7}
 \|\mathcal{K}'_m (x) -  \mathcal{K}'_m (y) \| \leq \gamma\| x - y \|_\infty , \; \; \;  x, y \in B (\varphi, \delta),
\end{equation}
where 
$\gamma$ is a constant independent of $m.$
\end{proposition}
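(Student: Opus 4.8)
The plan is to work directly from the explicit formula (\ref{eq:2.9}) for the Fr\'echet derivative of the Nystr\"om operator. For $x, y \in B(\varphi, \delta)$ and any $v \in C[a,b]$, I would write
\begin{eqnarray}\nonumber
\left( \mathcal{K}'_m(x) - \mathcal{K}'_m(y) \right) v (s) = \tilde{h} \sum_{j=1}^m \sum_{i=1}^\rho w_i \left[ \frac{\partial \kappa}{\partial u}\left(s, \zeta_i^j, x(\zeta_i^j)\right) - \frac{\partial \kappa}{\partial u}\left(s, \zeta_i^j, y(\zeta_i^j)\right) \right] v(\zeta_i^j),
\end{eqnarray}
so that the whole problem reduces to estimating the bracketed difference of first partials pointwise.

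The key step is to control that difference using the hypothesis $\partial^2\kappa/\partial u^2 \in C(\Omega)$. Since $x, y \in B(\varphi,\delta)$, the sample values $x(\zeta_i^j)$ and $y(\zeta_i^j)$ all satisfy $|u| \leq \|\varphi\|_\infty + \delta$, which is exactly the range on which the constant $C_6$ of (\ref{eq:4.1}) bounds $|\partial^2\kappa/\partial u^2|$. Applying the mean value theorem in the third variable therefore gives
\begin{eqnarray}\nonumber
\left| \frac{\partial \kappa}{\partial u}\left(s, \zeta_i^j, x(\zeta_i^j)\right) - \frac{\partial \kappa}{\partial u}\left(s, \zeta_i^j, y(\zeta_i^j)\right) \right| \leq C_6 \left| x(\zeta_i^j) - y(\zeta_i^j) \right| \leq C_6 \|x - y\|_\infty,
\end{eqnarray}
with the intermediate argument still lying in the admissible range by convexity of the interval.

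Substituting this bound and using $|v(\zeta_i^j)| \leq \|v\|_\infty$, the estimate collapses to $\tilde h \sum_{j=1}^m \sum_{i=1}^\rho |w_i|$ times $C_6 \|x-y\|_\infty \|v\|_\infty$. The decisive observation, and the only place where one must be careful to obtain an $m$-independent constant, is that $\tilde h \sum_{j=1}^m \sum_{i=1}^\rho |w_i| = \tilde h\, m \left( \sum_{i=1}^\rho |w_i| \right) = (b-a) \left( \sum_{i=1}^\rho |w_i| \right)$, since $\tilde h\, m = b - a$. Taking the supremum over $s \in [a,b]$ and over $\|v\|_\infty \leq 1$ then yields (\ref{eq:4.7}) with the explicit constant
\begin{eqnarray}\nonumber
\gamma = C_6\,(b-a)\left( \sum_{i=1}^\rho |w_i| \right),
\end{eqnarray}
which is independent of $m$ because it involves only the finitely many weights of the basic rule (\ref{eq:2.4}). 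There is no serious obstacle here; the argument mirrors the remainder estimate already carried out in Proposition \ref{prop:4.1}, the only subtlety being to confirm the sample values remain in the region where $C_6$ is valid and to exploit $\tilde h\, m = b-a$ to absorb all $m$-dependence.
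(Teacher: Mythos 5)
Your proposal is correct and follows essentially the same route as the paper: apply the explicit formula for $\mathcal{K}'_m$, use the mean value theorem in the third variable together with the bound $C_6$ on $\partial^2\kappa/\partial u^2$ over the admissible range, and absorb the $m$-dependence via $\tilde h\, m = b-a$, arriving at the same constant $\gamma = C_6 (b-a)\left(\sum_{i=1}^\rho |w_i|\right)$. No gaps.
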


\begin{proof}
For $ x , y \in B (\varphi, \delta), $
\begin{eqnarray*}
\|  \mathcal{K}'_m (x) -  \mathcal{K}'_m (y) \| 
& = & \sup_{\|v\| \leq 1 } \sup_{s \in [a, b] } \left | \left ( \mathcal{K}'_m (x) -  \mathcal{K}'_m (y) \right ) v (s) \right |
\end{eqnarray*}
For $s \in [a, b],$ we have
\begin{eqnarray*}
\left ( \mathcal{K}'_m (x) -  \mathcal{K}'_m (y) \right ) v (s) & = & \tilde {h}  \sum_{j=1}^m  \sum_{i=1}^\rho  w_i \;  
\left ( \frac {\partial \kappa } {\partial u} (s,  \zeta_i^j, x (\zeta_i^j)) - \frac {\partial \kappa } {\partial u} (s,  \zeta_i^j, y (\zeta_i^j)) \right ) 
v (\zeta_i^j).
\end{eqnarray*}
By the Mean Value Theorem,
\begin{eqnarray*}
\left ( \mathcal{K}'_m (x) -  \mathcal{K}'_m (y) \right ) v (s) 
& = & \tilde {h}  \sum_{j=1}^m  \sum_{i=1}^\rho  w_i \;  
\frac {\partial^2 \kappa } {\partial u^2} (s,  \zeta_i^j, \eta_i^j) \left (x (\zeta_i^j) - y (\zeta_i^j) \right)
v (\zeta_i^j),
\end{eqnarray*}
where $$\eta_i^j = \theta_i^j x (\zeta_i^j) + (1 - \theta_i^j ) y (\zeta_i^j) \; \mbox { for some} \;  \theta_i^j  \in (0, 1).$$
Since  $ x , y \in B (\varphi, \delta), $ it follows that 
$$|\eta_i^j | \leq \|\varphi \|_\infty + \delta, \;\;\;  i = 1, \ldots, \rho, \; j = 1, \ldots, m.$$
Hence for $ s \in [a, b],$
\begin{eqnarray*}
\left | \left ( \mathcal{K}'_m (x) -  \mathcal{K}'_m (y) \right ) v (s) \right | &\leq&  C_{6}  \tilde {h} m  \left ( \sum_{i=1}^\rho  |w_i |  \right ) 
\|x - y \|_\infty \|v\|_\infty \\
&=&  \gamma \|x - y \|_\infty \|v\|_\infty,
\end{eqnarray*}
where
$$\gamma = C_{6} (b - a)  \left ( \sum_{i=1}^\rho  |w_i |  \right ), $$
 $C_{6}$ being defined in (\ref{eq:4.1}).
Hence
\begin{eqnarray}\nonumber
 \|\left ( \mathcal{K}'_m (x) -  \mathcal{K}'_m (y) \right ) v \|_\infty  \leq \gamma\| x - y \|_\infty \|v\|_\infty. 
\end{eqnarray}
Taking the supremum over the set $\{ v \in C[a, b]: \|v\|_\infty \leq 1 \},$
the required result follows.
\end{proof}

\begin{proposition}\label{prop:4.3}
Let $\displaystyle { \frac {\partial^2 \kappa} {\partial u^2} \in C (\Omega).}$  There exists a positive integer $n_1$ such that for $n \geq n_1$ and for $ m \geq m_1,$ $I - \left (\tilde{\mathcal{K}}_n^M \right )' (\varphi)$ is invertible
and
\begin{eqnarray}\label{eq:4.8}
\left \|\left (  I - \left (\tilde{\mathcal{K}}_n^M \right )'  (\varphi) \right )^{-1} \right \| \leq 2 \; C_{7}.
\end{eqnarray}
\end{proposition}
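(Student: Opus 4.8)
The plan is to reduce the statement to the linear theory of Section 3.1 applied to the Nystr\"{o}m operator $\mathcal{K}_m'(\varphi)$, and then to absorb the genuinely nonlinear discrepancy as a small perturbation handled by a Neumann series about $I - \mathcal{K}_m'(\varphi)$, whose inverse is already controlled by (\ref{eq:4.6}).

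First I would compute the Fr\'echet derivative of $\tilde{\mathcal{K}}_n^M$ at $\varphi$. Since $Q_n$ is linear, differentiating (\ref{eq:2.23}) gives, for $v \in C[a,b]$,
$$\left(\tilde{\mathcal{K}}_n^M\right)'(\varphi) v = Q_n \mathcal{K}_m'(\varphi) v + \mathcal{K}_m'(Q_n\varphi) Q_n v - Q_n \mathcal{K}_m'(Q_n\varphi) Q_n v.$$
Were the argument $Q_n\varphi$ in the last two terms replaced by $\varphi$, this would be exactly the linear modified projection operator $Q_n A + A Q_n - Q_n A Q_n$ associated with the bounded operator $A := \mathcal{K}_m'(\varphi)$, whose behaviour is governed by the arguments of Section 3.1. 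Thus the entire difficulty is concentrated in the mismatch between $\mathcal{K}_m'(\varphi)$ and $\mathcal{K}_m'(Q_n\varphi)$, and I expect this to be the main obstacle: one must recognize that the derivative is \emph{not} literally the linear modified projection of $\mathcal{K}_m'(\varphi)$, and must control the extra contribution uniformly in the admissible $m = np$.

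Next I would establish the algebraic identity
$$\mathcal{K}_m'(\varphi) - \left(\tilde{\mathcal{K}}_n^M\right)'(\varphi) = (I - Q_n)\mathcal{K}_m'(\varphi)(I - Q_n) + (I - Q_n)\bigl(\mathcal{K}_m'(\varphi) - \mathcal{K}_m'(Q_n\varphi)\bigr) Q_n,$$
obtained by first writing the left side as $(I - Q_n)\bigl[\mathcal{K}_m'(\varphi) - \mathcal{K}_m'(Q_n\varphi) Q_n\bigr]$ and then inserting $\pm\,\mathcal{K}_m'(\varphi) Q_n$. The first term on the right is the genuine linear modified-projection remainder, and by the convergence $\|(I - Q_n)\mathcal{K}_m'(\varphi)\| \to 0$ noted in the Remark (equation (*)) together with $\|I - Q_n\| \leq 1 + q$, its norm is at most $(1+q)\|(I - Q_n)\mathcal{K}_m'(\varphi)\| \to 0$. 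For the second term, once $n$ is large enough that $Q_n\varphi \in B(\varphi,\delta)$ (possible since $\|\varphi - Q_n\varphi\|_\infty \to 0$ because $\varphi \in C[a,b]$), Proposition \ref{prop:4.2} yields
$$\left\|(I - Q_n)\bigl(\mathcal{K}_m'(\varphi) - \mathcal{K}_m'(Q_n\varphi)\bigr) Q_n\right\| \leq (1+q)\,\gamma\,q\,\|\varphi - Q_n\varphi\|_\infty \to 0.$$
Since $\gamma$ and the constant in (*) are independent of $m$, this shows $\|\mathcal{K}_m'(\varphi) - (\tilde{\mathcal{K}}_n^M)'(\varphi)\| \to 0$ as $n \to \infty$, uniformly over $m = np$.

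Finally I would run the perturbation argument. By (\ref{eq:4.6}), $\|(I - \mathcal{K}_m'(\varphi))^{-1}\| \leq C_7$ for $m \geq m_1$. Writing
$$I - \left(\tilde{\mathcal{K}}_n^M\right)'(\varphi) = \bigl(I - \mathcal{K}_m'(\varphi)\bigr)\left[I - \bigl(I - \mathcal{K}_m'(\varphi)\bigr)^{-1}\Bigl(\mathcal{K}_m'(\varphi) - \left(\tilde{\mathcal{K}}_n^M\right)'(\varphi)\Bigr)\right],$$
I would choose $n_1$ so large that for all $n \geq n_1$ (so that $m = np \geq m_1$) the quantity $\|\mathcal{K}_m'(\varphi) - (\tilde{\mathcal{K}}_n^M)'(\varphi)\| \leq 1/(2C_7)$. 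Then the bracketed factor is invertible with inverse of norm at most $2$ by the Neumann series, and combining the two bounds gives $\|(I - (\tilde{\mathcal{K}}_n^M)'(\varphi))^{-1}\| \leq 2C_7$, which is precisely (\ref{eq:4.8}).
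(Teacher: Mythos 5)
Your proposal is correct and follows essentially the same route as the paper's own proof: the same formula for $\left(\tilde{\mathcal{K}}_n^M\right)'(\varphi)$, the same two-term decomposition of $\mathcal{K}_m'(\varphi) - \left(\tilde{\mathcal{K}}_n^M\right)'(\varphi)$ controlled via (\ref{*}) and Proposition \ref{prop:4.2}, and the same Neumann-series perturbation of $I - \mathcal{K}_m'(\varphi)$ using (\ref{eq:4.6}). The only difference is the (immaterial) order of the factors in the final factorization.
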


\begin{proof}

\noindent
Fix $m = n p \geq m_1.$ Then by (\ref{eq:4.6})
$$ \left \| (I - \mathcal{K}_m' (\varphi ))^{-1} \right \| \leq C_{7}. $$
From the definition of $\tilde{\mathcal{K}}_n^M (x)$ in (\ref{eq:2.23}), it follows that
\begin{equation}\nonumber
\left (\tilde{\mathcal{K}}_n^M \right )' (x) = Q_n  \mathcal{K}_m' (x) + (I - Q_n)  \mathcal{K}_m' (Q_n x) Q_n.
\end{equation}
 Hence
\begin{eqnarray*}
 \mathcal{K}_m' (\varphi ) -  \left (\tilde{\mathcal{K}}_n^M \right )' (\varphi)& = &  (I - Q_n) \mathcal{K}_m' (\varphi ) - (I - Q_n) \mathcal{K}_m' (Q_n \varphi ) Q_n\\
& = & (I - Q_n) \mathcal{K}_m' (\varphi ) (I - Q_n) + (I - Q_n) (\mathcal{K}_m' (\varphi) - \mathcal{K}_m' (Q_n \varphi ) )Q_n.
\end{eqnarray*}
Since $\varphi \in C [a, b],$ it follows that 
$$\|Q_n \varphi - \varphi \|_\infty \rightarrow 0 \; \mbox {as} \; n \rightarrow \infty.$$
Hence there exists a positive integer $n_0$ such that
$$ n \geq n_0 \Rightarrow Q_n \varphi \in B (\varphi, \delta).$$
By Proposition \ref{prop:4.2},
$$ \|\mathcal{K}_m' (\varphi) - \mathcal{K}_m' (Q_n \varphi )\| \leq \gamma \|\varphi - Q_n \varphi\|_\infty \rightarrow 0$$
and by (\ref{*})
$$ \|(I - Q_n) \mathcal{K}_m' (\varphi )\| \rightarrow 0 \; \mbox{as} \; n \rightarrow \infty.$$
Hence for $ n \geq n_0,$
\begin{eqnarray*}
 \left \| \mathcal{K}_m' (\varphi ) -  \left (\tilde{\mathcal{K}}_n^M \right )' (\varphi) \right \| &\leq & 
( 1 + \|Q_n\|)  \|(I - Q_n) \mathcal{K}_m' (\varphi )\| \\
&+& \| Q_n \| (1 + \|Q_n \|)  \|\mathcal{K}_m' (\varphi) - \mathcal{K}_m' (Q_n \varphi )\|\\
&\leq & (1 + q) \|(I - Q_n) \mathcal{K}_m' (\varphi )\| + q (1 + q) \gamma \|\varphi - Q_n \varphi\|_\infty.  
\end{eqnarray*}
Thus,
\begin{eqnarray*}
 \left \| \mathcal{K}_m' (\varphi ) -  \left (\tilde{\mathcal{K}}_n^M \right )' (\varphi) \right \| 
& \rightarrow & 0 \; \mbox{as} \; n \rightarrow \infty.
\end{eqnarray*}
Choose $n_1 \geq n_0$ such that
$$ n \geq n_1 \Rightarrow \left \| \left (\tilde{\mathcal{K}}_n^M \right )'  (\varphi) - \mathcal{K}_m' (\varphi ) \right \| 
\leq 
\frac {1} {2 \; C_{7}}.$$
Since
\begin{eqnarray*}
 I - \left (\tilde{\mathcal{K}}_n^M \right )'  (\varphi) 
&=& \left [I - \left \{ \left (\tilde{\mathcal{K}}_n^M \right )'  (\varphi) - \mathcal{K}_m' (\varphi )\right \}  (I - \mathcal{K}_m' (\varphi ))^{-1} \right ]
 (I - \mathcal{K}_m' (\varphi )),
\end{eqnarray*}
it follows that  for $ n \geq n_1,$ 
$$ \left \| \left (  I - \left (\tilde{\mathcal{K}}_n^M \right )'  (\varphi) \right )^{-1} \right \| \leq 2 \;
\left \| (I - \mathcal{K}_m' (\varphi ))^{-1} \right \| \leq 2 \; C_{7}.$$
This completes the proof.
\end{proof}
\begin{remark}\label{remark:4.4}

For $ n \geq n_1$ and $m \geq m_1,$ define
\begin{eqnarray}\label{eq:4.10}
B_n (x) = x - \left [ I -\left (\tilde{\mathcal{K}}_n^M \right )'  (\varphi) \right]^{-1}  \left \{  x  - \tilde{\mathcal{K}}_n^M  ( x) - f  \right \}
\end{eqnarray}
Then
\begin{eqnarray}\label{eq:4.11}
B_n (x) = x \;\;\; \mbox{if and only if} \;\;\;  x  -\tilde{\mathcal{K}}_n^M  ( x) = f.
\end{eqnarray}
As in Grammont \cite{Gram2}, it can be shown that there is a $\delta_0 > 0$ such that $B_n$ has a unique 
fixed point $z_n^M$ in $B (\varphi, \delta_0)$ and that 
\begin{eqnarray*}
  \| z_n^M - \varphi\|_\infty 
 \leq \frac {3} {2}  \left \| \left [ I -\left (\tilde{\mathcal{K}}_n^M \right )'  (\varphi) \right]^{-1}  \left [ \mathcal{K} (\varphi) - \tilde{\mathcal{K}}_n^M (\varphi) \right ] \right \|_\infty.
\end{eqnarray*}
Hence
\begin{eqnarray}\label{eq:4.12}
\| z_n^M - \varphi\|_\infty 
& \leq & 3 \; C_{ 7 } \;  \left (  \|\mathcal{K} (\varphi) - 
\mathcal{K}_m (\varphi)\|_\infty + \|\mathcal{K}_m (\varphi) -\tilde{\mathcal{K}}_n^M (\varphi)\|_\infty \right ).
\end{eqnarray}
Without loss of generality, assume that 
$$ n \geq n_1 \Rightarrow  Q_n \varphi \in B (\varphi, \delta_0) \; \mbox {and} \; n \geq n_1, m \geq m_1 = n_1 p
\Rightarrow \varphi_m \in B (\varphi, \delta_0), Q_n \varphi_m \in B (\varphi, \delta_0).$$
By (\ref{eq:2.11}),
$$\|\mathcal{K} (\varphi) - \mathcal{K}_m (\varphi)\|_\infty = O \left (\tilde{h}^d \right ).$$
In order to obtain the order of convergence for the term $\|\mathcal{K}_m (\varphi) -\tilde{\mathcal{K}}_n^M (\varphi)\|_\infty$ in the  estimate (\ref{eq:4.12}), we prove the following result.
\end{remark}

\begin{proposition}\label{prop:4.5}
Let $ r \geq 1. $ 
If $\displaystyle {\frac {\partial^2 \kappa} {\partial u^2} \in C^{ r} (\Omega)}$ and $ f \in C^{ r} [a, b],$ then  for $ n \geq n_1,$
\begin{eqnarray}\label{eq:4.14}
\|(I - Q_n)  (\mathcal{K}_m (Q_n\varphi) - \mathcal{K}_m (\varphi) - \mathcal {K}_m'  (\varphi) (Q_n \varphi - \varphi ) ) \|_\infty
= O (h^{3 r}).
\end{eqnarray}
\end{proposition}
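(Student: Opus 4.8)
The quantity to be estimated is precisely the second-order Taylor remainder of the Nyström operator $\mathcal{K}_m$ at the exact solution $\varphi$, evaluated at the increment $w := Q_n\varphi - \varphi = -(I - Q_n)\varphi$. The plan is therefore to write this remainder in the integral form supplied by Proposition \ref{prop:4.1} (equation (\ref{eq:4.3})),
\begin{equation}\nonumber
\mathcal{K}_m(Q_n\varphi) - \mathcal{K}_m(\varphi) - \mathcal{K}_m'(\varphi)\, w = \int_0^1 (1 - \theta)\, \mathcal{K}_m''(\varphi + \theta w)(w, w)\, d\theta =: R(w),
\end{equation}
and then to exhibit where the three factors of $h^r$ come from: the squared increment $(w,w)$ will supply $h^{2r}$, and the outer operator $(I - Q_n)$, acting in the variable $s$, will supply the remaining $h^r$.

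Using the explicit form (\ref{eq:2.10}) of $\mathcal{K}_m''$, the remainder reads, for $s \in [a,b]$,
\begin{equation}\nonumber
R(w)(s) = \tilde h \sum_{j=1}^m \sum_{i=1}^\rho w_i\left[\int_0^1 (1 - \theta)\,\frac{\partial^2\kappa}{\partial u^2}\!\left(s, \zeta_i^j, \varphi(\zeta_i^j) + \theta\, w(\zeta_i^j)\right) d\theta\right] w(\zeta_i^j)^2.
\end{equation}
The first thing I would verify is that the $u$-arguments remain in the compact region underlying $C_6$: since $\varphi + \theta w = (1-\theta)\varphi + \theta\, Q_n\varphi$ and $B(\varphi,\delta)$ is convex, for $n \geq n_1$ (so that $Q_n\varphi \in B(\varphi,\delta)$) one has $\varphi + \theta w \in B(\varphi,\delta)$ for every $\theta \in [0,1]$. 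Because $\frac{\partial^2\kappa}{\partial u^2} \in C^r(\Omega)$, the mixed derivative $\frac{\partial^{r+2}\kappa}{\partial s^r \partial u^2}$ then exists and is bounded on the relevant compact set, so $R(w)$ is a $C^r$ function of $s$.

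Next I would apply the interpolation estimate (\ref{eq:3.18}) in the variable $s$,
\begin{equation}\nonumber
\|(I - Q_n) R(w)\|_\infty \leq C_3\, \|R(w)^{(r)}\|_\infty\, h^r,
\end{equation}
and differentiate the displayed expression for $R(w)$ $r$ times in $s$ under the finite sum and the $\theta$-integral; the only $s$-dependence sits in $\frac{\partial^2\kappa}{\partial u^2}(s,\cdot,\cdot)$. Bounding the resulting mixed derivative by its maximum, using $\tilde h \sum_{j=1}^m \sum_{i=1}^\rho |w_i| = (b-a)\sum_{i=1}^\rho |w_i|$ and $|w(\zeta_i^j)|^2 \leq \|w\|_\infty^2$, gives
\begin{equation}\nonumber
\|R(w)^{(r)}\|_\infty \leq \tfrac12 (b-a)\Big(\sum_{i=1}^\rho |w_i|\Big) \Big\|\tfrac{\partial^{r+2}\kappa}{\partial s^r\partial u^2}\Big\|_\infty \|w\|_\infty^2.
\end{equation}
Finally, since $w = -(I - Q_n)\varphi$ and $\varphi \in C^r[a,b]$, estimate (\ref{eq:3.18}) yields $\|w\|_\infty = O(h^r)$, hence $\|w\|_\infty^2 = O(h^{2r})$, and combining the last three displays produces $\|(I - Q_n)R(w)\|_\infty = O(h^{3r})$.

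I expect the only genuinely delicate point to be the bookkeeping that isolates the factor $h^{2r}$ from the squared increment. The key structural observation — and the reason this nonlinear estimate is in fact easier than the linear Proposition \ref{prop:3.2} — is that here the increment enters quadratically, so $\|w\|_\infty^2 = O(h^{2r})$ is immediate from (\ref{eq:3.18}) and no Gauss-point cancellation (Lemma \ref{lemma:3.1}) is required to extract it; the outer interpolation in $s$ then furnishes the last $h^r$. The care needed lies in keeping the $u$-argument inside $B(\varphi,\delta)$ throughout the Taylor integral and in ensuring $\varphi \in C^r[a,b]$, so that both applications of (\ref{eq:3.18}) are legitimate.
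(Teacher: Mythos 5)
Your proposal is correct and follows essentially the same route as the paper: identify the expression as the integral-form Taylor remainder $R(Q_n\varphi-\varphi)$ from Proposition \ref{prop:4.1}, apply (\ref{eq:3.18}) in $s$ to reduce to bounding $\|(R(Q_n\varphi-\varphi))^{(r)}\|_\infty$ via the mixed derivative $\partial^{r+2}\kappa/\partial s^r\partial u^2$ on the compact set, and then use (\ref{eq:3.18}) once more to get $\|Q_n\varphi-\varphi\|_\infty^2 = O(h^{2r})$. Your closing remark that no Gauss-point cancellation (Lemma \ref{lemma:3.1}) is needed here, because the increment enters quadratically, accurately reflects the paper's argument as well.
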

\begin{proof}
Let $\delta = \delta_0,$ $v_1 = \varphi$ and $v_2 = Q_n \varphi$ in Proposition \ref{prop:4.1}.
Then
\begin{eqnarray*}
&&  \mathcal{K}_m (Q_n\varphi) - \mathcal{K}_m (\varphi) - \mathcal {K}_m'  (\varphi) (Q_n \varphi - \varphi )  = R  ( Q_n \varphi - \varphi ).
\end{eqnarray*}
By (\ref{eq:3.18})
$$\|(I - Q_n) R ( Q_n \varphi - \varphi ) \|_\infty  \leq C_3 \|( R  ( Q_n \varphi - \varphi ) )^{(r)}\|_\infty h^r.$$
We have
\begin{eqnarray}\nonumber
( R  ( Q_n \varphi - \varphi ) )^{(r)} (s) 
&=&  \int_0^1    (1 - \theta)  (S_\theta ( Q_n \varphi - \varphi ) )^{(r)} (s)   d \theta,
\end{eqnarray}
where from (\ref{eq:4.4}),
\begin{eqnarray*}
&&(S_\theta ( Q_n \varphi - \varphi ) )^{(r)} (s) \\
&  & \hspace*{0.5 in} = \tilde {h}  \sum_{j=1}^m  \sum_{i=1}^\rho  w_i \;  \frac {\partial^{r +2} \kappa } {\partial s^r\partial u^2} \left (s,  \zeta_i^j, 
\varphi (\zeta_i^j)+ \theta (Q_n \varphi - \varphi )    (\zeta_i^j) \right ) ( Q_n \varphi - \varphi )^2 (\zeta_i^j)
\end{eqnarray*}
Let
$$ C_{8} = \max_{\stackrel {s, t \in [a, b]}{|u| \leq \|\varphi \|_\infty + \delta_0 }}
\left | \frac {\partial^{r +2} \kappa } {\partial s^r\partial u^2} (s, t, u) \right |.$$
Then
$$\|(S_\theta ( Q_n \varphi - \varphi ) )^{(r)}\|_\infty \leq C_{8} (b - a) \left ( \sum_{i=1}^\rho | w_i | \right ) \|Q_n \varphi - \varphi \|_\infty^2$$
and
\begin{equation}\nonumber
\|( R  ( Q_n \varphi - \varphi ) )^{(r)}\|_\infty \leq  \frac {C_{8} (b - a)} {2}  \left ( \sum_{i=1}^\rho | w_i | \right ) \|Q_n \varphi - \varphi \|_\infty^2.
\end{equation}
Since $ \kappa \in C^r (\Omega)$ and $ f \in C^{ r} [a, b],$ it follows that that $\varphi \in C^r [a, b].$ Hence by  (\ref{eq:3.18}),
\begin{equation}\nonumber
\|Q_n \varphi - \varphi \|_\infty \leq C_3 \|\varphi^{(r)}\|_\infty h^r.
\end{equation}
Thus,
\begin{eqnarray*}
\|(I - Q_n) R ( Q_n \varphi - \varphi ) \|_\infty 
& \leq &  \frac {( C_3)^3 C_{8} (b - a)} {2}  \left ( \sum_{i=1}^\rho | w_i | \right ) \| \varphi^{(r)}\|_\infty^2 h^{3 r},
\end{eqnarray*}
which completes the proof of  (\ref{eq:4.14}).
\end{proof}



In the following theorem we obtain the order of convergence in the discrete modified projection method.


\begin{theorem}\label{thm:4.6}
Let $ r \geq 1, $ $ \kappa \in C^d (\Omega), $ $\displaystyle {\frac {\partial \kappa} {\partial u} \in C^{2 r} (\Omega)}$ and $ f \in C^{d} [a, b].$  
Let $\varphi$ be the unique solution of (\ref{eq:2.2}) and assume that $1$ is not an eigenvalue of $\mathcal{K}' (\varphi).$ 
Let $ n \geq n_1$ and $m \geq m_1.$
Let $\mathcal{X}_n$ be the space of piecewise polynomials of degree $\leq r - 1$ with respect to the partition (\ref{eq:2.15})
and $Q_n: L^\infty [0, 1] \rightarrow \mathcal {X}_n$ be the interpolatory projection at $r$ Gauss points defined by
(\ref{eq:2.19}). Let $z_n^M $ be the unique solution of (\ref{eq:2.24}) in $B (\varphi, \delta_0).$ Then
\begin{equation}\label{eq:4.15}
\|z_n^M - \varphi \|_\infty = O (\max \{\tilde{h}^{d}, h^{3 r} \}).
\end{equation}
\end{theorem}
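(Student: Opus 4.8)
The plan is to build on the fixed-point estimate (\ref{eq:4.12}) from Remark \ref{remark:4.4}, which has already reduced the bound on $\|z_n^M - \varphi\|_\infty$ to controlling the two terms $\|\mathcal{K}(\varphi) - \mathcal{K}_m(\varphi)\|_\infty$ and $\|\mathcal{K}_m(\varphi) - \tilde{\mathcal{K}}_n^M(\varphi)\|_\infty$. The first term is $O(\tilde{h}^d)$ directly from (\ref{eq:2.11}), and it supplies the $\tilde{h}^d$ part of the claimed order. The entire remaining task is to show that the second term is $O(h^{3r})$; this is the nonlinear analogue of the estimate (\ref{eq:3.26}) that drove the linear Theorem \ref{thm:1.5}.

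First I would rewrite the second term purely algebraically. Using the definition (\ref{eq:2.23}) of $\tilde{\mathcal{K}}_n^M$, the terms $Q_n\mathcal{K}_m(\varphi)$ and $Q_n\mathcal{K}_m(Q_n\varphi)$ combine so that
$$\mathcal{K}_m(\varphi) - \tilde{\mathcal{K}}_n^M(\varphi) = (I - Q_n)\bigl[\mathcal{K}_m(\varphi) - \mathcal{K}_m(Q_n\varphi)\bigr].$$
Next I would linearize the bracket using the Taylor expansion of Proposition \ref{prop:4.1} with $v_1 = \varphi$ and $v_2 = Q_n\varphi$, which expresses $\mathcal{K}_m(Q_n\varphi) - \mathcal{K}_m(\varphi) - \mathcal{K}_m'(\varphi)(Q_n\varphi - \varphi)$ as a remainder $R(Q_n\varphi - \varphi)$. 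Since $Q_n\varphi - \varphi = -(I - Q_n)\varphi$, this splits the second term into a linear contribution $(I - Q_n)\mathcal{K}_m'(\varphi)(I - Q_n)\varphi$ and a quadratic remainder $(I - Q_n)R(Q_n\varphi - \varphi)$.

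The remainder is exactly the quantity estimated in Proposition \ref{prop:4.5}, so it is already $O(h^{3r})$; this uses only $\frac{\partial^2\kappa}{\partial u^2} \in C^r(\Omega)$, which is implied by the hypothesis $\frac{\partial\kappa}{\partial u} \in C^{2r}(\Omega)$ since then $\frac{\partial^2\kappa}{\partial u^2} \in C^{2r-1}(\Omega) \subseteq C^r(\Omega)$. For the linear contribution, the crucial observation — and the one I expect to carry the argument — is that by (\ref{****}) the operator $\mathcal{K}_m'(\varphi)$ is itself a Nystr\"{o}m operator, namely the discretization of the linear integral operator whose kernel is $\ell(s,t) = \frac{\partial\kappa}{\partial u}(s,t,\varphi(t))$. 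Because $\frac{\partial\kappa}{\partial u} \in C^{2r}(\Omega)$ and, since $\kappa, f \in C^d$ with $d \geq 2r$, the exact solution $\varphi$ lies in $C^{2r}[a,b]$, this kernel $\ell$ belongs to $C^{2r}([a,b]\times[a,b])$. Proposition \ref{prop:3.4} — the estimate (\ref{eq:3.26}) applied to $\ell$ — then gives $\|(I - Q_n)\mathcal{K}_m'(\varphi)(I - Q_n)\varphi\|_\infty = O(h^{3r})$.

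The main obstacle is thus not analytic but one of bookkeeping: verifying that the smoothness hypotheses on $\kappa$ and $f$ feed exactly the right regularity into Propositions \ref{prop:4.5} and \ref{prop:3.4}, and that the linearization of the nonlinear operator is identified correctly as a Nystr\"{o}m operator to which the linear theory applies. Combining the two $O(h^{3r})$ bounds gives $\|\mathcal{K}_m(\varphi) - \tilde{\mathcal{K}}_n^M(\varphi)\|_\infty = O(h^{3r})$, and substituting this together with the $O(\tilde{h}^d)$ bound into (\ref{eq:4.12}) yields (\ref{eq:4.15}).
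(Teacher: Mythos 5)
Your proposal is correct and follows essentially the same route as the paper's proof: the reduction via (\ref{eq:4.12}), the identity $\mathcal{K}_m(\varphi) - \tilde{\mathcal{K}}_n^M(\varphi) = (I - Q_n)\left[\mathcal{K}_m(\varphi) - \mathcal{K}_m(Q_n\varphi)\right]$, the Taylor split into the remainder handled by Proposition \ref{prop:4.5} and the linear term $(I-Q_n)\mathcal{K}_m'(\varphi)(I-Q_n)\varphi$ handled by Proposition \ref{prop:3.4} with kernel $\ell(s,t) = \frac{\partial\kappa}{\partial u}(s,t,\varphi(t))$. The only difference is that you spell out the regularity bookkeeping (e.g. $\frac{\partial^2\kappa}{\partial u^2} \in C^{2r-1} \subseteq C^r$) slightly more explicitly than the paper does.
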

\begin{proof}

From (\ref{eq:4.12}),
\begin{eqnarray}\label{eq:4.16}
\| z_n^M - \varphi\|_\infty 
& \leq & 3 \; C_{7 } \;   \left ( \|\mathcal{K} (\varphi) - 
\mathcal{K}_m (\varphi)\|_\infty + \|\mathcal{K}_m (\varphi) -\tilde{\mathcal{K}}_n^M (\varphi)\|_\infty \right ).
\end{eqnarray}
From (\ref{eq:2.11}),
\begin{eqnarray}\label{eq:4.17}
\|\mathcal{K} (\varphi) -  \mathcal{K}_m (\varphi)\|_\infty = O \left ( \tilde{h}^{d} \right ).
\end{eqnarray}
Note that
\begin{eqnarray}\nonumber
\mathcal{K}_m (\varphi) - \tilde{\mathcal{K}}_n^M (\varphi)  &= & (I - Q_n) (\mathcal{K}_m (\varphi) - \mathcal{K}_m (Q_n\varphi))\\\nonumber
& = & - (I - Q_n) (\mathcal{K}_m (Q_n\varphi) - \mathcal{K}_m (\varphi) - \mathcal {K}_m'  (\varphi) (Q_n \varphi - \varphi ) )\\\nonumber
&& -  (I - Q_n) \mathcal {K}_m'  (\varphi) (Q_n \varphi - \varphi ). 
\end{eqnarray}
Hence
\begin{eqnarray}\nonumber
\|\mathcal{K}_m (\varphi) - \tilde{\mathcal{K}}_n^M (\varphi)  \|_\infty  
& \leq & \| (I - Q_n) (\mathcal{K}_m (Q_n\varphi) - \mathcal{K}_m (\varphi) - \mathcal {K}_m'  (\varphi) (Q_n \varphi - \varphi ) )\|_\infty\\\label{eq:4.18}
&& + \| (I - Q_n) \mathcal {K}_m'  (\varphi) (Q_n \varphi - \varphi )\|_\infty. 
\end{eqnarray}
By (\ref{eq:4.14}) of Proposition \ref{prop:4.5},
\begin{eqnarray}\label{eq:4.19}
\| (I - Q_n) (\mathcal{K}_m (Q_n\varphi) - \mathcal{K}_m (\varphi) - \mathcal {K}_m'  (\varphi) (Q_n \varphi - \varphi ) )\|_\infty = O (h^{3 r}).
\end{eqnarray}
Define
$$\ell (s, t) = \frac {\partial \kappa} {\partial u} (s, t, \varphi (t)).$$
Then from (\ref{****}),
\begin{equation}\nonumber
\mathcal {K}_m'  (\varphi) v (s)  =  \tilde {h}  \sum_{j=1}^m  \sum_{i=1}^\rho  w_i \;  \ell (s,  \zeta_i^j) v (\zeta_i^j), \;\;\; s \in [a, b].
\end{equation}
By assumption, $\displaystyle {\frac {\partial \kappa} {\partial u} \in C^{2r} (\Omega)}.$ Since $f \in C^{2r} [a, b],$
it follows that $\varphi \in  C^{2r} [a, b].$ Hence \\ $\ell \in C^{2 r} ([a, b] \times [a, b]).$ Thus,
Proposition \ref{prop:3.4} is applicable and we obtain
\begin{eqnarray}\label{eq:4.20}
\| (I - Q_n) \mathcal {K}_m'  (\varphi) (Q_n \varphi - \varphi )\|_\infty = O (h^{3 r}).
\end{eqnarray}
From (\ref{eq:4.18})-(\ref{eq:4.20}), it follows that
\begin{eqnarray}\label{eq:4.21}
\|\mathcal{K}_m (\varphi) - \tilde{\mathcal{K}}_n^M (\varphi)  \|_\infty = O (h^{3 r}).
\end{eqnarray}
The required result follows from (\ref{eq:4.16}), (\ref{eq:4.17}) and  (\ref{eq:4.21}).
\end{proof}


\setcounter{equation}{0}
\section{Discrete Iterated Modified Projection method for Urysohn Integral Equations}

Recall from Remark \ref{remark:4.4} that there exists $\delta_0 > 0$ and a positive integer $n_1$ such that
for $n \geq n_1$  equation (\ref{eq:2.24}) has a unique solution $z_n^M$ in $ B (\varphi, \delta_0).$ 

 The  discrete iterated modified projection solution is defined as 
\begin{equation}\label{eq:5.1}
\tilde{z}_n^M =   \mathcal {K}_m ({z}_n^M) + f.
\end{equation}
In this section we show that $\tilde{z}_n^M \rightarrow \varphi$ as $n \rightarrow \infty $ and  obtain its order of convergence.

\begin{proposition}\label{prop:5.1}
Let $ r \geq 1, $  $ \kappa \in C^d (\Omega), $ $\displaystyle {\frac {\partial \kappa} {\partial u} \in C^{2 r} (\Omega)}$ and $ f \in C^{d} [a, b].$ 
Let $\varphi_m$ denote the Nystr\"{o}m approximation of the exact solution $\varphi$ of the Urysohn integral equation (\ref{eq:1.1})-(\ref{eq:1.2}).
Then for $n \geq n_1$ and $m = n p,$ 
\begin{eqnarray}\label{eq:5.2}
\tilde{z}_n^M - \varphi_m =  \mathcal {K}_m' (\varphi_m) (z_n^M - \varphi_m) + O (\max \{\tilde{h}^{d}, h^{3 r} \}^2).
\end{eqnarray}
\end{proposition}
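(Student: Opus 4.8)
The plan is to rewrite the difference $\tilde{z}_n^M - \varphi_m$ entirely in terms of the Nyström operator and then apply the second-order Taylor expansion already established in Proposition \ref{prop:4.1}. First I would recall that the Nyström approximation satisfies $\varphi_m = \mathcal{K}_m(\varphi_m) + f$, while by definition $\tilde{z}_n^M = \mathcal{K}_m(z_n^M) + f$. Subtracting cancels the common term $f$ and leaves
$$\tilde{z}_n^M - \varphi_m = \mathcal{K}_m(z_n^M) - \mathcal{K}_m(\varphi_m).$$

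Next I would apply Proposition \ref{prop:4.1} with $v_1 = \varphi_m$ and $v_2 = z_n^M$. By the arrangement recorded in Remark \ref{remark:4.4}, both $z_n^M$ and $\varphi_m$ lie in $B(\varphi, \delta_0)$ for $n \geq n_1$ and $m = np$, so the proposition applies and gives
$$\mathcal{K}_m(z_n^M) - \mathcal{K}_m(\varphi_m) = \mathcal{K}_m'(\varphi_m)(z_n^M - \varphi_m) + R(z_n^M - \varphi_m),$$
together with the quadratic remainder bound $\|R(z_n^M - \varphi_m)\|_\infty \leq \frac{C_{6}(b-a)}{2}\left(\sum_{i=1}^\rho |w_i|\right)\|z_n^M - \varphi_m\|_\infty^2$. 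Thus the stated identity will hold with error term $R(z_n^M - \varphi_m)$, and it only remains to show this is $O(\max\{\tilde{h}^d, h^{3r}\}^2)$.

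The final step is to control $\|z_n^M - \varphi_m\|_\infty$. I would split it by the triangle inequality as $\|z_n^M - \varphi_m\|_\infty \leq \|z_n^M - \varphi\|_\infty + \|\varphi - \varphi_m\|_\infty$, invoke Theorem \ref{thm:4.6} for the first summand (which is $O(\max\{\tilde{h}^d, h^{3r}\})$) and estimate (\ref{eq:2.14}) for the second (which is $O(\tilde{h}^d)$). Since $\tilde{h}^d \leq \max\{\tilde{h}^d, h^{3r}\}$, the sum is $O(\max\{\tilde{h}^d, h^{3r}\})$; squaring and using the quadratic remainder bound then yields $\|R(z_n^M - \varphi_m)\|_\infty = O(\max\{\tilde{h}^d, h^{3r}\}^2)$. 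Combining the two displays completes the argument.

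There is no substantive obstacle here: all the analytic content is already packaged in Proposition \ref{prop:4.1}, which supplies the quadratic Taylor remainder for $\mathcal{K}_m$, and in the order estimates of Theorem \ref{thm:4.6} and (\ref{eq:2.14}). The only point demanding care is checking that both base points $\varphi_m$ and $z_n^M$ actually lie in the ball $B(\varphi, \delta_0)$, so that Proposition \ref{prop:4.1} is legitimately applicable; this membership is exactly what is arranged in Remark \ref{remark:4.4}, so it suffices to cite it.
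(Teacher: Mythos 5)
Your proposal is correct and follows essentially the same route as the paper's own proof: subtract the fixed-point equations to reduce to $\mathcal{K}_m(z_n^M)-\mathcal{K}_m(\varphi_m)$, apply the Taylor expansion of Proposition \ref{prop:4.1} with base point $\varphi_m$, and bound the quadratic remainder via Theorem \ref{thm:4.6} and (\ref{eq:2.14}). The only difference is that you spell out the triangle-inequality step for $\|z_n^M-\varphi_m\|_\infty$ and the ball-membership check, which the paper leaves implicit via Remark \ref{remark:4.4}.
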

\begin{proof}
Since
$$ \varphi_m - \mathcal {K}_m (\varphi_m) = f,$$
it follows that
\begin{equation}\label{eq:5.3}
\tilde{z}_n^M - \varphi_m =  \mathcal {K}_m ({z}_n^M)  - \mathcal {K}_m (\varphi_m).
\end{equation}
In Theorem \ref{thm:4.6} we proved that
\begin{equation}\nonumber
\|z_n^M - \varphi \|_\infty = O (\max \{\tilde{h}^{d}, h^{3 r} \}).
\end{equation}
 By  Proposition \ref{prop:4.1} with $\delta = \delta_0,$  for $ s \in [a, b],$
\begin{eqnarray}\label{eq:5.4}
\mathcal {K}_m ({z}_n^M) (s) - \mathcal {K}_m (\varphi_m) (s) = \mathcal {K}_m' (\varphi_m) (z_n^M - \varphi_m)  (s) + 
R (z_n^M - \varphi_m) (s),
\end{eqnarray}
where
\begin{eqnarray}\nonumber
\|R (z_n^M - \varphi_m)\|_\infty & \leq & \frac {C_{6} (b - a)} {2} \left (\sum_{i=1}^\rho |w_i| \right ) \|z_n^M - \varphi_m\|_\infty^2
\\\label{eq:5.5}
& = & O (\max \{\tilde{h}^{d}, h^{3 r} \}^2).
\end{eqnarray}
The required result follows from (\ref{eq:5.3})-(\ref{eq:5.5}).
\end{proof}
In order to obtain an error estimate for the first term in (\ref{eq:5.2}) we need to define a new operator which has $z_n^M$ 
as a fixed point. For this purpose, we show that for all $m$ large enough, $ I - \mathcal{K}_m' (\varphi_m)$ are invertible and are
uniformly bounded.
\begin{proposition}\label{prop:5.2}
Let $\displaystyle { \frac {\partial^2 \kappa} {\partial u^2} \in C (\Omega).}$  There exists a positive  integer $m_2 \geq m_1$ such that for $m \geq m_2$
\begin{equation}\label{eq:5.6}
\| (I - \mathcal{K}_m' (\varphi_m))^{-1}\| \leq 2 \; C_{7}.
\end{equation}
\end{proposition}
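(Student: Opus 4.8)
The plan is to mirror exactly the Neumann-series perturbation argument already used in Proposition \ref{prop:4.3}, but now perturbing the invertible operator $I - \mathcal{K}_m'(\varphi)$ from the fixed point $\varphi$ to the Nystr\"om solution $\varphi_m$. The starting point is estimate (\ref{eq:4.6}), which guarantees that for $m \geq m_1$ the operator $I - \mathcal{K}_m'(\varphi)$ is invertible with $\|(I - \mathcal{K}_m'(\varphi))^{-1}\| \leq C_7$. The whole proof reduces to showing that $\mathcal{K}_m'(\varphi_m)$ is a sufficiently small perturbation of $\mathcal{K}_m'(\varphi)$ once $m$ is large.

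First I would record that, by (\ref{eq:2.14}), $\|\varphi - \varphi_m\|_\infty = O(\tilde{h}^{d}) \to 0$ as $m \to \infty$. Hence there is an integer (which will eventually become $m_2$) beyond which $\varphi_m \in B(\varphi, \delta)$, so that Proposition \ref{prop:4.2} applies with $x = \varphi$ and $y = \varphi_m$. This yields
\begin{equation}\nonumber
\|\mathcal{K}_m'(\varphi) - \mathcal{K}_m'(\varphi_m)\| \leq \gamma \, \|\varphi - \varphi_m\|_\infty,
\end{equation}
where $\gamma = C_{6}(b-a)\left(\sum_{i=1}^\rho |w_i|\right)$ is independent of $m$. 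Since the right-hand side tends to $0$, I can choose $m_2 \geq m_1$ so that for all $m \geq m_2$ one has $\gamma \|\varphi - \varphi_m\|_\infty \leq \frac{1}{2 C_7}$, i.e. $\|\mathcal{K}_m'(\varphi) - \mathcal{K}_m'(\varphi_m)\| \leq \frac{1}{2 C_7}$.

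Next I would use the factorization identity, exactly as in Proposition \ref{prop:4.3},
\begin{equation}\nonumber
I - \mathcal{K}_m'(\varphi_m) = \left[ I - \left\{ \mathcal{K}_m'(\varphi_m) - \mathcal{K}_m'(\varphi) \right\}(I - \mathcal{K}_m'(\varphi))^{-1} \right] (I - \mathcal{K}_m'(\varphi)).
\end{equation}
The bracketed factor is of the form $I - E$ with $\|E\| \leq \|\mathcal{K}_m'(\varphi_m) - \mathcal{K}_m'(\varphi)\|\, C_7 \leq \frac{1}{2}$, so it is invertible with $\|(I-E)^{-1}\| \leq \frac{1}{1 - 1/2} = 2$ by the Neumann series. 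Consequently $I - \mathcal{K}_m'(\varphi_m)$ is invertible and
\begin{equation}\nonumber
\|(I - \mathcal{K}_m'(\varphi_m))^{-1}\| \leq 2\,\|(I - \mathcal{K}_m'(\varphi))^{-1}\| \leq 2\,C_7,
\end{equation}
which is the claimed bound (\ref{eq:5.6}).

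I do not expect any serious obstacle here: the argument is routine Banach-algebra perturbation theory and is structurally identical to Proposition \ref{prop:4.3}. The only point requiring a little care is that $\varphi_m$ itself depends on $m$, so the Lipschitz constant $\gamma$ must be $m$-independent in order to conclude uniformly; this is precisely what Proposition \ref{prop:4.2} supplies, together with the fact that $\varphi_m$ eventually lies in the ball $B(\varphi,\delta)$ on which that Lipschitz estimate is valid.
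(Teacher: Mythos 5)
Your proposal is correct and follows essentially the same route as the paper: both invoke Proposition \ref{prop:4.2} to get $\|\mathcal{K}_m'(\varphi)-\mathcal{K}_m'(\varphi_m)\|\leq\gamma\|\varphi-\varphi_m\|_\infty\rightarrow 0$, choose $m_2$ so that this is at most $\tfrac{1}{2C_7}$, and then apply the same factorization and Neumann-series bound. Your added remark that $\varphi_m$ must eventually lie in $B(\varphi,\delta)$ for the Lipschitz estimate to apply is a point the paper disposes of in Remark \ref{remark:4.4}, so nothing is missing.
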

\begin{proof}
Recall from (\ref{eq:4.6}) that for $m \geq m_1,$
$$\| (I - \mathcal{K}_m' (\varphi))^{-1}\| \leq   C_{7}.$$
By Proposition \ref{prop:4.2} with $\delta = \delta_0,$
\begin{eqnarray*}
\left  \|\mathcal {K}_m'  (\varphi)  - \mathcal {K}_m'  (\varphi_m) \right \| \leq \gamma \|\varphi - \varphi_m \|_\infty \rightarrow 0 \; \mbox {as} \; m \rightarrow \infty.  
\end{eqnarray*}
Choose $m_2 \geq m_1$ such that
$$  m \geq m_2 \Rightarrow \|\varphi - \varphi_m \|_\infty   \leq \frac {1} {2 C_{7} \gamma}.$$
Since
\begin{eqnarray*}
I - \mathcal{K}_m' (\varphi_m) 
& = & (I - (\mathcal{K}_m' (\varphi_m)  - \mathcal{K}_m' (\varphi)) (I - \mathcal{K}_m' (\varphi))^{-1}) (I - \mathcal{K}_m' (\varphi)),
\end{eqnarray*}
it follows that for $m \geq m_2,$
$$  (I - \mathcal{K}_m' (\varphi_m))^{-1} \; \mbox{exists} $$
and that 
\begin{equation}\nonumber
\| (I - \mathcal{K}_m' (\varphi_m))^{-1}\| \leq 2 \; C_{7}.
\end{equation}
This completes the proof.
\end{proof}

\begin{remark}
In (\ref{eq:4.10}) we  defined an operator $B_n$ which has  a unique fixed point $z_n^M$ in $B (\varphi, \delta_0).$  Now we define another operator 
$\tilde{B}_n$  which also has $z_n^M$ as a fixed point.

From (\ref{eq:2.23})  recall that
\begin{equation*}
\tilde{\mathcal{K}}_n^M (x) = Q_n \mathcal {K}_m (x) + \mathcal {K}_m (Q_n x) - Q_n \mathcal {K}_m (Q_n x).
\end{equation*}
For $ m\geq m_2,$ define
\begin{eqnarray}\label{eq:5.7}
\tilde{B}_n (x) = \varphi_m - \left [ I - {\mathcal{K}}_m'   (\varphi_m) \right]^{-1}  \left \{  \mathcal {K}_m (\varphi_m) -  
\mathcal{K}_m'   (\varphi_m) \varphi_m - \tilde{\mathcal{K}}_n^M  ( x) +   \mathcal{K}_m'   (\varphi_m) x \right \}.
\end{eqnarray}
 Then
\begin{eqnarray*}
&& \tilde{B}_n (x) = x\\
&\Leftrightarrow& (I - {\mathcal{K}}_m'   (\varphi_m) ) \varphi_m -   \left \{  \mathcal {K}_m (\varphi_m) -  
\mathcal{K}_m'   (\varphi_m) \varphi_m - \tilde{\mathcal{K}}_n^M  ( x)  +   \mathcal{K}_m'   (\varphi_m) x\right \}\\
&& \hspace*{2 in} = (I - {\mathcal{K}}_m'   (\varphi_m) )  x\\
&\Leftrightarrow& x - \tilde{\mathcal{K}}_n^M  ( x) = \varphi_m  - \mathcal {K}_m (\varphi_m) = f.
\end{eqnarray*}
Thus, $x$ is a fixed point of $\tilde{B}_n$ if and only if it satisfies the equation (\ref{eq:2.24}).

Note that for $n \geq n_1$ and $ m \geq m_2,$ 
\begin{eqnarray*}
z_n^M - \varphi_m & = &  \tilde{B}_n (z_n^M) - \varphi_m \\
& = & - \left [ I - {\mathcal{K}}_m'   (\varphi_m) \right]^{-1}  \left \{  \mathcal {K}_m (\varphi_m) -  
\mathcal{K}_m'   (\varphi_m) \varphi_m - \tilde{\mathcal{K}}_n^M  ( z_n^M) +   \mathcal{K}_m'   (\varphi_m) z_n^M \right \}.
\end{eqnarray*}
Since ${\mathcal{K}}_m'   (\varphi_m)$ and $\left [ I - {\mathcal{K}}_m'   (\varphi_m) \right]^{-1}$ commute, the
first term in (\ref{eq:5.2}) can be written as
\begin{eqnarray*}
&&\mathcal{K}_m'   (\varphi_m) ( z_n^M - \varphi_m ) \\
& = &  
 - \left [ I - {\mathcal{K}}_m'   (\varphi_m) \right]^{-1}  \mathcal{K}_m'   (\varphi_m) \left \{  \mathcal {K}_m (\varphi_m) -  
\mathcal{K}_m'   (\varphi_m) \varphi_m - \tilde{\mathcal{K}}_n^M  ( z_n^M) +   \mathcal{K}_m'   (\varphi_m) z_n^M \right \}.
\end{eqnarray*}
We write
\begin{eqnarray}\nonumber
&&\mathcal{K}_m'   (\varphi_m) ( z_n^M - \varphi_m ) \\\nonumber
& = &  
 - \left [ I - {\mathcal{K}}_m'   (\varphi_m) \right]^{-1}  \mathcal{K}_m'   (\varphi_m) \left \{  \mathcal {K}_m (\varphi_m) -  \tilde{\mathcal{K}}_n^M (\varphi_m) \right \}\\\nonumber
&&+ \left [ I - {\mathcal{K}}_m'   (\varphi_m) \right]^{-1} \mathcal{K}_m'   (\varphi_m) \left \{
\tilde{\mathcal{K}}_n^M (z_n^M) - \tilde{\mathcal{K}}_n^M (\varphi_m)  - \left (  \tilde{\mathcal{K}}_n^M \right )' (\varphi_m) 
(z_n^M - \varphi_m) \right \}\\\label{eq:5.8}
&& + \left [ I - {\mathcal{K}}_m'   (\varphi_m) \right]^{-1} \mathcal{K}_m'   (\varphi_m) \left \{
\left (\left (  \tilde{\mathcal{K}}_n^M \right )' (\varphi_m)  -  \mathcal{K}_m'   (\varphi_m) \right  ) (z_n^M - \varphi_m) \right \}.
\end{eqnarray}
We now obtain error estimates for the quantities appearing in the above expression.
\end{remark}
\begin{proposition}\label{prop:5.3}
If $\displaystyle {\frac {\partial \kappa} {\partial u}  \in C^{3 r} (\Omega)}$ and $x \in C^{2 r} [a, b],$ then for $ n \geq n_1$ and $m \geq m_2,$
\begin{equation}\label{eq:5.9}
\| \mathcal {K}_m'  (\varphi_m) (I - Q_n) \mathcal {K}_m'  (\varphi_m) (I - Q_n) x \|_\infty = O (h^{4 r}).
\end{equation}
\end{proposition}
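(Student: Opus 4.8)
The difficulty is that $\mathcal{K}_m'(\varphi_m)$ is built from the node values $\frac{\partial\kappa}{\partial u}(s,\zeta_i^j,\varphi_m(\zeta_i^j))$, and $\varphi_m$ is only continuous; hence the kernel $\frac{\partial\kappa}{\partial u}(s,t,\varphi_m(t))$ fails to be smooth in $t$, so the moment cancellation of Lemma~\ref{lemma:3.1} — which underlies the $h^{2r}$ gain in Propositions~\ref{prop:3.2} and \ref{prop:3.4} — cannot be applied to it directly. The plan is therefore to pivot to $\mathcal{K}_m'(\varphi)$, the Nyström operator with kernel $\ell(s,t)=\frac{\partial\kappa}{\partial u}(s,t,\varphi(t))$, which is smooth because $\varphi$ is. Setting $D=\mathcal{K}_m'(\varphi_m)-\mathcal{K}_m'(\varphi)$, I would expand
\begin{align*}
& \mathcal{K}_m'(\varphi_m)(I-Q_n)\mathcal{K}_m'(\varphi_m)(I-Q_n)x \\
& = \mathcal{K}_m'(\varphi)(I-Q_n)\mathcal{K}_m'(\varphi)(I-Q_n)x + \mathcal{K}_m'(\varphi)(I-Q_n)D(I-Q_n)x \\
& \quad + D(I-Q_n)\mathcal{K}_m'(\varphi)(I-Q_n)x + D(I-Q_n)D(I-Q_n)x
\end{align*}
and estimate the four pieces separately.

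The leading piece is of the exact shape of (\ref{eq:3.27}): $\mathcal{K}_m'(\varphi)$ is the Nyström approximation of the linear operator with kernel $\ell$, and $\frac{\partial\kappa}{\partial u}\in C^{3r}(\Omega)$ together with the smoothness of $\varphi$ makes $\ell$ smooth enough. An inspection of the proof of Proposition~\ref{prop:3.4} shows that only $t$-derivatives of the kernel up to order $r$ actually enter, the $s$-derivatives up to order $2r$ acting on the $s$-slot alone; so (\ref{eq:3.27}) holds for $\ell$ provided $\frac{\partial\kappa}{\partial u}\in C^{3r}(\Omega)$ and $\varphi\in C^r[a,b]$, both available here. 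This gives $\|\mathcal{K}_m'(\varphi)(I-Q_n)\mathcal{K}_m'(\varphi)(I-Q_n)x\|_\infty=O(h^{4r})$.

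For the three remaining pieces the device is that $D$ is small in operator norm: since $\varphi,\varphi_m\in B(\varphi,\delta_0)$ for $m\geq m_2$, Proposition~\ref{prop:4.2} and the Nyström estimate (\ref{eq:2.14}) give $\|D\|\leq\gamma\|\varphi_m-\varphi\|_\infty=O(\tilde{h}^d)$. In $D(I-Q_n)\mathcal{K}_m'(\varphi)(I-Q_n)x$, Proposition~\ref{prop:3.2} bounds the inner factor by $O(h^{2r})$, so with (\ref{eq:2.20}) this piece is $O(\tilde{h}^d h^{2r})=O(h^{d+2r})$, which is $O(h^{4r})$ exactly because $d\geq 2r$. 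For $\mathcal{K}_m'(\varphi)(I-Q_n)D(I-Q_n)x$ I would first show $\|D(I-Q_n)x\|_{2r,\infty}=O(\tilde{h}^d h^r)$: differentiating $(D(I-Q_n)x)(s)$ up to $2r$ times in $s$ and using the mean value theorem in the $u$-argument produces a factor $\frac{\partial^{\beta+2}\kappa}{\partial s^\beta\partial u^2}(\cdot)\,(\varphi_m-\varphi)(\zeta_i^j)$, which is bounded because $\frac{\partial\kappa}{\partial u}\in C^{3r}(\Omega)$; then Proposition~\ref{prop:3.2} upgrades this to $O(h^{2r})\,O(\tilde{h}^d h^r)=O(\tilde{h}^d h^{3r})$. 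The last piece is crudely $O(\tilde{h}^{2d}h^r)$. Since $\tilde{h}\leq h$ and $d\geq 2r$, all three are $O(h^{4r})$ (the last two in fact $O(h^{5r})$), and the proof is complete.

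The conceptual crux is the first step — seeing that the non-smoothness of $\varphi_m$ bars a direct estimate and that one should trade $\mathcal{K}_m'(\varphi_m)$ for the smooth $\mathcal{K}_m'(\varphi)$ at the cost of the operator-norm bound on $D$. The one quantitatively tight point is the piece $D(I-Q_n)\mathcal{K}_m'(\varphi)(I-Q_n)x$, where the gains $\tilde{h}^d$ and $h^{2r}$ just meet the target rate; here the standing hypothesis $d\geq 2r$ is essential, and a coarser grouping that spends the smallness of $D$ before extracting any smoothing would forfeit powers of $h$.
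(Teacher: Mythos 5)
Your proof is correct, but it takes a genuinely different route from the paper's. The paper handles $\mathcal{K}_m'(\varphi_m)$ \emph{directly}: it sets $\ell_m(s,t)=\frac{\partial\kappa}{\partial u}(s,t,\varphi_m(t))$, asserts that $\ell_m$ is smooth, and then simply repeats the two-stage argument of Proposition \ref{prop:3.4} with $\ell_m$ in place of $\kappa$ (first (\ref{eq:5.12}), then the bound on $\|\mathcal{K}_m'(\varphi_m)(I-Q_n)x\|_{2r,\infty}$ via the kernels $D^{(\beta,0)}\ell_m$ and the constants $C_9$, $C_{10}$). The premise that motivates your detour --- that $\varphi_m$ is ``only continuous,'' so that $\ell_m$ is not smooth in $t$ --- is not actually an obstruction: since $\varphi_m=f+\mathcal{K}_m(\varphi_m)$ and $\mathcal{K}_m(\varphi_m)$ is a finite linear combination of the functions $s\mapsto\kappa(s,\zeta_i^j,\varphi_m(\zeta_i^j))$, the iterate $\varphi_m$ inherits $C^d$ smoothness from $\kappa$ and $f$, with derivatives bounded uniformly in $m$; this is what the paper (tacitly) uses. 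That said, your decomposition $\mathcal{K}_m'(\varphi_m)=\mathcal{K}_m'(\varphi)+D$ with $\|D\|\le\gamma\|\varphi_m-\varphi\|_\infty=O(\tilde h^d)$ is sound, and all four pieces close: the leading piece is (\ref{eq:3.27}) for the smooth kernel $\ell(s,t)=\frac{\partial\kappa}{\partial u}(s,t,\varphi(t))$ (your observation that only $t$-derivatives up to order $r$ of the kernel enter, so $\varphi\in C^r$ suffices, is accurate), and the cross terms and the $D$-$D$ term are $O(\tilde h^dh^{2r})$, $O(\tilde h^dh^{3r})$ and $O(\tilde h^{2d}h^r)$ respectively, all $O(h^{4r})$ because $\tilde h=h/p\le h$ and $d\ge 2r$. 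What each approach buys: the paper's is shorter and yields explicit constants, but leans on the unstated regularity of $\varphi_m$ (and its constants $C_9$, $C_{10}$ arguably omit the chain-rule contributions of $\varphi_m'$, \ldots, $\varphi_m^{(r)}$ to the total $t$-derivatives of $\ell_m$); yours only needs $\varphi_m\to\varphi$ in sup norm at rate $\tilde h^d$ plus the Lipschitz continuity of $\mathcal{K}_m'$ from Proposition \ref{prop:4.2}, at the cost of an extra decomposition and an essential reliance on $d\ge 2r$, which you correctly flag as the quantitatively tight point.
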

\begin{proof}
We have
\begin{equation}\nonumber
\mathcal {K}_m'  (\varphi_m) v (s)  =  \tilde {h}  \sum_{j=1}^m  \sum_{i=1}^\rho  w_i \;  \frac {\partial \kappa } {\partial u} (s,  \zeta_i^j, \varphi_m (\zeta_i^j)) v (\zeta_i^j), \;\;\; s \in [a, b].
\end{equation}
Let
\begin{equation*}
\ell_m (s, t) = \frac {\partial \kappa } {\partial u} (s, t, \varphi_m (t)), \;\;\; s, t \in [a, b].
\end{equation*}
Then $\ell_m \in C^{3 r} ([a, b] \times [a, b])$ and Proposition \ref{prop:3.2}  is applicable. Hence
\begin{eqnarray}\nonumber
\|\mathcal {K}_m'  (\varphi_m) (I - Q_n) x \|_\infty \leq C_4 \|\ell_m \|_{r, \infty}  \|x\|_{2 r, \infty} h^{2 r}.
\end{eqnarray}
Let 
$$ C_{9} = \max_{0 \leq i+ j \leq  r}  \max_{\stackrel {s, t \in [a, b]}{|u| \leq \|\varphi \|_\infty + \delta_0 }}
\left | \frac {\partial^{i + j  +1} \kappa } {\partial s^i \partial t^j\partial u} (s, t, u) \right |.$$
Since for $m \geq m_2,$ $\varphi_m \in B (\varphi, \delta_0),$ it follows that
\begin{equation}\nonumber
\|\ell_m \|_{r, \infty} \leq C_{9}.
\end{equation}
It then follows that
\begin{eqnarray}\label{eq:5.12}
\|\mathcal {K}_m'  (\varphi_m) (I - Q_n) x \|_\infty \leq C_4 C_{9}   \|x\|_{2 r, \infty} h^{2 r}.
\end{eqnarray}
Using the above estimate, we obtain
\begin{eqnarray}\label{eq:5.13}
\|\mathcal {K}_m'  (\varphi_m) (I - Q_n) \mathcal {K}_m'  (\varphi_m) (I - Q_n) x \|_\infty \leq C_4 C_{9} \|\mathcal {K}_m'  (\varphi_m) (I - Q_n) x\|_{2 r, \infty} h^{2 r}.
\end{eqnarray}
For $0 \leq \beta \leq 2 r,$
\begin{equation}\nonumber
\left (\mathcal {K}_m'  (\varphi_m) v \right )^{(\beta)} (s)  =  \tilde {h}  \sum_{j=1}^m  \sum_{i=1}^\rho  w_i \;  \frac {\partial^{\beta+1} \kappa } {\partial s^\beta\partial u} (s,  \zeta_i^j, \varphi_m (\zeta_i^j)) v (\zeta_i^j), \;\;\; s \in [a, b].
\end{equation}
Note that for $0 \leq \beta \leq 2 r,$
\begin{equation}\nonumber
\frac {\partial^{\beta+1} \kappa } {\partial s^\beta\partial u} (s, t, \varphi_m (t)) = D^{(\beta, 0)} \ell_m (s, t) \in
C^{3 r - \beta} ([a, b] \times [a, b]). 
\end{equation}
Let 
$$ C_{10} = \max_{ {0 \leq i + j \leq  3 r} } 
\max_{\stackrel {s, t \in [a, b]}{|u| \leq \|\varphi \|_\infty + \delta_0 }}
\left | \frac {\partial^{i  + j  +1} \kappa } {\partial s^{i } \partial t^j \partial u} (s, t, u) \right |.$$
Then for $0 \leq \beta \leq 2 r,$
\begin{equation}\nonumber
\|D^{(\beta, 0)} \ell_m \|_{r, \infty} \leq C_{10}.
\end{equation}
By Proposition \ref{prop:3.2},  for $0 \leq \beta \leq 2 r,$ we obtain
\begin{eqnarray*}
\|\left (\mathcal {K}_m'  (\varphi_m) (I - Q_n) x \right )^{(\beta)} \|_{ \infty}  & \leq & C_4 \|D^{(\beta, 0)} \ell_m \|_{r, \infty}  \|x\|_{2r, \infty} h^{2 r}\\
& \leq & C_4 C_{10} \|x\|_{2r, \infty} h^{2 r}.
\end{eqnarray*}
Hence
\begin{eqnarray}\nonumber
\|\mathcal {K}_m'  (\varphi_m) (I - Q_n) x\|_{ 2r, \infty}  & =&
\max \{ \|\left (\mathcal {K}_m'  (\varphi_m) (I - Q_n) x \right )^{(\beta)} \|_{ \infty}: 0 \leq \beta \leq 2 r\}\\\label{eq:5.14}
&\leq& C_4 C_{10} \|x\|_{2r, \infty} h^{2 r}.
\end{eqnarray}
Thus, from (\ref{eq:5.13}) and (\ref{eq:5.14}) we obtain
\begin{eqnarray}\nonumber
\|\mathcal {K}_m'  (\varphi_m) (I - Q_n) \mathcal {K}_m'  (\varphi_m) (I - Q_n) x \|_\infty\leq (C_4)^2 C_{9} C_{10}  \|x\|_{2r, \infty}  h^{4 r},
\end{eqnarray}
which completes the proof.
\end{proof}

\begin{proposition}\label{prop:5.4}
If $\displaystyle {\frac {\partial \kappa} {\partial u}  \in C^{3 r} (\Omega)},$  then for $ n \geq n_1$ and $m \geq m_2,$
\begin{equation}\label{eq:5.15}
\| \mathcal {K}_m'  (\varphi_m) (I - Q_n) \mathcal {K}_m'  (\varphi_m) \| = O (h^{2 r}).
\end{equation}
\end{proposition}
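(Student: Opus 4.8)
The plan is to observe that $\mathcal{K}_m'(\varphi_m)$ is itself a linear Nystr\"{o}m operator of the form (\ref{eq:3.3}), so that (\ref{eq:5.15}) is exactly the linear estimate of Proposition \ref{prop:3.3} applied to a suitable kernel. Setting
$$\ell_m(s,t) = \frac{\partial \kappa}{\partial u}(s,t,\varphi_m(t)), \qquad a \leq s,t \leq b,$$
one has, for $v \in C[a,b]$,
$$\mathcal{K}_m'(\varphi_m) v(s) = \tilde h \sum_{j=1}^m \sum_{i=1}^\rho w_i\, \ell_m(s,\zeta_i^j)\, v(\zeta_i^j),$$
which is precisely (\ref{eq:3.3}) with $\kappa$ replaced by $\ell_m$. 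Hence I would apply Proposition \ref{prop:3.3} verbatim, with $\ell_m$ in the role of the kernel, to obtain
$$\|\mathcal{K}_m'(\varphi_m)(I - Q_n)\mathcal{K}_m'(\varphi_m)\| \leq C_5\,\|\ell_m\|_{r,\infty}\,\|\ell_m\|_{2r,\infty}\, h^{2r},$$
so that everything reduces to checking that $\ell_m$ satisfies the hypothesis of Proposition \ref{prop:3.3} and that the two kernel norms on the right are bounded uniformly in $m$.

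Next I would verify these two points. Since $\frac{\partial\kappa}{\partial u} \in C^{3r}(\Omega) \subset C^{2r}(\Omega)$ and, for $m \geq m_2$, $\varphi_m \in B(\varphi,\delta_0)$ with $\|\varphi_m\|_\infty \leq \|\varphi\|_\infty + \delta_0$, the composition $\ell_m$ lies in $C^{2r}([a,b]\times[a,b])$, so Proposition \ref{prop:3.3} is applicable. For the uniform bounds, the constants $C_9$ and $C_{10}$ already introduced in the proof of Proposition \ref{prop:5.3} control the relevant $s$- and $t$-derivatives of $\frac{\partial\kappa}{\partial u}$ along $\varphi_m$; together with the fact that $\varphi_m$ stays in the fixed ball $B(\varphi,\delta_0)$, they yield $\|\ell_m\|_{r,\infty}$ and $\|\ell_m\|_{2r,\infty}$ bounded by constants independent of $m$ and of $n$. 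Substituting these bounds turns the right-hand side into $O(h^{2r})$, which is (\ref{eq:5.15}).

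The one step needing care is the uniform-in-$m$ control of the mixed derivatives $D^{(i,j)}\ell_m$, because differentiating $\ell_m$ in $t$ brings in derivatives of $\varphi_m$ via the chain rule, and these must stay bounded as $m \to \infty$. This is handled by differentiating the Nystr\"{o}m fixed-point relation $\varphi_m(s) = \mathcal{K}_m(\varphi_m)(s) + f(s)$ in $s$, which gives
$$\varphi_m^{(\beta)}(s) = \tilde h \sum_{j=1}^m \sum_{i=1}^\rho w_i\, \frac{\partial^\beta \kappa}{\partial s^\beta}\bigl(s,\zeta_i^j,\varphi_m(\zeta_i^j)\bigr) + f^{(\beta)}(s),$$
whose right-hand side is dominated by $(b-a)\bigl(\sum_i |w_i|\bigr)$ times a supremum of $|\partial_s^\beta\kappa|$ over the compact region $\{|u| \leq \|\varphi\|_\infty + \delta_0\}$, plus $\|f^{(\beta)}\|_\infty$, all independent of $m$. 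It is worth noting that the \emph{dominant} factor in the Proposition \ref{prop:3.3} argument, namely $\|\mathcal{K}_m'(\varphi_m)x\|_{2r,\infty}$, involves only the pure $s$-derivatives $\partial_s^\beta \ell_m = \frac{\partial^{\beta+1}\kappa}{\partial s^\beta \partial u}$, which are bounded by $C_{10}$ with no derivatives of $\varphi_m$ entering at all; the $\varphi_m$-derivatives appear only in the milder factor $\|\ell_m\|_{r,\infty}$, where orders at most $r$ are required, and these are controlled uniformly by the bound just displayed.
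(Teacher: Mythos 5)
Your proof is correct and takes essentially the same route as the paper: both reduce the claim to the linear Nystr\"{o}m estimates of Section 3 applied to the kernel $\ell_m(s,t)=\frac{\partial\kappa}{\partial u}(s,t,\varphi_m(t))$, the paper doing so by combining the estimate (\ref{eq:5.12}) (i.e.\ Proposition \ref{prop:3.2} for $\ell_m$) with the bound $\|\mathcal{K}_m'(\varphi_m)x\|_{2r,\infty}\le (b-a)C_{10}\left(\sum_{i=1}^{\rho}|w_i|\right)\|x\|_\infty$, which uses only the pure $s$-derivatives $\frac{\partial^{\beta+1}\kappa}{\partial s^\beta\partial u}$ --- exactly the unpacking of Proposition \ref{prop:3.3} you describe in your final paragraph. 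Your extra step of bounding $\varphi_m^{(\beta)}$ uniformly in $m$ by differentiating the fixed-point relation is not in the paper, but it is a legitimate and in fact welcome addition, since it justifies the chain-rule contributions to the mixed $t$-derivatives of $\ell_m$ that are implicitly absorbed into the assertion $\|\ell_m\|_{r,\infty}\le C_9$.
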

\begin{proof}
From (\ref{eq:5.12})
\begin{equation}\nonumber
\|\mathcal {K}_m'  (\varphi_m) (I - Q_n) \mathcal {K}_m'  (\varphi_m)x\|_{2 r, \infty}  \leq C_4 C_{9} \|\mathcal {K}_m'  (\varphi_m)x\|_{2r, \infty} h^{2 r}.
\end{equation}
 For  $0 \leq \beta \leq 2 r,$
\begin{eqnarray}\nonumber
\left (\mathcal {K}_m'  (\varphi_m) x \right )^{(\beta)} (s)  &=&  \tilde {h}  \sum_{j=1}^m  \sum_{i=1}^\rho  w_i \;  \frac {\partial^{\beta+1} \kappa } {\partial s^\beta\partial u} (s,  \zeta_i^j, \varphi_m (\zeta_i^j)) x (\zeta_i^j), \;\;\; s \in [a, b].
\end{eqnarray}
Then
\begin{eqnarray*}
\|\mathcal {K}_m'  (\varphi_m)    x \|_{2 r, \infty} \leq (b - a) \;  C_{10}\; \left ( \sum_{i=1}^\rho |w_i| \right ) \|x \|_\infty.
\end{eqnarray*}
Thus,
\begin{eqnarray}\nonumber
\| \mathcal {K}_m'  (\varphi_m)   (I - Q_n) \mathcal {K}_m'  (\varphi_m)   x \|_\infty 
& \leq & (b - a) \; C_4  C_{9} C_{10}  \left ( \sum_{i=1}^\rho |w_i| \right ) \|x \|_\infty \;  h^{2 r}.
\end{eqnarray}
Taking the supremum over the set $\{ x \in C [a, b]: \|x\|_\infty \leq 1 \},$ we obtain
\begin{eqnarray}\nonumber
\| \mathcal {K}_m'  (\varphi_m)   (I - Q_n) \mathcal {K}_m'  (\varphi_m)    \|
& \leq & (b - a) \; C_4 \; C_{9} C_{10}  \left ( \sum_{i=1}^\rho |w_i| \right ) \;  h^{2 r},
\end{eqnarray}
which completes the proof.
\end{proof}


\begin{proposition}\label{prop:5.5}
Let $ r \geq 1, $  $ \kappa \in C^d (\Omega)$  and $ f \in C^{d} [a, b].$ If $\displaystyle { \frac {\partial^2 \kappa} {\partial u^2}  \in C^{2 r} (\Omega)},$  then for $ n \geq n_1$ and $m \geq m_2,$
\begin{eqnarray}\nonumber
\left \| \mathcal {K}_m'  (\varphi_m)   (I - Q_n) \left [ \mathcal{K}_m (Q_n\varphi_m) - \mathcal{K}_m (\varphi_m) - \mathcal {K}_m'  (\varphi_m) (Q_n \varphi_m - \varphi_m ) \right ] \right \|_\infty = O (h^{4 r} ).\\\label{eq:5.17}
\end{eqnarray}
\end{proposition}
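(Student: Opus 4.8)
The plan is to recognise the bracketed quantity as the second-order Taylor remainder of $\mathcal{K}_m$ at $\varphi_m$ and then to exploit the smoothing estimate (\ref{eq:5.12}) already established inside the proof of Proposition \ref{prop:5.3}. Writing
\begin{equation}\nonumber
W = \mathcal{K}_m(Q_n\varphi_m) - \mathcal{K}_m(\varphi_m) - \mathcal{K}_m'(\varphi_m)(Q_n\varphi_m - \varphi_m),
\end{equation}
Proposition \ref{prop:4.1} (with $\delta = \delta_0$, $v_1 = \varphi_m$, $v_2 = Q_n\varphi_m$, which are admissible since $\varphi_m, Q_n\varphi_m \in B(\varphi,\delta_0)$ for $n \geq n_1$, $m \geq m_2$) identifies $W = R(Q_n\varphi_m - \varphi_m)$, with $R$ given by the integral (\ref{eq:4.3})--(\ref{eq:4.4}). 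Applying (\ref{eq:5.12}) with $x = W$ (legitimate since $\frac{\partial\kappa}{\partial u}\in C^{d-1}(\Omega)\subseteq C^{r}(\Omega)$ because $d\geq 2r$, and since $W\in C^{2r}[a,b]$ as shown below) gives
\begin{equation}\nonumber
\|\mathcal{K}_m'(\varphi_m)(I - Q_n)W\|_\infty \leq C_4 C_9\,\|W\|_{2r,\infty}\,h^{2r},
\end{equation}
so the whole claim reduces to proving $\|W\|_{2r,\infty} = O(h^{2r})$, which then produces the desired factor $h^{4r}$.

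The second step is to bound $\|W\|_{2r,\infty}$. Put $v = Q_n\varphi_m - \varphi_m$ and differentiate the integrand $S_\theta(v)$ of (\ref{eq:4.4}) $\beta$ times in $s$ for $0\leq\beta\leq 2r$. The key point is that the factor $v(\zeta_i^j)^2$ and the nodes $\zeta_i^j$ do not depend on $s$, so the $s$-derivatives fall only on the kernel, yielding
\begin{equation}\nonumber
(S_\theta v)^{(\beta)}(s) = \tilde h\sum_{j=1}^m\sum_{i=1}^\rho w_i\,\frac{\partial^{\beta+2}\kappa}{\partial s^\beta\,\partial u^2}\bigl(s,\zeta_i^j,\varphi_m(\zeta_i^j)+\theta\,v(\zeta_i^j)\bigr)\,v(\zeta_i^j)^2.
\end{equation}
Since $\frac{\partial^2\kappa}{\partial u^2}\in C^{2r}(\Omega)$ by hypothesis, each mixed derivative $\frac{\partial^{\beta+2}\kappa}{\partial s^\beta\,\partial u^2}$ with $\beta\leq 2r$ is bounded on the compact set $\{|u|\leq\|\varphi\|_\infty+\delta_0\}$ by a single constant $C_{11}$. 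Together with $\tilde h\,m = b-a$ this gives $\|(S_\theta v)^{(\beta)}\|_\infty \leq C_{11}(b-a)\bigl(\sum_i|w_i|\bigr)\|v\|_\infty^2$, and integrating in $\theta$ against $(1-\theta)$ halves the bound, so $\|W^{(\beta)}\|_\infty \leq \frac{1}{2}C_{11}(b-a)\bigl(\sum_i|w_i|\bigr)\|v\|_\infty^2$ uniformly for $0\leq\beta\leq 2r$. Hence $W\in C^{2r}[a,b]$ and $\|W\|_{2r,\infty}\leq \text{const}\cdot\|v\|_\infty^2$.

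Finally I would estimate $\|v\|_\infty = \|(I - Q_n)\varphi_m\|_\infty$. Because $\varphi_m = \mathcal{K}_m(\varphi_m) + f$ with $\kappa\in C^d(\Omega)$, $f\in C^d[a,b]$ and $d\geq 2r$, differentiating the Nyström representation $r$ times in $s$ shows that $\{\varphi_m\}$ is bounded in $C^r[a,b]$ uniformly in $m$ (the bound depending only on $\|\partial^r\kappa/\partial s^r\|_\infty$ over $\{|u|\leq\|\varphi\|_\infty+\delta_0\}$ and on $\|f^{(r)}\|_\infty$, using $\varphi_m\in B(\varphi,\delta_0)$). The interpolation estimate (\ref{eq:3.18}) then gives $\|v\|_\infty \leq C_3\|\varphi_m^{(r)}\|_\infty h^r = O(h^r)$, so $\|v\|_\infty^2 = O(h^{2r})$ and $\|W\|_{2r,\infty} = O(h^{2r})$; combined with the bound from (\ref{eq:5.12}) this finishes the proof. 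The main obstacle is precisely this uniform-in-$m$ control of $\|W\|_{2r,\infty}$: one must verify that the $s$-differentiation passes cleanly through the node evaluations so that no $n$- or $m$-dependent growth enters through $v$, and that the family $\{\varphi_m\}$ is uniformly bounded in $C^r$, which is what turns the quadratic remainder factor $\|v\|_\infty^2$ into a genuine $O(h^{2r})$. Everything else is a routine bookkeeping of the constants $C_3,C_4,C_9,C_{11}$.
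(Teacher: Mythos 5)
Your argument is correct and follows essentially the same route as the paper: identify the bracketed term as the Taylor remainder $R(Q_n\varphi_m-\varphi_m)$ via Proposition \ref{prop:4.1}, apply the smoothing bound (\ref{eq:5.12}), and control $\|R(Q_n\varphi_m-\varphi_m)\|_{2r,\infty}$ by differentiating $S_\theta$ in $s$ with the constant $C_{11}$, reducing everything to $\|(I-Q_n)\varphi_m\|_\infty=O(h^r)$. The only (immaterial) divergence is in that last step: you establish a uniform-in-$m$ bound on $\|\varphi_m\|_{r,\infty}$ by differentiating the Nystr\"{o}m representation and apply (\ref{eq:3.18}) to $\varphi_m$ directly, whereas the paper writes $\|(Q_n-I)\varphi_m\|_\infty\leq\|(Q_n-I)\varphi\|_\infty+(1+q)\|\varphi_m-\varphi\|_\infty=O(h^r)+O(\tilde{h}^{d})$ using (\ref{eq:2.14}) and (\ref{eq:2.20}); both yield the same conclusion.
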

\begin{proof}
Let
\begin{eqnarray}\label{**}
y_n = \mathcal{K}_m (Q_n\varphi_m) - \mathcal{K}_m (\varphi_m) - \mathcal {K}_m'  (\varphi_m) (Q_n \varphi_m - \varphi_m ) 
= R (Q_n \varphi_m - \varphi_m ) .
\end{eqnarray}
By (\ref{eq:5.12})
\begin{eqnarray}\label{eq:5.18}
\| \mathcal{K}_m' (\varphi_m)(I - Q_n) y_n \|_\infty \leq C_4 C_{9} \; \|y_n \|_{2 r, \infty} \;  h^{2 r}.
\end{eqnarray}
Recall from (\ref{eq:4.3}) and (\ref{eq:4.4}) that
 \begin{eqnarray}\nonumber
R   (v_2 - v_1)  (s) 
&=& \int_0^1   (1 - \theta)   S_\theta  (v_2 - v_1 ) (s)   d \theta,
\end{eqnarray}
with 
\begin{eqnarray}\nonumber
S_\theta  (v_2 - v_1)  (s) 
& = & \tilde {h}  \sum_{j=1}^m  \sum_{i=1}^\rho  w_i \;  \frac {\partial^2 \kappa } {\partial u^2} \left (s,  \zeta_i^j, 
v_1  (\zeta_i^j) + \theta
 (v_2 - v_1)  (\zeta_i^j) \right ) (v_2 - v_1)^2 (\zeta_i^j).
\end{eqnarray}
Let
$$ C_{11} = \max_{0 \leq \beta \leq 2 r}  \max_{\stackrel {s, t \in [a, b]}{|u| \leq \|\varphi \|_\infty + \delta_0 }}
\left | \frac {\partial^{\beta +2} \kappa } {\partial s^\beta\partial u^2} (s, t, u) \right |.$$
Then for $0 \leq \beta \leq 2 r,$
\begin{equation}\nonumber
\|( R  (v_2 - v_1)  )^{(\beta)}\|_\infty \leq  \frac {C_{11} (b - a)} {2}  \left ( \sum_{i=1}^\rho | w_i | \right ) \|v_2 - v_1 \|_\infty^2
\end{equation}
and
\begin{eqnarray}\label{eq:5.19}
\|y_n \|_{2 r, \infty} = \|R (Q_n \varphi_m - \varphi_m )\|_{2 r, \infty } \leq 
\frac {C_{11} (b - a)} {2}  \left ( \sum_{i=1}^\rho | w_i | \right ) \|Q_n \varphi_m - \varphi_m \|_\infty^2.
\end{eqnarray}
From (\ref{eq:2.14}),  (\ref{eq:2.20}) and (\ref{eq:3.18}),   
\begin{eqnarray}\nonumber
\|(Q_n - I ) \varphi_m \|_\infty & \leq & \|(Q_n - I ) \varphi \|_\infty + \|(Q_n - I )( \varphi_m - \varphi ) \|_\infty \\\nonumber
& \leq & C_3 \|\varphi^{(r)}\|_\infty h^r + O \left (  \tilde{h}^{d} \right )
\\\label{eq:5.20}
&=& O (h^r).
\end{eqnarray}
The required result then follows from (\ref{eq:5.18}) - (\ref{eq:5.20}).
\end{proof}

\begin{proposition}\label{prop:5.6}
Let $ \kappa \in C^d (\Omega), $ $\displaystyle {\frac {\partial \kappa} {\partial u} \in C^{2 r} (\Omega)}$ and $ f \in C^{d} [a, b].$  
Then for $ n \geq n_1$ and $m \geq m_2,$
\begin{eqnarray}\nonumber
&&\left \|\tilde {\mathcal{K}}_n^M (z_n^M) - \tilde{\mathcal{K}}_n^M (\varphi_m)  - \left (  \tilde{\mathcal{K}}_n^M \right )' (\varphi_m) 
(z_n^M - \varphi_m) \right \|_\infty = O \left (\max \left \{ \tilde{h}^{d}, h^{3 r} \right \}^2 \right).
\\\label{eq:5.21}
\end{eqnarray}
\end{proposition}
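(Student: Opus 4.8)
The plan is to use that $\tilde{\mathcal{K}}_n^M$ is built entirely from the single twice-Fr\'echet-differentiable operator $\mathcal{K}_m$, so that the second-order Taylor remainder on the left-hand side of (\ref{eq:5.21}) decomposes into Taylor remainders of $\mathcal{K}_m$, each of which Proposition \ref{prop:4.1} bounds quadratically in its increment. Throughout write $v = z_n^M - \varphi_m$. From (\ref{eq:2.23}) we have $\tilde{\mathcal{K}}_n^M(x) = Q_n \mathcal{K}_m(x) + \mathcal{K}_m(Q_n x) - Q_n \mathcal{K}_m(Q_n x)$, and from the proof of Proposition \ref{prop:4.3} its derivative is $\left(\tilde{\mathcal{K}}_n^M\right)'(\varphi_m) = Q_n \mathcal{K}_m'(\varphi_m) + (I - Q_n)\mathcal{K}_m'(Q_n \varphi_m) Q_n$.

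First I would carry out the algebra of subtracting the linear part term by term. Grouping the three constituent pieces of $\tilde{\mathcal{K}}_n^M$ produces
$$\tilde{\mathcal{K}}_n^M(z_n^M) - \tilde{\mathcal{K}}_n^M(\varphi_m) - \left(\tilde{\mathcal{K}}_n^M\right)'(\varphi_m) v = Q_n \Delta_1 + (I - Q_n)\Delta_2,$$
where $\Delta_1 = \mathcal{K}_m(z_n^M) - \mathcal{K}_m(\varphi_m) - \mathcal{K}_m'(\varphi_m) v$ and $\Delta_2 = \mathcal{K}_m(Q_n z_n^M) - \mathcal{K}_m(Q_n \varphi_m) - \mathcal{K}_m'(Q_n \varphi_m)(Q_n v)$. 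The key observation is that each $\Delta_i$ is exactly the remainder $R(\cdot)$ of Proposition \ref{prop:4.1}: $\Delta_1 = R(v)$ with base point $v_1 = \varphi_m$, and $\Delta_2 = R(Q_n v)$ with base point $v_1 = Q_n \varphi_m$.

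Next I would apply the quadratic bound of Proposition \ref{prop:4.1} to each remainder together with the uniform bound $\|Q_n\| \leq q$ from (\ref{eq:2.20}), giving
$$\left\|\tilde{\mathcal{K}}_n^M(z_n^M) - \tilde{\mathcal{K}}_n^M(\varphi_m) - \left(\tilde{\mathcal{K}}_n^M\right)'(\varphi_m) v\right\|_\infty \leq \frac{C_6 (b-a)}{2}\left(\sum_{i=1}^\rho |w_i|\right)\left(q + (1+q)q^2\right)\|v\|_\infty^2,$$
so the expression is $O(\|v\|_\infty^2)$. It then remains to estimate $\|v\|_\infty$: by the triangle inequality $\|v\|_\infty \leq \|z_n^M - \varphi\|_\infty + \|\varphi - \varphi_m\|_\infty$, and Theorem \ref{thm:4.6} together with (\ref{eq:2.14}) give $\|v\|_\infty = O(\max\{\tilde{h}^d, h^{3r}\})$. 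Squaring yields the claimed order $O(\max\{\tilde{h}^d, h^{3r}\}^2)$. This is the only place the hypothesis $\frac{\partial \kappa}{\partial u} \in C^{2r}(\Omega)$ is used, namely to invoke Theorem \ref{thm:4.6}; Proposition \ref{prop:4.1} itself needs only $\frac{\partial^2 \kappa}{\partial u^2} \in C(\Omega)$.

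The main obstacle is not the estimation but justifying that Proposition \ref{prop:4.1} applies, which requires both the base point and the perturbed point of each remainder to lie in $B(\varphi, \delta_0)$. For $\Delta_1$ this holds because $z_n^M, \varphi_m \in B(\varphi, \delta_0)$ for $n \geq n_1$ and $m \geq m_2$. For $\Delta_2$ I additionally need $Q_n z_n^M \in B(\varphi, \delta_0)$; since $\|Q_n z_n^M - \varphi\|_\infty \leq q\|z_n^M - \varphi\|_\infty + \|Q_n \varphi - \varphi\|_\infty \to 0$ as $n \to \infty$, this can be secured by enlarging $n_1$, which we may assume without loss of generality exactly as in Remark \ref{remark:4.4}.
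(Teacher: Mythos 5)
Your proof is correct, and it reaches the same quadratic bound as the paper by a slightly different organization of the same underlying idea. The paper applies the generalized Taylor theorem with integral remainder directly to the composite operator $\tilde{\mathcal{K}}_n^M$, computes its second Fr\'echet derivative $\left(\tilde{\mathcal{K}}_n^M\right)''(x) = Q_n \mathcal{K}_m''(x) + (I-Q_n)\mathcal{K}_m''(Q_n x)(Q_n\otimes Q_n)$, and bounds that derivative uniformly by $C_6 (b-a) q(1+q+q^2)\sum_{i}|w_i|$ before multiplying by $\tfrac{1}{2}\|z_n^M-\varphi_m\|_\infty^2$. You instead split the second-order remainder of $\tilde{\mathcal{K}}_n^M$ into $Q_n\Delta_1 + (I-Q_n)\Delta_2$, where $\Delta_1$ and $\Delta_2$ are first-order Taylor remainders of $\mathcal{K}_m$ at the base points $\varphi_m$ and $Q_n\varphi_m$ with increments $v$ and $Q_n v$, and then invoke Proposition \ref{prop:4.1} twice as a black box; your decomposition is exactly the integrated form of the paper's second-derivative formula, and the constants agree ($q + (1+q)q^2 = q(1+q+q^2)$). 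What your route buys is that you never have to differentiate the composite operator twice, and you are forced to make explicit the domain condition $Q_n z_n^M \in B(\varphi,\delta_0)$ needed to apply the $C_6$ bound --- a condition the paper also tacitly requires when bounding $\mathcal{K}_m''\left(Q_n\varphi_m + \theta\, Q_n(z_n^M-\varphi_m)\right)$ but dispatches with ``in a similar manner''; your observation that it can be secured by enlarging $n_1$, exactly as in Remark \ref{remark:4.4}, closes that point cleanly. The final step, bounding $\|z_n^M-\varphi_m\|_\infty$ via Theorem \ref{thm:4.6} and (\ref{eq:2.14}) and squaring, is identical in both arguments.
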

\begin{proof}

Note that $\displaystyle {\varphi_m, z_n^M \in B \left (\varphi_m, \delta_0 \right ). }$
By the generalized Taylor's theorem,
\begin{eqnarray*}
&&\tilde {\mathcal{K}}_n^M (z_n^M) - \tilde{\mathcal{K}}_n^M (\varphi_m)  - \left (  \tilde{\mathcal{K}}_n^M \right )' (\varphi_m) 
(z_n^M - \varphi_m) \\
&& \hspace*{0.5 in} = 
\int_0^1 ( 1 - \theta) \left (  \tilde{\mathcal{K}}_n^M \right )'' \left (\varphi_m + \theta (z_n^M - \varphi_m) \right ) (z_n^M - \varphi_m)^2
(s) \; d \theta.
\end{eqnarray*}
Hence
\begin{eqnarray}\nonumber
&&\left \|\tilde {\mathcal{K}}_n^M (z_n^M) - \tilde{\mathcal{K}}_n^M (\varphi_m)  - \left (  \tilde{\mathcal{K}}_n^M \right )' (\varphi_m) 
(z_n^M - \varphi_m) \right \|_\infty \\\label{eq:5.22}
&& \hspace*{0.5 in} \leq \frac {1} {2} \max_{0 \leq \theta \leq 1}  
\left \|\left (\tilde {\mathcal{K}}_n^M \right )'' \left  (\varphi_m + \theta (z_n^M - \varphi_m) \right ) \right \| \| z_n^M - \varphi_m\|_\infty^2.
\end{eqnarray}
Note that for  $x \in B (\varphi, \delta_0),$ 
\begin{eqnarray}\nonumber
(\tilde{\mathcal{K}}_n^M)'' (x) = Q_n \mathcal{K}_m'' (x ) + (I - Q_n) \mathcal{K}_m'' (Q_n x ) (Q_n \otimes Q_n),
\end{eqnarray}
where $Q_n \otimes Q_n: X \times X \rightarrow X \times X$ is defined as
$$(Q_n \otimes Q_n) (v, w) = (Q_n v, Q_n w).$$
From (\ref{eq:2.10}) for $ s \in [a, b],$
\begin{eqnarray*}
&&(\mathcal {K}_m)''  \left  (\varphi_m + \theta (z_n^M - \varphi_m) \right ) (v_1, v_2)  (s) \\
&& \hspace*{0.5 in} =  \tilde {h}  \sum_{j=1}^m  \sum_{i=1}^\rho  w_i \;  \frac {\partial^2 \kappa } {\partial u^2} \left (s,  \zeta_i^j, 
\left  (\varphi_m + \theta  (z_n^M - \varphi_m) \right ) (\zeta_i^j)   \right  ) v_1 (\zeta_i^j) v_2 (\zeta_i^j).
\end{eqnarray*}
Then
 for $0 \leq \theta \leq 1,$
$$ \left \| (\mathcal {K}_m)''  \left  (\varphi_m + \theta (z_n^M - \varphi_m) \right ) \right \| \leq C_{6} (b - a) \left ( \sum_{i=1}^\rho |w_i| \right ),$$
where $C_6$ is defined in (\ref{eq:4.1}) with $\delta = \delta_0.$

In a similar manner, it can be shown that for $0 \leq \theta \leq 1,$ 
$$ \left \| (\mathcal {K}_m)''  \left  (Q_n \varphi_m + \theta (Q_n z_n^M - Q_n \varphi_m) \right ) \right \| \leq C_{6} (b - a) \left ( \sum_{i=1}^\rho |w_i| \right ).$$
Hence
\begin{eqnarray}\label{eq:5.23}
\max_{0 \leq \theta \leq 1}  
\left \|\left (\tilde {\mathcal{K}}_n^M \right )'' \left  (\varphi_m + \theta (z_n^M - \varphi_m) \right ) \right \|
\leq C_{6} (b - a) q (1 + q + q^2) \left ( \sum_{i=1}^\rho |w_i| \right ).
\end{eqnarray}
 By Theorem \ref{thm:4.6} and by (\ref{eq:2.14}),
$$ \|z_n^M - \varphi_m \|_\infty \leq \|z_n^M - \varphi \|_\infty + \|\varphi - \varphi_m \|_\infty= O \left (\max \left \{ \tilde{h}^{d},  h^{3 r} \right \} \right),$$
The required result follows from (\ref{eq:5.22}),  (\ref{eq:5.23}) and the above estimate..
\end{proof}

\begin{proposition}\label{prop:5.7}
Let $ \kappa \in C^d (\Omega), $ $\displaystyle {\frac {\partial \kappa} {\partial u} \in C^{3 r} (\Omega)}$ and $ f \in C^{d} [a, b].$  
Then for $ n \geq n_1$ and $m \geq m_2,$
\begin{eqnarray}\label{eq:5.24}
\left \|\mathcal{K}_m'   (\varphi_m) 
\left (\left (  \tilde{\mathcal{K}}_n^M \right )' (\varphi_m)  -  \mathcal{K}_m'   (\varphi_m) \right  ) (z_n^M - \varphi_m) 
\right \|_\infty
=O \left ( h^r \max \left \{ \tilde{h}^{d},  h^{3 r} \right \} \right).
\end{eqnarray}
\end{proposition}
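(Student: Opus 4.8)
The plan is to estimate the operator standing in front of $z_n^M-\varphi_m$ in operator norm and then multiply by the scalar bound $\|z_n^M-\varphi_m\|_\infty=O(\max\{\tilde h^{d},h^{3r}\})$, which follows from Theorem \ref{thm:4.6} together with (\ref{eq:2.14}). The reason one cannot simply apply Proposition \ref{prop:5.3} to the vector $z_n^M-\varphi_m$ is that $z_n^M$ solves only the discrete modified projection equation and is \emph{not} smooth (it is a sum of a piecewise polynomial and a Nystr\"om term), so no $C^{2r}$ estimate for $z_n^M-\varphi_m$ is available; the bounds must therefore be carried out at the level of operators, where they hold for an arbitrary argument.

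First I would reuse the decomposition established in the proof of Proposition \ref{prop:4.3}, with $\varphi$ replaced by $\varphi_m$, namely
\begin{equation*}
\left(\tilde{\mathcal{K}}_n^M\right)'(\varphi_m)-\mathcal{K}_m'(\varphi_m)
=-(I-Q_n)\mathcal{K}_m'(\varphi_m)(I-Q_n)
-(I-Q_n)\bigl(\mathcal{K}_m'(\varphi_m)-\mathcal{K}_m'(Q_n\varphi_m)\bigr)Q_n .
\end{equation*}
Multiplying on the left by $\mathcal{K}_m'(\varphi_m)$ splits the quantity of interest into two pieces. The first piece, $\mathcal{K}_m'(\varphi_m)(I-Q_n)\mathcal{K}_m'(\varphi_m)(I-Q_n)$, has operator norm $O(h^{2r})$: I would peel off the trailing $I-Q_n$ (whose norm is at most $1+q$ by (\ref{eq:2.20})) and invoke Proposition \ref{prop:5.4}, i.e.\ (\ref{eq:5.15}).

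The main work is the second piece, $\mathcal{K}_m'(\varphi_m)(I-Q_n)\bigl(\mathcal{K}_m'(\varphi_m)-\mathcal{K}_m'(Q_n\varphi_m)\bigr)Q_n$. Here I would first rewrite the difference $\mathcal{K}_m'(\varphi_m)-\mathcal{K}_m'(Q_n\varphi_m)$ via the mean value theorem in the $u$-variable exactly as in Proposition \ref{prop:4.2}, producing a $\frac{\partial^2\kappa}{\partial u^2}$-weighted quadrature sum carrying the factor $(\varphi_m-Q_n\varphi_m)(\zeta_i^j)$. Differentiating this expression $\beta$ times in $s$ for $0\le\beta\le 2r$ (legitimate because $\frac{\partial\kappa}{\partial u}\in C^{3r}(\Omega)$, hence $\frac{\partial^2\kappa}{\partial u^2}\in C^{2r}(\Omega)$), and using $\tilde h\,m=b-a$, $\|Q_n\|\le q$, together with $\|\varphi_m-Q_n\varphi_m\|_\infty=O(h^r)$ from (\ref{eq:5.20}), I expect
\begin{equation*}
\bigl\|\bigl(\mathcal{K}_m'(\varphi_m)-\mathcal{K}_m'(Q_n\varphi_m)\bigr)Q_n v\bigr\|_{2r,\infty}=O(h^r)\,\|v\|_\infty .
\end{equation*}
Feeding this into (\ref{eq:5.12}) gains a further factor $h^{2r}$, so the second piece has operator norm $O(h^{3r})$.

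Combining the two pieces gives operator norm $O(h^{2r})$ for $\mathcal{K}_m'(\varphi_m)\bigl[(\tilde{\mathcal{K}}_n^M)'(\varphi_m)-\mathcal{K}_m'(\varphi_m)\bigr]$; multiplying by $\|z_n^M-\varphi_m\|_\infty=O(\max\{\tilde h^{d},h^{3r}\})$ yields $O\!\left(h^{2r}\max\{\tilde h^{d},h^{3r}\}\right)$, which is in particular $O\!\left(h^{r}\max\{\tilde h^{d},h^{3r}\}\right)$ since $h<1$, establishing (\ref{eq:5.24}). The only delicate step is the $\|\cdot\|_{2r,\infty}$ estimate of the second piece, where the smoothing supplied by the mean value theorem must be tracked carefully through all $s$-derivatives up to order $2r$; the remaining steps are direct appeals to results already proved.
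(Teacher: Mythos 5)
Your proof is correct and follows essentially the same route as the paper: the identical decomposition of $\left(\tilde{\mathcal{K}}_n^M\right)'(\varphi_m)-\mathcal{K}_m'(\varphi_m)$ into $-(I-Q_n)\mathcal{K}_m'(\varphi_m)(I-Q_n)$ plus a Lipschitz-type difference term, the same use of Proposition \ref{prop:5.4} for the first piece, and the same final multiplication by $\|z_n^M-\varphi_m\|_\infty=O(\max\{\tilde h^{d},h^{3r}\})$. The only divergence is in the second piece, where the paper simply bounds $\|\mathcal{K}_m'(\varphi_m)-\mathcal{K}_m'(Q_n\varphi_m)\|\leq\gamma\|Q_n\varphi_m-\varphi_m\|_\infty=O(h^r)$ via Proposition \ref{prop:4.2} and (\ref{eq:5.20}) and accepts an overall operator bound of $O(h^r)$, whereas you additionally exploit the interior $(I-Q_n)$ through a $\|\cdot\|_{2r,\infty}$ estimate to push that piece to $O(h^{3r})$; your extra work is sound (the needed derivatives of $\partial^2\kappa/\partial u^2$ exist since $\partial\kappa/\partial u\in C^{3r}(\Omega)$) and yields the sharper operator bound $O(h^{2r})$, but it is not needed for (\ref{eq:5.24}).
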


\begin{proof}
Note that 
\begin{eqnarray}\nonumber
(\tilde{\mathcal{K}}_n^M)' (\varphi_m) = Q_n \mathcal{K}_m' ( \varphi_m ) + (I - Q_n) \mathcal{K}_m'  (Q_n \varphi_m ) Q_n.
\end{eqnarray}
Hence
\begin{eqnarray*}
\mathcal{K}_m'   (\varphi_m) \left (\left (  \tilde{\mathcal{K}}_n^M \right )' (\varphi_m)  -  \mathcal{K}_m'   (\varphi_m) \right  ) &= &
- \mathcal{K}_m'   (\varphi_m)  (I - Q_n) \mathcal{K}_m'   (\varphi_m) \\
& + & \mathcal{K}_m'   (\varphi_m) (I - Q_n) \mathcal{K}_m'  (Q_n \varphi_m ) Q_n\\
& = &\mathcal{K}_m'   (\varphi_m) (I - Q_n) (\mathcal{K}_m'  (Q_n \varphi_m )  -  \mathcal{K}_m'   (\varphi_m)) Q_n \\
& - & \mathcal{K}_m'   (\varphi_m)  (I - Q_n) \mathcal{K}_m'   (\varphi_m) (I - Q_n).
\end{eqnarray*}
By Proposition \ref{prop:5.4},
$$\| \mathcal{K}_m'   (\varphi_m)  (I - Q_n) \mathcal{K}_m'   (\varphi_m) \| = O (h^{ 2 r}).$$
By (\ref{eq:4.7}) and (\ref{eq:5.20}),
\begin{eqnarray*}
\|\mathcal{K}_m'  (Q_n \varphi_m )  -  \mathcal{K}_m'   (\varphi_m)\|\leq \gamma \|Q_n \varphi_m - \varphi_m \|_\infty = O (h^r).
\end{eqnarray*}
Let
$$ C_{12} =   \max_{\stackrel {s, t \in [a, b]}{|u| \leq \|\varphi \|_\infty + \delta_0 }}
\left | \frac {\partial \kappa } {\partial u} (s, t, u) \right |.$$
Then
\begin{eqnarray*}
\|\mathcal{K}_m'   (\varphi_m)  x \|_\infty \leq C_{12} (b - a) \left ( \sum_{i=1}^\rho |w_i| \right ) \|x\|_\infty.
\end{eqnarray*}
Hence
$$\|\mathcal{K}_m'   (\varphi_m) \| \leq C_{12} (b - a) \left ( \sum_{i=1}^\rho |w_i| \right ).$$
Thus,
\begin{eqnarray*}
\left \|\mathcal{K}_m'   (\varphi_m) \left (\left (  \tilde{\mathcal{K}}_n^M \right )' (\varphi_m)  -  \mathcal{K}_m'   (\varphi_m) \right  ) \right \|= O (h^r).
\end{eqnarray*}
Since
$$ \|z_n^M - \varphi \|_\infty = O \left (\max \left \{ \tilde{h}^{d},  h^{3 r} \right \} \right),$$
the required result follows.
\end{proof}

In the following theorem we obtain the order of convergence in the discrete iterated modified projection method.


\begin{theorem}\label{thm:5.8}
Let $ r \geq 1, $ $ \kappa \in C^d (\Omega), $ $\displaystyle {\frac {\partial^2 \kappa} {\partial u^2} \in C^{3 r} (\Omega)}$ and $ f \in C^{d} [a, b].$ 
Let $\varphi$ be the unique solution of (\ref{eq:1.2}) and assume that $1$ is not an eigenvalue of $\mathcal{K}' (\varphi).$ 
Let $\mathcal{X}_n$ be the space of piecewise polynomials of degree $\leq r - 1$ with respect to the partition (\ref{eq:2.15})
and $Q_n: L^\infty [0, 1] \rightarrow \mathcal {X}_n$ be the interpolatory projection at $r$ Gauss points defined by
(\ref{eq:2.19}). Let $z_n^M $ be the unique solution of (\ref{eq:2.24}) in $B (\varphi, \delta_0).$ Let $\tilde{z}_n^M$ be the discrete iterated modified projection solution defined by (\ref{eq:5.1}). Then
\begin{eqnarray}\label{eq:5.25}
\|\tilde{z}_n^M - \varphi \|_\infty = O (h^r \max \{\tilde{h}^{d}, h^{3 r} \}).
\end{eqnarray}
\end{theorem}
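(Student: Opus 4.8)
The plan is to pass through the Nyström solution $\varphi_m$ and write $\tilde z_n^M - \varphi = (\tilde z_n^M - \varphi_m) + (\varphi_m - \varphi)$, since the second difference is already controlled by the Nyström estimate (\ref{eq:2.14}), namely $\|\varphi_m - \varphi\|_\infty = O(\tilde h^d)$. All the effort then goes into the first difference, and the natural starting point is the identity of Proposition \ref{prop:5.1}, $\tilde z_n^M - \varphi_m = \mathcal{K}_m'(\varphi_m)(z_n^M - \varphi_m) + O(\max\{\tilde h^d, h^{3r}\}^2)$. Because $d \geq 2r$ forces $\max\{\tilde h^d, h^{3r}\} \leq h^r$ for all $h$ small, the quadratic remainder is already $O(h^r\max\{\tilde h^d, h^{3r}\})$ and can be set aside; everything thus reduces to showing $\|\mathcal{K}_m'(\varphi_m)(z_n^M - \varphi_m)\|_\infty = O(h^r\max\{\tilde h^d, h^{3r}\})$.

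For this I would invoke the three-fold decomposition (\ref{eq:5.8}), in which $\mathcal{K}_m'(\varphi_m)(z_n^M - \varphi_m)$ is written as a sum of three pieces, each carrying the prefactor $[I - \mathcal{K}_m'(\varphi_m)]^{-1}\mathcal{K}_m'(\varphi_m)$. This prefactor is uniformly bounded, since $\|[I - \mathcal{K}_m'(\varphi_m)]^{-1}\| \leq 2C_7$ by Proposition \ref{prop:5.2} and $\|\mathcal{K}_m'(\varphi_m)\| \leq C_{12}(b-a)\sum_{i=1}^\rho |w_i|$. The second piece is driven by the second-order Taylor remainder $\tilde{\mathcal{K}}_n^M(z_n^M) - \tilde{\mathcal{K}}_n^M(\varphi_m) - (\tilde{\mathcal{K}}_n^M)'(\varphi_m)(z_n^M - \varphi_m)$ and is $O(\max\{\tilde h^d, h^{3r}\}^2)$ by Proposition \ref{prop:5.6}; the third piece is exactly the quantity bounded in Proposition \ref{prop:5.7} and is $O(h^r\max\{\tilde h^d, h^{3r}\})$. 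Both therefore stay within the target order.

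The delicate piece, and the step I expect to be the main obstacle, is the first, $[I - \mathcal{K}_m'(\varphi_m)]^{-1}\mathcal{K}_m'(\varphi_m)\{\mathcal{K}_m(\varphi_m) - \tilde{\mathcal{K}}_n^M(\varphi_m)\}$. Here I would first exploit the definition (\ref{eq:2.23}) to collapse the inner factor to $\mathcal{K}_m(\varphi_m) - \tilde{\mathcal{K}}_n^M(\varphi_m) = (I - Q_n)[\mathcal{K}_m(\varphi_m) - \mathcal{K}_m(Q_n\varphi_m)]$, then apply the generalized Taylor expansion of Proposition \ref{prop:4.1} to $\mathcal{K}_m(Q_n\varphi_m) - \mathcal{K}_m(\varphi_m)$ about $\varphi_m$. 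Writing $Q_n\varphi_m - \varphi_m = -(I - Q_n)\varphi_m$, the linear part produces $\mathcal{K}_m'(\varphi_m)(I - Q_n)\mathcal{K}_m'(\varphi_m)(I - Q_n)\varphi_m$, which is $O(h^{4r})$ by Proposition \ref{prop:5.3} applied with $x = \varphi_m \in C^{2r}[a,b]$, while the quadratic remainder produces $\mathcal{K}_m'(\varphi_m)(I - Q_n)R(Q_n\varphi_m - \varphi_m)$, precisely the expression estimated in Proposition \ref{prop:5.5} and again $O(h^{4r})$. The regularity hypothesis $\frac{\partial^2\kappa}{\partial u^2} \in C^{3r}(\Omega)$ together with $\kappa \in C^d(\Omega)$ is what makes Propositions \ref{prop:5.3}, \ref{prop:5.5}, \ref{prop:5.6} and \ref{prop:5.7} simultaneously applicable: it guarantees that $\ell_m(s,t) = \frac{\partial\kappa}{\partial u}(s,t,\varphi_m(t))$ and its $s$-derivatives are smooth enough, and it controls the $s$-derivatives of the second-order remainder $R$.

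Collecting the three pieces, the first being $O(h^{4r})$ and the remaining two being $O(h^r\max\{\tilde h^d, h^{3r}\})$, together with the discarded quadratic remainder from Proposition \ref{prop:5.1}, yields $\|\tilde z_n^M - \varphi_m\|_\infty = O(h^r\max\{\tilde h^d, h^{3r}\})$. Adding the Nyström error $\|\varphi_m - \varphi\|_\infty = O(\tilde h^d)$ from (\ref{eq:2.14}) and applying the triangle inequality then completes the proof of (\ref{eq:5.25}). Beyond citing the assembled propositions, the only genuinely new ingredients are the algebraic rewriting of $\mathcal{K}_m(\varphi_m) - \tilde{\mathcal{K}}_n^M(\varphi_m)$ and the elementary checks that $\max\{\tilde h^d, h^{3r}\}^2$ and $h^{4r}$ are both dominated by $h^r\max\{\tilde h^d, h^{3r}\}$, both of which are routine.
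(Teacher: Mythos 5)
Your proposal reproduces the paper's proof of Theorem~\ref{thm:5.8} almost step for step: the identity of Proposition~\ref{prop:5.1}, the three-term decomposition (\ref{eq:5.8}), the uniform bound on $[I-\mathcal{K}_m'(\varphi_m)]^{-1}\mathcal{K}_m'(\varphi_m)$, and Propositions~\ref{prop:5.5}, \ref{prop:5.6}, \ref{prop:5.7} for the three pieces. The one place you genuinely deviate is the linear part of the first piece: you bound $\mathcal{K}_m'(\varphi_m)(I-Q_n)\mathcal{K}_m'(\varphi_m)(I-Q_n)\varphi_m$ by applying Proposition~\ref{prop:5.3} directly with $x=\varphi_m$. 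The constant hidden in that proposition's $O(h^{4r})$ is proportional to $\|x\|_{2r,\infty}$, so your step silently requires $\sup_m\|\varphi_m\|_{2r,\infty}<\infty$, which you do not verify. It is in fact true (differentiate $\varphi_m=\mathcal{K}_m(\varphi_m)+f$ in $s$ and use $\kappa\in C^d(\Omega)$, $f\in C^d[a,b]$ with $d\ge 2r$), but the paper avoids having to prove it by writing $(I-Q_n)\varphi_m=(I-Q_n)\varphi+(I-Q_n)(\varphi_m-\varphi)$, applying Proposition~\ref{prop:5.3} to the fixed smooth function $\varphi$, and handling the difference with the operator-norm estimate of Proposition~\ref{prop:5.4} together with $\|\varphi_m-\varphi\|_\infty=O(\tilde h^d)$ and $d\ge 2r$; that is exactly (\ref{eq:5.29}). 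Either supply the uniform $C^{2r}$ bound on $\varphi_m$ or adopt the paper's splitting.

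One further caution, which your write-up shares with the paper but makes more explicit: the final triangle inequality $\|\tilde{z}_n^M-\varphi\|_\infty\le\|\tilde{z}_n^M-\varphi_m\|_\infty+\|\varphi_m-\varphi\|_\infty$ contributes a term $O(\tilde h^d)$ from (\ref{eq:2.14}), and $\tilde h^d$ is not in general $O\left(h^r\max\{\tilde h^d,h^{3r}\}\right)$ (take $\tilde h^d\ge h^{3r}$, where the target is $h^r\tilde h^d\ll\tilde h^d$). What the argument as written actually delivers is $O\left(\max\{\tilde h^d,\,h^r\max\{\tilde h^d,h^{3r}\}\}\right)=O\left(\max\{\tilde h^d,h^{4r}\}\right)$, consistent with the linear-case Theorem~\ref{thm:1.5}. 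This is not a defect of your proposal relative to the paper --- its own proof ends the same way --- but labelling the step ``routine'' hides a point that deserves either a corrected statement or an additional argument.
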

\begin{proof}
Recall from (\ref{eq:5.2}) that
\begin{equation}\label{eq:5.26}
\tilde{z}_n^M - \varphi_m = \mathcal {K}_m' (\varphi_m) (z_n^M - \varphi_m) + O (\max \{\tilde{h}^{d}, h^{3 r} \}^2)
\end{equation}
and from (\ref{eq:5.8}) that
\begin{eqnarray}\nonumber
&&\mathcal{K}_m'   (\varphi_m) ( z_n^M - \varphi_m ) \\\nonumber
& = &  
 - \left [ I - {\mathcal{K}}_m'   (\varphi_m) \right]^{-1}  \mathcal{K}_m'   (\varphi_m) \left \{  \mathcal {K}_m (\varphi_m) -  \tilde{\mathcal{K}}_n^M (\varphi_m) \right \}\\\nonumber
&&+ \left [ I - {\mathcal{K}}_m'   (\varphi_m) \right]^{-1} \mathcal{K}_m'   (\varphi_m) \left \{
\tilde{\mathcal{K}}_n^M (z_n^M) - \tilde{\mathcal{K}}_n^M (\varphi_m)  - \left (  \tilde{\mathcal{K}}_n^M \right )' (\varphi_m) 
(z_n^M - \varphi_m) \right \}\\\label{eq:5.27}
&& + \left [ I - {\mathcal{K}}_m'   (\varphi_m) \right]^{-1} \mathcal{K}_m'   (\varphi_m) \left \{
\left (\left (  \tilde{\mathcal{K}}_n^M \right )' (\varphi_m)  -  \mathcal{K}_m'   (\varphi_m) \right  ) (z_n^M - \varphi_m) \right \}.
\end{eqnarray}
From (\ref{eq:5.6}) we have
$$ \left \|\left [ I - {\mathcal{K}}_m'   (\varphi_m) \right]^{-1} \right \| \leq 2 C_{7}.$$
Note that
\begin{eqnarray}\nonumber
\mathcal{K}_m (\varphi_m) - \tilde{\mathcal{K}}_n^M (\varphi_m)  &= & (I - Q_n) (\mathcal{K}_m (\varphi_m) - \mathcal{K}_m (Q_n\varphi_m))\\\nonumber
& = & - (I - Q_n) (\mathcal{K}_m (Q_n\varphi_m) - \mathcal{K}_m (\varphi_m) - \mathcal {K}_m'  (\varphi_m) (Q_n \varphi_m - \varphi_m ) )\\\nonumber
&& -  (I - Q_n) \mathcal {K}_m'  (\varphi_m) (Q_n \varphi_m - \varphi_m ). 
\end{eqnarray}
By Proposition \ref{prop:5.5},
\begin{eqnarray}\nonumber
\left \|\mathcal{K}_m'   (\varphi_m) (I - Q_n) (\mathcal{K}_m (Q_n\varphi_m) - \mathcal{K}_m (\varphi_m) - \mathcal {K}_m'  (\varphi_m) (Q_n \varphi_m - \varphi_m ) \right \|_\infty = O (h^{4 r}).\\\label{eq:5.28}
\end{eqnarray}
Since
\begin{eqnarray*}\nonumber
&&\left \|\mathcal {K}_m'  (\varphi_m)   (I - Q_n) \mathcal {K}_m'  (\varphi_m)   (I - Q_n) \varphi_m \right \|_{\infty}
\\
 & & \hspace*{1 in} \leq
\left \|\mathcal {K}_m'  (\varphi_m)   (I - Q_n) \mathcal {K}_m'  (\varphi_m)   (I - Q_n) \varphi \right \|_{ \infty} \\
&  & \hspace*{1 in} +  (1 + q)  \; \left \|\mathcal {K}_m'  (\varphi_m)   (I - Q_n) \mathcal {K}_m'  (\varphi_m)  \right \| \| \varphi_m - \varphi  \|_{ \infty}, 
\end{eqnarray*}
by Proposition \ref{prop:5.3}, Proposition \ref{prop:5.4} and the estimate (\ref{eq:2.14}), we obtain
\begin{equation}\label{eq:5.29}
\left \|\mathcal {K}_m'  (\varphi_m)   (I - Q_n) \mathcal {K}_m'  (\varphi_m)   (I - Q_n) \varphi_m \right \|_{\infty} = O (h^{4 r}).
\end{equation}
It then follows that
\begin{eqnarray}\label{eq:5.30}
\left \|\left [ I - {\mathcal{K}}_m'   (\varphi_m) \right]^{-1}  \mathcal{K}_m'   (\varphi_m) \left ( \mathcal{K}_m (\varphi_m) - \tilde{\mathcal{K}}_n^M (\varphi_m) \right )
\right \|_\infty =  O (h^{4 r}).
\end{eqnarray}
By Proposition \ref{prop:5.6},
\begin{eqnarray}\nonumber
&&\left \|\tilde {\mathcal{K}}_n^M (z_n^M) - \tilde{\mathcal{K}}_n^M (\varphi_m)  - \left (  \tilde{\mathcal{K}}_n^M \right )' (\varphi_m) 
(z_n^M - \varphi_m) \right \|_\infty = O \left (\max \left \{ \tilde{h}^{d}, h^{3 r} \right \}^2 \right).
\\\label{eq:5.31}
\end{eqnarray}
whereas by Proposition \ref{prop:5.7},
\begin{eqnarray}\label{eq:5.32}
\left \|\mathcal{K}_m'   (\varphi_m) \left \{
\left (\left (  \tilde{\mathcal{K}}_n^M \right )' (\varphi_m)  -  \mathcal{K}_m'   (\varphi_m) \right  ) (z_n^M - \varphi_m) \right \} \right \|_\infty
= O \left (h^r \max \left \{ \tilde{h}^{d},  h^{3 r} \right \} \right).
\end{eqnarray}
The required result follows from (\ref{eq:5.26})-(\ref{eq:5.32}).
\end{proof}
\setcounter{equation}{0}
\section{Numerical Results}
For the sake of illustration, we quote the following results from Grammont et al \cite{Gram3}.

Consider
\begin{equation}\label{eq:6.1}
 \varphi (s) - \int_0^1 \frac {d s} {s + t + \varphi (t)}   = f (s), \;\;\; 0 \leq s \leq 1,
 \end{equation}
where $f$ is so chosen that
$$ \varphi (t) = \frac {1} { t + c}, \;\;\; c > 0,$$
is a solution of (\ref{eq:6.1}).

We consider $X_n$ to be either  the space of piecewise constant functions or piecewise linear functions with respect
to the following uniform partition of $[0, 1]:$
\begin{equation}\label{eq:6.2}
0 < \frac{1}{n} <\frac{2}{n} < \cdots < \frac{n}{n}=1.
\end{equation}
The projection $Q_n$ is chosen to be the  interpolatory projection at $r $ Gauss points  with $r = 1$ or $ r = 2.$ 
Hence by Theorem  \ref {thm:4.6} and Theorem \ref{thm:5.8}
\begin{eqnarray}\label{eq:6.3}
\|z_n^M - \varphi \|_\infty = O (\max \{\tilde{h}^{d}, h^{3 r} \}), \;\;\; \|\tilde{z}_n^M - \varphi \|_\infty = O (h^r \max \{\tilde{h}^{d}, h^{3 r} \}).
\end{eqnarray}
If $X_n$ is the space of piecewise 
constant functions with respect to the partition (\ref{eq:5.2}), then 
we choose the composite Simpson rule with respect to the partition (\ref{eq:6.2}) to evaluate
the integrals numerically. Then $\tilde {h} = h$ and $d = 4.$ Thus,
\begin{eqnarray}\label{eq:6.4}
\|z_n^M - \varphi \|_\infty = O ( h^{3 } ), \;\;\; \|\tilde{z}_n^M - \varphi \|_\infty =  O ( h^{4} ).
\end{eqnarray}
If $X_n$ is the space of piecewise linear 
functions with respect to the partition (\ref{eq:5.2}), and the interpolation points 
are Gauss 2 points, then we choose the composite Gauss 2 point rule with respect to an uniform
partition with $m = n^2$ subintervals as the approximate quadrature rule. 
Then, $\tilde{h} = h^2$ and $ d = 4.$
Hence
\begin{eqnarray}\label{eq:6.5}
\|z_n^M - \varphi \|_\infty = O ( h^{6 } ), \;\;\; \|\tilde{z}_n^M - \varphi \|_\infty =  O ( h^{8} ).
\end{eqnarray}
In the following tables $\delta_m$ and $\delta_{IM}$ denote the computed orders of convergence in the 
discrete modified projection method and the discrete iterated modified projection method, respectively. 

\begin{center}
Table $6.1$ 
$$\varphi (t) = \frac {1} {t + 1}$$
\begin{tabular} {|c|cc|cc|cc|cc|}\hline
& \multicolumn{4} {|c|} {Piecewise Constant: $r = 1$} & \multicolumn{4} {|c|} {Piecewise Linear: $r = 2$}\\\hline
$n$ & $\| \varphi - z_n^M \|_\infty$ & $\delta_M$ & $\| \varphi  - \tilde{z}_n^M\|_\infty$ & $\delta_{IM}$
& $\| \varphi - z_n^M \|_\infty$ & $\delta_M$ & $\| \varphi  - \tilde{z}_n^M\|_\infty$ & $\delta_{IM}$ \\
\hline
2& $ 8.46 \times 10^{-4}   $ &               & $ 2.38 \times 10^{-5}   $   &    &     $ 5.06 \times 10^{-4}   $            &  & $ 6.47 \times 10^{-5}   $              & \\
4&  $  1.03 \times 10^{-4} $ & $ 3.04 $ & $  1.37 \times 10^{-6} $ & $ 4.12$  &   $ 1.07 \times 10^{-5}   $ & $ 5.56 $ & $ 2.09 \times 10^{-7}   $ & $ 8.27 $\\
8&  $  1.24 \times 10^{-5} $ & $ 3.05 $ & $ 8.18 \times 10^{- 8}  $ &  $ 4.07$ &   $ 1.85 \times 10^{- 7}  $ & $ 5.86 $ & $ 8.45 \times 10^{- 10} $ & $ 7.95 $\\
16& $ 1.45 \times 10^{-6} $ & $ 3.09 $ & $ 4.99 \times 10^{-9}  $ & $ 4.04$  &   $ 3.07 \times 10^{- 9}  $ & $ 5.90 $ & $ 3.35 \times 10^{- 12} $ & $ 7.98 $\\
32& $ 1.59 \times 10^{-7}  $ & $ 3.19 $ & $ 3.08 \times 10^{-10}  $ &  $ 4.02$ &   $ 4.74 \times 10^{- 11}   $ & $ 6.02$ & $ 1.34 \times 10^{-14}$ & $ 7.96 $ \\\hline
\end{tabular}

\end{center}
 \newpage
\begin{center}
Table $6.2:$ $\hspace*{1 cm} \varphi (t) = \frac {1} {t + 0.1}$
\vspace*{0.2 cm}

\begin{tabular} {|c|cc|cc|cc|cc|}\hline
& \multicolumn{4} {|c|} {Piecewise Constant: $r = 1$} & \multicolumn{4} {|c|} {Piecewise Linear: $r = 2$}\\\hline
$n$ & $\| \varphi - z_n^M \|_\infty$ & $\delta_M$ & $\| \varphi  - \tilde{z}_n^M\|_\infty$ & $\delta_{IM}$
& $\| \varphi - z_n^M \|_\infty$ & $\delta_M$ & $\| \varphi  - \tilde{z}_n^M\|_\infty$ & $\delta_{IM}$ \\
\hline
2& $ 3.64 \times 10^{-4}   $ &               & $ 7.80 \times 10^{-6}   $   &    &     $ 9.39 \times 10^{-5}   $            &  & $ 1.14 \times 10^{-4}   $              & \\
4&  $  6.29 \times 10^{-5} $ & $ 2.53 $ & $  4.20 \times 10^{-7} $ & $ 4.21$  &   $ 1.19 \times 10^{-4}   $ & $ -0.35 $ & $ 2.84 \times 10^{-7}   $ & $ 8.65 $\\
8&  $  6.85 \times 10^{-6} $ & $ 2.83 $ & $ 2.42 \times 10^{- 8}  $ &  $ 4.12$ &   $ 3.30 \times 10^{- 6}  $ & $ 5.18 $ & $ 1.10 \times 10^{- 9} $ & $ 8.01 $\\
16& $ 1.12 \times 10^{-6} $ & $ 2.99 $ & $ 1.45 \times 10^{-9}  $ & $ 4.06 $  &   $ 4.99 \times 10^{- 8}  $ & $ 6.05 $ & $ 4.35 \times 10^{- 12} $ & $ 7.99 $\\
32& $ 1.27 \times 10^{-7}  $ & $ 3.14 $ & $ 8.89 \times 10^{-11}  $ &  $ 4.03 $ &   $ 7.00 \times 10^{- 10}   $ & $ 6.16$ & $ 1.78 \times 10^{-14}$ & $ 7.93 $ \\\hline
\end{tabular}

\end{center}

\vspace*{0.2 cm}

It is seen that the computed orders of convergence match well with the expected orders of convergence in (\ref{eq:6.4}) 
and (\ref{eq:6.5}).
\section{Conclusion}
In this paper we consider approximate solutions of Urysohn integral equations with smooth kernels using discrete versions of the Modified Projection Method and of the Iterated Modified Projection Method
associated with an interpolatory projection at $r$ Gauss points. The interval $[a, b]$ is divided into $m$ subintervals each of length 
$\displaystyle {\tilde{h} = \frac {b - a} {m}}$ 
and a composite numerical quadrature with a degree of precision $d$ is chosen to replace all the integrals.  The range of the interpolatory projection at $r$ Gauss points is a space of piecewise polynomials of degree $\leq r - 1$ with respect 
to a uniform partition of $[a, b]$ with $n$ subintervals each of length $\displaystyle {h = \frac {b - a} {n}.}$  The exact solution is denoted by $\varphi$ and the approximate solutions obtained by using the Discrete Modified Projection Method and the Discrete Iterated Modified Projection Method  are denoted respectively by $z_n^M$ and $\tilde{z}_n^M. $ We choose $ m = p \; n$ where $ p \in \N.$ The following orders of convergence are proved:
\begin{eqnarray*}
\|z_n^M - \varphi \|_\infty = O (\max \{\tilde{h}^{d}, h^{3 r} \}), \;\;\; 
\|\tilde{z}_n^M - \varphi \|_\infty = O (h^r \max \{\tilde{h}^{d}, h^{3 r} \}).
\end{eqnarray*}
Since the errors in the  Modified Projection Method and the Iterated Modified Projection Method are respectively of the order of $h^{3 r}$ and $h^{4 r},$
these orders of convergence are preserved if the numerical quadrature formula is so chosen that $\tilde{h}^{d} = h^{3 r}.$ Note that we have at our disposal $\tilde {h},$ that is $m,$ and $d$ to achieve this equality.

\noindent
{\bf Acknowledgement:}
The  first  author would like to thank  Indo-French Centre for Applied Mathematics 
(IFCAM)  for the partial support.

\end{document}